\DeclareMathOperator{\lk}{lk}
\newcommand{\Z}{\mathbb{Z}}
\newcommand{\Q}{\mathbb{Q}}
\newcommand{\N}{\mathbb{N}}
\newcommand{\Vast}{\bBigg@{2.5}} 
\newcommand{\smallbullet}{} 
\DeclareRobustCommand\smallbullet{%
  \mathord{\mathpalette\smallbullet@{0.7}}
}
\newcommand{\smallbullet@}[2]{%
  \vcenter{\hbox{\scalebox{#2}{$\m@th#1\bullet$}}}%
}
\newtheoremstyle{thm}{}{}{\itshape}{}{\bfseries}{}{ }{} 
\newtheoremstyle{definition}{}{}{}{}{\bfseries}{}{ }{} 
\theoremstyle{thm}
\newtheorem{Theorem}{Theorem}[section]
\newtheorem{thm}[Theorem]{Theorem}
\newtheorem{lem}[Theorem]{Lemma}
\newtheorem{prop}[Theorem]{Proposition}
\newtheorem{cor}[Theorem]{Corollary}
\newtheorem*{Theorem-ohne}{Theorem}
\newtheorem{ques}[Theorem]{Question}
\newtheorem*{lemma:hyp_reduction}{Lemma~\ref{lemma:hyp_reduction}}
\theoremstyle{definition}
\newtheorem{defi}[Theorem]{Definition}
\newtheorem{rem}[Theorem]{Remark}
\newtheorem{ex}[Theorem]{Example}
\definecolor{amaranth}{rgb}{0.9, 0.17, 0.31} 
\definecolor{carrotorange}{rgb}{0.93, 0.57, 0.13} 
\definecolor{citrine}{rgb}{0.89, 0.82, 0.04} 
\definecolor{dartmouthgreen}{rgb}{0.05, 0.5, 0.06} 
\definecolor{ballblue}{rgb}{0.13, 0.67, 0.8} 
\definecolor{ceruleanblue}{rgb}{0.16, 0.32, 0.75} 
\definecolor{amethyst}{rgb}{0.6, 0.4, 0.8} 
\definecolor{amber}{rgb}{1.0, 0.75, 0.0} 
\definecolor{burlywood}{rgb}{0.87, 0.72, 0.53} 
\numberwithin{equation}{section}
\begin{document}

\title{Unique Surgery Descriptions along Knots} 

\author{Marc Kegel}
\address{Humboldt Universit\"at zu Berlin, Rudower Chaussee 25, 12489 Berlin, Germany}
\email{kegelmarc87@gmail.com}

\author{Misha Schmalian}
\address{University of Oxford, Andrew Wiles Building, OX2 6GG, UK}
\email{schmalian@maths.ox.ac.uk}



\begin{abstract}
We prove that for any non-trivial knot $K$, infinitely many $r$-surger\-ies $K(r)$ along $K$ have a unique surgery description along a knot. Moreover, we show that for any hyperbolic L-space knot $K$, and infinitely many integer slopes $n$, the manifold $K(n)$ has a unique surgery description. Here we say a 3-manifold M has a {\it unique surgery description along a knot} in $S^3$ if there is a unique pair $(K,r)$ of a knot $K$ and a slope $r$ such that $M$ is orientation-preservingly diffeomorphic to $K(r)$. This generalises the notion of characterising slopes. Conversely, we provide new families of manifolds with several distinct surgery descriptions along knots. More precisely, we construct for every non-zero integer $m$ a knot $K_m$ such that for any integer $n$, the manifold $K_m(m+1/n)$ can also be obtained by surgery on another knot.
\end{abstract}

\keywords{Dehn surgery, characterising slopes, unique surgery descriptions} 

\makeatletter
\@namedef{subjclassname@2020}{%
  \textup{2020} Mathematics Subject Classification}
\makeatother

\subjclass[2020]{57R65; 57K10, 57K32} 


\maketitle

\section{Introduction}
Given a link $L$ in $S^3$ a key concept in low-dimensional topology is the process of Dehn surgery along $L$. 
Lickorish and Wallace showed that any oriented closed 3-manifold may be obtained by Dehn surgery along a link \cite{Lickorish, Wallace}. However, it was quickly observed that such a surgery description, even when restricting to knots instead of links, does not have to be unique \cite{Lickorish_sharing_surgery}. In this article, we discuss when surgery descriptions are unique.\\

We say an oriented 3-manifold $M$ has a {\it unique surgery description along a knot} if there exists a unique pair of the isotopy class of a knot $K$ in $S^3$ and a slope $r\in \mathbb Q\cup\{\infty\}$ such that $M$ is orientation-preservingly diffeomorphic to the $r$-surgery $K(r)$ along $K$. To the authors' knowledge, the following is a list of the manifolds known to have unique surgery descriptions. Gabai proved Property R, which implies that $S^1\times S^2$ has a unique surgery description as the $0$-surgery along the unknot \cite{Gabai_Property_R}. Ghiggini proved that the Poincar\'e homology sphere $\Sigma(2,3,5)$ has a unique surgery description as the $1$-surgery along the right-handed trefoil \cite{Ghighini_Poincare_Sphere}. Finally, Baldwin and Sivek showed that for infinitely many knots $K$ in $S^3$ the $0$-surgery $K(0)$ has a unique surgery description \cite{Baldwin_Sivek_0_Surgery}. In particular, the Poincar\'e homology sphere was the only rational homology sphere known to have a unique surgery description along a knot. Our main result shows that there are infinitely many such manifolds as follows.

\begin{thm} \label{thm:main_theorem}
 For any non-trivial knot $K$ in $S^3$ there are infinitely many rational slopes $r\in \mathbb Q\cup\{\infty\}$ such that $K(r)$ has a unique surgery description. 
\end{thm}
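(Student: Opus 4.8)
The plan is to use geometrization to reduce to a hyperbolic knot, and then to exploit that for slopes of large complexity the surgery dual knot is a very short geodesic in $K(r)$, whose uniqueness pins down the entire surgery description. By Lemma~\ref{lemma:hyp_reduction} (together with a separate treatment of torus knots, whose surgeries are Seifert fibred or reducible by Moser's classification) it suffices to prove the theorem when $K$ is hyperbolic; write $V_0=\vol(S^3\setminus K)$. By Thurston's hyperbolic Dehn surgery theorem there is a finite exceptional set of slopes outside of which $K(r)$ is hyperbolic with $\vol(K(r))<V_0$, the core $\kappa_r$ of the surgery torus is isotopic to a closed geodesic with $\vol(K(r))\to V_0$ and $\operatorname{length}(\kappa_r)\to 0$ as $r\to\infty$, and $K(r)\setminus\kappa_r\cong S^3\setminus K$. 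Fix $p$ (for instance $p=1$, so that $K(r)$ is an integral homology sphere) and restrict to the infinite family of rational slopes $r=p/q$. We choose $q$ large enough that $M:=K(p/q)$ is hyperbolic and that $\kappa:=\kappa_{p/q}$ is strictly shorter than every other closed geodesic of $M$; this is possible because, by geometric convergence of $K(p/q)$ to $S^3\setminus K$, the only new short geodesic created by a long filling is the core. We then show that for infinitely many such $q$ the manifold $M$ has a unique surgery description.

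So suppose $M\cong K'(r')$ by an orientation-preserving diffeomorphism. Comparing first homology gives $|p'|=p$, hence $r'=p'/q'$ with $p'=\pm p$. The whole argument reduces to the claim that the surgery dual of any such description is isotopic in $M$ to $\kappa$ --- equivalently, that $M$ has a unique \emph{core knot}, i.e.\ a unique isotopy class of knot $J\subset M$ whose exterior admits an $S^3$ Dehn filling ($\kappa$ being one such, by construction). Granting this, identifying $M\setminus\kappa'$ with $M\setminus\kappa$ yields $S^3\setminus K'\cong S^3\setminus K$, hence $K'=K$ by the knot complement theorem of Gordon--Luecke; and then, since in the knot exterior $M\setminus\kappa$ the meridian (the unique filling slope producing $S^3$, again by Gordon--Luecke) and the preferred longitude (the unique null-homologous slope) are intrinsic, the $K$-description and the $K'$-description fill $M\setminus\kappa$ along the same curve expressed in the same basis, so $p/q=p'/q'$. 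Thus $(K',r')=(K,p/q)$; note that no appeal to the cosmetic surgery conjecture is needed.

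It remains to prove that $M$ has a unique core knot. Let $J$ be a core knot, corresponding to $M=K'(r')$, so $M\setminus J\cong S^3\setminus K'$. Using that $M$ is hyperbolic one first disposes of the cases where $K'$ is a torus knot (impossible, as $M$ would be Seifert fibred) and where $K'$ is a satellite (the companion torus, being incompressible in $S^3\setminus K'$, must compress in $M$, so the solid-torus side is $S^3$-surgery on the companion and the description factors through a simpler companion surgery; a winding-number analysis of which surgeries on a satellite factor this way, together with the rigidity below applied to the companion, excludes this). Hence we may assume $J$ is isotopic to a closed geodesic $\gamma_J$ in $M$; if $\gamma_J=\kappa$ we are done. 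Otherwise $\operatorname{length}(\gamma_J)$ is bounded below by a positive constant depending only on $K$, so $\gamma_J$ lies in a fixed-thickness thick part of $M$ which, for $q$ large, is bilipschitz close to the corresponding thick part of $S^3\setminus K$. Drilling $J$ gives $M\setminus J\cong S^3\setminus K'$, and drilling $\kappa$ as well gives the two-cusped manifold $M\setminus(\kappa\cup J)\cong (S^3\setminus K)\setminus J$. The plan is now to confine $\vol\bigl(M\setminus(\kappa\cup J)\bigr)$ to a fixed interval above $V_0$, using $\vol(M)\to V_0$ together with the filling and drilling estimates (Thurston; Neumann--Zagier; Futer--Kalfagianni--Purcell; Brock--Bromberg), to invoke J\o rgensen--Thurston finiteness so that only finitely many links $K\cup J$ can arise, and finally to check directly that for all but finitely many $q$ in our family none of those links yields an $S^3$-filling of $M\setminus J$ other than along the meridian of $\kappa$.

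The main obstacle is exactly this last step: controlling \emph{all} core knots of $M$, and in particular ruling out those whose geodesic representative is not short, where the filling estimates degrade and one cannot simply transport the geometry of $S^3\setminus K'$ to that of $M$; on the reductive side, the matching difficulty is carrying through Lemma~\ref{lemma:hyp_reduction} in the satellite case so that companion surgeries do not produce genuinely new ``unwrapped'' descriptions. Everything else --- the homological bookkeeping, Gordon--Luecke, the intrinsic characterisation of the filling slopes, and the torus-knot cases via Moser --- is routine.
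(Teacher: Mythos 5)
Your overall architecture (reduce to the hyperbolic case, show the surgery dual is the unique shortest geodesic for long fillings, then apply Gordon--Luecke and the intrinsic meridian/longitude to recover the slope) matches the skeleton of the paper's argument for hyperbolic knots, and the step you flag as the main obstacle is indeed a genuine gap: nothing in your proposal rules out a competing description $M\cong K'(p/q')$ with $|q'|$ \emph{small}, where the dual knot need not be a short geodesic and the drilling/filling estimates give no control. Your fallback (volume pinching plus J{\o}rgensen--Thurston finiteness) does not close this, and your concrete choice $p=1$ makes the situation worse: for any unknotting-number-one knot (e.g.\ the figure-eight knot), \emph{every} slope $1/n$ with $n\neq 0$ admits a second description $J_n(-1)$ with $|q'|=1$ (Corollary~\ref{cor:unknotting_precise}, Example~\ref{ex:trefoil_fig8}), so the family of slopes you restrict to can consist entirely of non-strongly-characterising slopes. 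The missing idea is arithmetic, not geometric: by Proposition~\ref{prop:linking_framing}, a diffeomorphism $K(p/q)\cong K'(p/q')$ forces $qq'$ to be a quadratic residue mod $p$, so one chooses $p$ a large prime with $p\equiv 1\pmod 8$ such that (via quadratic reciprocity and Dirichlet) every integer up to the Lackenby constant $C(K)$ is a residue mod $p$ while $q$ is a squarefree non-residue; this forces $|q'|>C(K)$, and only then does the hyperbolic rigidity you describe (packaged in the paper as Theorem~\ref{thm:lackenby_char_slope}) apply to conclude $K=K'$.

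Two further points. First, your reduction to the hyperbolic case is thinner than what is actually needed: Lemma~\ref{lemma:hyp_reduction} only replaces a non-hyperbolic $K'$ whose surgery happens to be hyperbolic, at the cost of multiplying $q'$ by a square $w^2$ (which the residue argument must also absorb); the general theorem for non-hyperbolic $K$ requires the full JSJ analysis of Section~\ref{section:general_knots} (Budney's classification of the pieces, Sorya's control of the surgered piece, and a case analysis for torus-knot, composing-space, cable, and hyperbolic outermost pieces), not just Moser's classification for torus knots and a ``winding-number analysis'' for satellites. Second, your remark that no appeal to cosmetic surgery results is needed is correct for the final slope identification once the cores are matched, but the real content of the theorem lives entirely in the step you left open.
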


The proof of this result is fairly accessible, relying only on elementary algebraic topology, quadratic reciprocity, hyperbolic geometry, and the JSJ-decomposition of knots. In particular, the proof in the case of hyperbolic knots in Section \ref{section:hyperbolic_knots} can be read independently from the remaining paper and contains the novel ideas of the proof. For the remaining article, we will use the following terminology.

\begin{defi} For a knot $K$ in $S^3$, a slope $r\in \mathbb Q\cup\{\infty\}$ is called {\it strongly-characterising} if, whenever $K(r)$ is orientation-preservingly diffeomorphic to the $r'$-surgery $K'(r')$ along some knot $K'$, then $K'$ is isotopic to $K$.
\end{defi}

It has been shown that if $K(r)$ and $K(r')$ are orientation-preservingly diffeomorphic for a non-trivial knot $K$ and $r\neq r'$, then $r=-r'=\pm 2$ \cite{Cosmetic_Daemi_Lidman_Eismeier}. Hence, Theorem \ref{thm:main_theorem} is equivalent to every non-trivial knot $K$ having infinitely many strongly-characterising slopes. \\ 

In comparison, we say that a slope $r\in \mathbb Q\cup \{\infty\}$ is {\it characterising} for a knot $K$ if whenever $K(r)$ is orientation-preservingly diffeomorphic to the $r$-surgery $K'(r)$, then $K'$ is isotopic to $K$. As the name suggests, a strongly-characterising slope is in particular characterising. 
Especially in recent years, there has been extensive work on characterising slopes.
  Every non-trivial slope is known to be characterising for the unknot, the trefoil knots, and the figure-eight knot \cite{KMOS_unknot_char}, \cite{trefoil_eight_char}. Every knot is known to have infinitely many characterising slopes \cite{Lackenby_char} and, in fact, for any knot $K$ the slope $p/q$ is known to be characterising for sufficiently large $|q|$ \cite{Lackenby_char, McCoy_char_torus, Sorya_satellite_char}. On the other hand, Baker and Motegi construct knots for which all integer slopes are non-characterising \cite{Baker_Motegi_non_char}. Moreover, it is known that every non-trivial slope is non-characterising for some knot \cite{Wakelin_Picirillo_Hayden_any_slope}. Additional work on characterising and non-characterising slopes appears for example in \cite{Lickorish_sharing_surgery,Akbulut,Brakes_non_char,Livingston_surgery,Motegi_surgery,Gordon_Luecke,Osoinach,Teragaito,AJLO2,Ni_Zhang_Torus_Char,AJLO,Yasui,MillerPiccirillo,Piccirillo,McCoy_char_hyp,Abe_Tagami,Manolescu_Piccirillo,Wakelin_char,Wakelin_Sorya_effective_char,Baldwin_Sivek_char_5_2,Tagami_RGB,Stein,Baldwin_Sivek_traces,KP,McCoy_char_Floer,traces}.\\

In Section \ref{sec:infinite}, we discuss explicit examples of non-strongly-characterising slopes that are characterising. For example, by considering cables of knots, we see that there are infinitely many slopes that are not strongly-characterising for any knot $K$~\cite{Motegi_surgery}. For an exact statement, see Proposition \ref{thm:cable_slopes}. Moreover, we construct the following family of examples. 

\begin{thm}
    For every $m\in \mathbb Z\setminus\{0\}$ there exists a knot $K_m$ in $S^3$ such that for each $n\in \mathbb Z\setminus\{0\}$ the slope $m+\frac 1n$ is not strongly-characterising for $K_m$. 
\end{thm}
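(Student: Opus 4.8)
The plan is to construct $K_m$ as an explicit knot whose surgeries along the family of slopes $m+1/n$ admit a second surgery description coming from a symmetry or a move on a link. The natural tool is the annulus twist / Osoinach-type construction, or equivalently a ``Brakes move'': one exhibits $K_m$ as a curve lying on a surface (an annulus or once-punctured torus) properly embedded in a manifold with a nice symmetry, so that re-gluing produces a diffeomorphic manifold realized as surgery on a different knot $K_m'$. Concretely, I would look for a knot $K_m$ that is a band sum or a twisted satellite such that $(m+1/n)$-surgery on $K_m$ is obtained by a sequence of handle slides from $1/n$-surgery on a fixed auxiliary knot (capturing the $m$) together with $0$-surgery on an auxiliary unknot $c$; then sliding over $c$ in the other direction, or using an isotopy of $c$ that drags $K_m$ to a genuinely different knot $K_m'$, yields the alternative description with the same slope $m+1/n$.

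The key steps, in order, would be: (1) Fix a two-component link $L_m = K_m \cup c$ in $S^3$ where $c$ is an unknot, chosen so that $0$-surgery on $c$ turns $S^3$ into $S^1\times S^2$ (or $S^3$ again after a further slide) and so that there is a diffeomorphism $\phi$ of the surgered manifold not isotopic to one fixing $c$ setwise with its framing. (2) Show that surgery on $K_m$ with coefficient $m + 1/n$, inside $S^3$, is equivalent --- via a slam-dunk / Rolfsen twist along $c$ --- to the pair $(K_m$ with coefficient $1/n$, $c$ with coefficient $m)$ or to $(K_m$ with coefficient $1/n$, $c$ with coefficient $0)$ after absorbing $m$ into the clasp pattern; the precise bookkeeping here is a Kirby-calculus computation. (3) Apply the ambient diffeomorphism (annulus twist), which fixes the surgered-$c$ piece but carries $K_m$ to a new knot $K_m'$, and slam-dunk $c$ back in to recover a genuine surgery on $K_m'$ in $S^3$ with the same coefficient $m+1/n$. (4) Verify that $K_m'$ is not isotopic to $K_m$ for infinitely many --- in fact all nonzero --- values of $n$; a convenient invariant is the Alexander polynomial, genus, or the hyperbolic volume of the complements, which one arranges to differ by making the twisting region nontrivial. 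By design the construction is uniform in $n$ because the move along $c$ is insensitive to the $1/n$ part of the coefficient.

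I expect the main obstacle to be step (4): ensuring that the two knots $K_m$ and $K_m'$ are genuinely non-isotopic \emph{for every} $n \ne 0$, rather than just generically. The annulus-twist construction naturally produces an infinite family of surgery descriptions, but a priori one must rule out that $K_m' \simeq K_m$, or that $K_m'$ coincides with $K_m$ up to the orientation-reversal and mirroring ambiguities that a ``$(m+1/n)$-surgery yields the same oriented manifold'' statement would permit. I would handle this by choosing the twisting pattern inside the annulus to be a sufficiently complicated tangle (e.g. containing a hyperbolic piece of large volume, or a prime summand with distinguished Alexander polynomial) so that $K_m$ and $K_m'$ are distinguished by an invariant that is manifestly unchanged under the allowed ambiguities; checking this is a finite verification once the model link $L_m$ is fixed. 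A secondary technical point is confirming that $m+1/n$ is nonetheless a \emph{characterising} slope in these examples, as promised in the surrounding text --- this would follow from the large-$|q|$ characterising-slope results of Lackenby--McCoy--Sorya cited above, since the $1/n$ part makes the denominator $|n|$ large, but one must check the hypotheses of those theorems apply to $K_m$.
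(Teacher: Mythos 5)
Your overall skeleton --- realise $K_m$ together with an auxiliary unknot $c$ as a two-component link, trade the surgery coefficient back and forth by Rolfsen twists / slam dunks, and then distinguish the resulting knots by Alexander polynomials or volumes --- is close to what the paper does. But there is a genuine error at the heart of steps (2)--(3): you insist that the alternative surgery description of $K_m(m+\frac 1n)$ be a surgery on a different knot $K_m'$ \emph{with the same coefficient} $m+\frac 1n$. That cannot work for $|n|$ large, for exactly the reason you flag in your final paragraph: by the characterising-slope results of Lackenby and Sorya, slopes $p/q$ with $|q|$ sufficiently large are characterising, so for large $|n|$ there is \emph{no} knot $K_m'\neq K_m$ with $K_m'(m+\frac 1n)\cong K_m(m+\frac 1n)$. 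Being non-strongly-characterising only requires some other pair $(K',r')$, and the slope $r'$ must be allowed to differ from $m+\frac 1n$. Relatedly, the annulus-twist/Osoinach machinery you invoke preserves one fixed \emph{integral} framing determined by the annulus, so it is not adapted to producing a whole family of non-integral slopes $m+\frac 1n$ at once.

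The paper's construction repairs this by being symmetric in the two link components rather than applying an ambient diffeomorphism. Take $L=L_1\cup L_2$ a link of two unknots with $\lk(L_1,L_2)=1$ (the paper uses $L9a20$), and set $K_m=L(*,\frac 1m)$ and $J_n=L(\frac 1n,*)$. Both fillings of the remaining component give $L(\frac 1n,\frac 1m)$, so $K_m(-m+\frac 1n)\cong J_n(-n+\frac 1m)$: the second description is along a knot $J_n$ that \emph{depends on $n$} and carries the slope $-n+\frac 1m$, whose denominator is $|m|$ rather than $|n|$ --- consistent with the characterising-slope theorems. Your step (4) is then handled uniformly by the Torres/Baker--Motegi specialisation $\Delta_{K_m}(t)\,\dot{=}\,\Delta_L(t,t^m)$ and $\Delta_{J_n}(t)\,\dot{=}\,\Delta_L(t^n,t)$, which distinguishes $K_m$ from $J_n$ for all but finitely many pairs, with the leftover cases settled by hyperbolic volume. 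So your distinguishing step is sound in spirit, but the construction must output a different knot with a different slope, not the same slope.
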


The above theorem is a weaker version of Theorem \ref{thm:all_slopes_precise} and should be seen in contrast to the aforementioned result that for any knot $K$ the slopes $p/q$ with $|q|$ sufficiently large are characterising. \\

Finally, exceptional behavior is observed more often for integral Dehn surgery. In our setup this is, for example, exemplified by all integer slopes being non-characterising in the examples of Baker and Motegi \cite{Baker_Motegi_non_char}. Moreover, Sorya shows that non-integer slopes are characterising for composite knots \cite{Sorya_satellite_char}. In fact, the strongly-characterising slopes $p/q$ we find in Theorem \ref{thm:main_theorem} will have large $|q|$. On the other hand, McCoy shows that for a hyperbolic L-space knot, all sufficiently large integer slopes are characterising \cite{McCoy_char_hyp}. We prove a similar statement for strongly-characterising slopes.

\begin{thm} \label{thm:integer_L_space} For any hyperbolic L-space knot, infinitely many integer slopes are strongly-characterising. 
\end{thm}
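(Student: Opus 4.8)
The plan is to combine McCoy's result that sufficiently large integer slopes are characterising for a hyperbolic L-space knot~\cite{McCoy_char_hyp} with the techniques developed in Section~\ref{section:hyperbolic_knots} to upgrade ``characterising'' to ``strongly-characterising'' along a sparse but infinite set of slopes. Let $K$ be a hyperbolic L-space knot. By~\cite{McCoy_char_hyp} there is an $N_0$ such that every integer $n \geq N_0$ is a characterising slope for $K$; thus it suffices to rule out the possibility that $K(n) \cong K'(r')$ orientation-preservingly with $r' \neq n$, for infinitely many integers $n \geq N_0$. The first step is to restrict the shape of such a hypothetical $(K', r')$: since $K(n)$ is an L-space (large surgeries on L-space knots are L-spaces) with $H_1 = \Z/n$, the slope $r'$ must be a rational number $p'/q'$ with $p' = \pm n$, and an L-space-knot/surgery-obstruction argument (together with the fact that L-spaces have no incompressible tori, so $K(n)$ is either hyperbolic, a small Seifert fibered space, or a lens space — and by geometrization the hyperbolic volume is a diffeomorphism invariant) forces $K'$ itself to be either a hyperbolic L-space knot or a torus knot, and $q'$ to be bounded by a constant depending only on the volume of $K(n)$.

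The second step is the arithmetic heart, parallel to the hyperbolic case already treated in Section~\ref{section:hyperbolic_knots}. For $n$ in an appropriate infinite set one shows that the only way to have $K(n) \cong K'(p'/q')$ with $q' \neq \pm 1$ is obstructed by a linking-form / quadratic-reciprocity computation: the linking form of $p'/q'$-surgery on any knot is determined by $p'$ and $q'$ (it is $\langle -q'/p' \rangle$ up to the appropriate normalization), while the linking form of $n$-surgery on $K$ is $\langle -1/n \rangle$; requiring these to be isometric over $\Z/n$ forces $q'$ to be a square modulo $n$ (in a suitable sense), and by Dirichlet one can choose infinitely many $n$ — e.g. primes in a fixed residue class, or $n = $ suitable prime powers — for which no $q'$ in the bounded range $1 < |q'| \leq Q(\vol K(n))$ is a quadratic residue. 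Here one must be careful that the volume bound $Q$ grows slowly enough (it is logarithmic in $n$ at worst, by Thurston's hyperbolic Dehn surgery estimates) that the density of ``bad'' residues does not swamp the available primes; this is exactly the kind of estimate carried out in the hyperbolic-knot section, so I would invoke the relevant lemma there almost verbatim.

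The third step handles the leftover case $q' = \pm 1$, i.e.\ $K(n) \cong K'(n)$ or $K(n) \cong K'(-n)$ as the only remaining integral possibility. The orientation-preserving condition rules out $K'(-n)$ via the linking form (sign of $n$), and $K(n) \cong K'(n)$ is excluded precisely because $n \geq N_0$ is a characterising slope for $K$, forcing $K' = K$. Assembling these, for every $n$ in the infinite arithmetic set from step two (intersected with $[N_0, \infty)$) the slope $n$ is strongly-characterising, which proves the theorem.

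I expect the main obstacle to be step two: making the quadratic-reciprocity / linking-form obstruction interact cleanly with the volume bound on $q'$. One needs a family of moduli $n$ (with $H_1(K(n)) = \Z/n$) that simultaneously (a) are $\geq N_0$, (b) have the property that $K(n)$ has no non-obvious surgery description with $|q'|>1$ up to the volume-determined bound, and (c) form an infinite set. The cleanest route is probably to take $n$ ranging over primes $\equiv 3 \pmod 4$ (so that $-1$ is a non-residue and the linking form rigidity bites hardest), and to show the number of primes up to $x$ in this class that fail condition (b) is $o(\pi(x))$ because the bound $Q$ is $O(\log x)$; but verifying that the Dehn-surgery volume estimate gives a genuinely logarithmic (not merely sublinear) bound on $q'$, uniformly over hyperbolic L-space knots $K$, is the delicate point and may require citing a precise effective version of Thurston's theorem.
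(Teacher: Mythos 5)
Your overall architecture (McCoy's characterising result, a Lackenby-type bound on the auxiliary denominator $q'$, arithmetic to force $q'=\pm 1$, then a separate argument for $\pm 1$) does parallel the paper, which deduces the theorem from Proposition~\ref{prop_integer_1}. But your step two, the ``arithmetic heart,'' cannot work as stated. From Proposition~\ref{prop:linking_framing} with $q=1$, the obstruction is that $q'$ must be a quadratic residue mod $n$, so to exclude every $q'$ with $1<|q'|\leq Q$ you would need \emph{all} of these integers to be non-residues mod $n$ simultaneously. This is impossible once $Q\geq 4$: any perfect square in that range (e.g.\ $q'=4$) is automatically a residue, and more generally the product of two non-residues is a residue, so $2$ and $3$ cannot both be non-residues without $6$ being one. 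No choice of primes $n$ in an arithmetic progression rescues this. The paper avoids the problem entirely by a divisibility trick rather than a residue trick: it takes the surgery coefficient $p$ divisible by every integer up to the Lackenby constant $C(K)$, so that $\gcd(p,q')=1$ alone forces $|q'|=1$ once Theorem~\ref{thm:lackenby_char_slope} and Lemma~\ref{lemma:hyp_reduction} have bounded $|q'|<C(K)$. Note also that the bound on $|q'|$ is Lackenby's constant $C(K)$, depending only on $K$; your worry about a volume bound growing logarithmically in $n$ is a red herring, since $\vol K(n)$ is uniformly bounded by $\vol(S^3\setminus K)$ and in any case plays no role.

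Your step three is also incomplete for the paper's choice of slopes. Handling $q'=-1$ via the linking form requires $-1$ to be a non-residue mod $n$, which is fine for primes $n\equiv 3\pmod 4$ but those $n$ are incompatible with the divisibility trick needed to fix step two. The paper instead disposes of $K(p)\cong K'(-p)$ with the Casson--Walker surgery formula (Theorem~\ref{thm:lambda_surgery_formula} and Proposition~\ref{prop:lambda_lens}), obtaining $6\bigl(\Delta''_K(1)+\Delta''_{K'}(1)\bigr)=p^2-3p+2$, which is impossible for $3\mid p$. This invariant-theoretic ingredient is absent from your proposal and is what lets the argument close. Finally, your claim that $K(n)$ being an L-space forces it to be hyperbolic, small Seifert fibred, or a lens space (``L-spaces have no incompressible tori'') is false in general and is not needed: Lemma~\ref{lemma:hyp_reduction} handles non-hyperbolic $K'$ directly.
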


We conclude the introduction with some natural questions arising from this article. First, we would like to have a complete classification of strongly-characterising slopes for simple knot types. Note that the resolution of the Berge realization problem~\cite{Greene,Berge} yields such a classification for the unknot. Indeed, a slope $(-p/q)$ is strongly-characterising if and only if the lens space $L(p,q)$ cannot be obtained by surgery on a non-trivial Berge knot. 

\begin{ques}\label{ques:classification}
    What is the classification of strongly-characterising slopes for the trefoil knots and the figure-eight knot?
\end{ques}

Via the cabling construction, one can observe that every knot has infinitely many non-strongly-characterising slopes, see Proposition \ref{thm:cable_slopes}. So a simpler version of the classification question would be to ask if there exist a knot $K$ for which all non-strongly-characterising slopes come from a cabling construction. Note that by Theorem~\ref{thm:untwisting_precise} and Corollary~\ref{cor:unknotting_precise} such a knot $K$ necessarily needs to have unknotting and untwisting number greater than one.

\begin{ques}
    Are there knots $K$ such that, whenever the $r$-surgery $K(r)$ is orienta\-tion-preservingly diffeomorphic to $K'(r')$, then $K$ and $K'$ are isotopic or $K'$ is a cable of $K$?
\end{ques}

Next, we would like to better understand which integer slopes are strongly-characterising.

\begin{ques}
    Do there exist non-trivial knots for which all integer slopes are strongly-characterising?
\end{ques}

\begin{ques}
    Do there exist knots for which every integer slope is characterizing but not strongly-characterising?
\end{ques}

Finally, we would like to raise the algorithmic question of deciding whether a given manifold has a unique surgery description along a single knot. 

\begin{ques}
    Is it algorithmically decidable whether a given slope $r$ of a given knot $K$ is (strongly-)characterising?
\end{ques}

\subsection*{Conventions}
All $3$-manifolds and knots are assumed to be smooth. We consider knots up to isotopy and $3$-manifolds up to orientation-preserving diffeomorphism. For $p/q\in \mathbb Q\cup \{\infty\}$ we denote by $K(p/q)$ the Dehn filling of $S^3\setminus N(K)$ along the slope $p\cdot \mu+q\cdot \lambda$ where $\mu, \lambda\in H_1(\partial N(K))$ are the meridian and Seifert longitude oriented by the standard right-hand convention. When specifying a slope $p/q\in \mathbb Q\cup \{\infty\}$, we always take $p\geq 0$. We will mostly consider unoriented knots and links. However, at certain points (for example to define linking numbers), we will fix orientations for links. It will be straightfoward to see that these choices of orientations will not affect the main results. By the Gordon-Luecke knot complement theorem~\cite{Gordon_Luecke}, the isotopy class of a knot $K$ in $S^3$ is uniquely determined by its exterior $S^3\setminus N(K)$ or, equivalently, its complement $S^3\setminus K$~\cite{Edwards}. We implicitly use this deep theorem in several places throughout the article. 

\subsection*{Acknowledgments}
Both authors would like to thank Chun-Sheng Hsueh, Marc Lackenby, Naageswaran Manikandan, and Steven Sivek for useful discussions and insightful input. Moreover, both authors would like to thank Steven Sivek for communicating Proposition \ref{Sivek_Lemma}.\\
This project began at the \textit{Berlin-Brandenburg Workshop V: Knot Theory and its Application} when the second author visited the Humboldt University Berlin. Both the workshop and the visit were funded by the \textit{Berlin Mathematics Research Center BMS/MATH+} and the \textit{Kolleg Mathematik Physik Berlin (KMPB)}.
The first author is funded by the DFG, German Research Foundation (Project: 561898308).

\section{Constructions of non-strongly-characterising slopes} \label{sec:infinite}

In this section, we will highlight some methods for constructing non-strongly-characterising slopes.  

\subsection{Cable slopes}
First, we observe that there exist infinitely many slopes that are non-strongly-characterising for any knot. This follows from Gordon's results on surgeries on cables~\cite{Gordon_satellite} and was already discussed by Motegi in~\cite{Motegi_surgery}. 

\begin{defi}
The satellite knot with companion $K$ and pattern the torus knot $T_{r,s}\subset S^1\times D^2$ is called the $(r,s)$-{\it cable of $K$} and is denoted by $C_{r,s}(K)$. Here we chose the convention that the pattern $T_{r,s}\subset S^1\times D^2$ is isotopic to the boundary slope $r\cdot(\{1\}\times \partial D^2)+s\cdot (S^1\times\{1\})$. 
\end{defi}    

Note that $C_{r,s}(K)$ is a knot if and only if $r,s$ are coprime. Moreover, if $s=\pm 1$ then $C_{r,s}(K)=K$. Finally $C_{r,s}(K)=C_{-r,-s}(K)$. Hence, we adopt the convention that $s>1$ and that $r,s$ are coprime. For certain slopes, Dehn fillings of a knot $K$ and its cable $C_{r,s}(K)$ can agree. Concretely, we have the following lemma.

\begin{lem} [Corollary 7.3 in~\cite{Gordon_satellite}]
\label{lemma:cable_filling}
If $|qrs-p|=1$, then for any knot $K$, we have $C_{r,s}(K)(p/q)\cong  K(p/(qs^2))$. \qed
\end{lem}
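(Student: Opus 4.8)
The plan is to realise $C_{r,s}(K)(p/q)$ directly as a Dehn filling of the exterior $X(K):=S^3\setminus N(K)$. Write $V=N(K)\cong S^1\times D^2$ and recall that $C_{r,s}(K)$ is the torus knot $T_{r,s}$ lying on a torus concentric with $T:=\partial V$, so that the exterior of $C_{r,s}(K)$ decomposes along $T$ as $X(K)\cup_T C$, where $C:=V\setminus N(T_{r,s})$ is the cable space. Performing $p/q$-surgery on $C_{r,s}(K)$ then amounts to gluing $X(K)$ to the manifold $W:=C(p/q)$ obtained by $p/q$-filling the boundary torus $\partial N(T_{r,s})$ of $C$ while leaving $T$ free. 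So it suffices to prove that, when $|qrs-p|=1$, the manifold $W$ is a solid torus whose meridian, read as a curve on $T=\partial N(K)$, is the slope $p/(qs^2)$; then $C_{r,s}(K)(p/q)=X(K)\cup_T W=K(p/(qs^2))$.

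\emph{Step 1: $W$ is a solid torus.} The cable space $C$ is Seifert fibered over the annulus with a single exceptional fibre of order $s$. On $\partial N(T_{r,s})$ the regular fibre is the cabling slope; a linking-number computation (using $\lk(\mu_K,C_{r,s}(K))=s$ and $\lk(\lambda_K,C_{r,s}(K))=0$) identifies this slope as $rs\,\mu'+\lambda'$, where $\mu',\lambda'$ denote the meridian and Seifert longitude of $C_{r,s}(K)$. Hence the filling slope $p\,\mu'+q\,\lambda'$ meets the fibre in exactly $|p\cdot 1-q\cdot rs|=|p-qrs|=1$ point, and filling a Seifert fibered space over the annulus with one exceptional fibre along a slope intersecting the fibre once yields a Seifert fibered space over the disk with one exceptional fibre, i.e.\ a solid torus, the exceptional fibre becoming (up to isotopy) its core. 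This is the local model underlying Moser's classification of torus-knot surgeries, now carried out inside $V$.

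\emph{Step 2: the meridian of $W$.} Let $\mu_K,\lambda_K$ be the meridian and Seifert longitude of $K$. Here $H_1(C)$ is free abelian of rank two with $\{\lambda_K,\mu'\}$ a basis, and the winding number $s$ of the pattern produces the two identities $\mu_K=s\mu'$ (a meridian disk of $V$ meets $T_{r,s}$ in $s$ points) and $\lambda'=s\lambda_K$ (a priori $\lambda'=s\lambda_K+m\mu'$; but $\lambda'$ is null-homologous in $S^3\setminus N(C_{r,s}(K))\supset C$, where $\lambda_K$ already vanishes and $\mu'$ generates $H_1$, forcing $m=0$). Imposing the surgery relation $p\,\mu'+q\,\lambda'=0$ in $H_1(W)$ — which is infinite cyclic, since $|qrs-p|=1$ forces $\gcd(p,qs)=1$ — and solving for the primitive class on $T$ that becomes null-homologous in $W$ then yields $\pm(p\,\mu_K+qs^2\,\lambda_K)$: one factor of $s$ enters through $\mu_K=s\mu'$ and the other through $\lambda'=s\lambda_K$. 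Hence $W$ is attached to $X(K)$ along the slope $p/(qs^2)$.

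I expect the homological bookkeeping of Step 2 to be the only genuinely delicate point — in particular, producing the factor $s^2$ (rather than $s$ or $rs$) with consistent sign conventions for $\lambda'$, and verifying that the resulting class is primitive. As a sanity check, when $K$ is the unknot one has $C_{r,s}(K)=T_{r,s}$ and the statement reduces to Moser's identification $T_{r,s}(p/q)=L(p,qs^2)=U(p/(qs^2))$. In any case the full argument is precisely that of Corollary 7.3 in \cite{Gordon_satellite}, which we may simply invoke.
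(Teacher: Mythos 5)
Your argument is correct, and it is essentially the argument the paper is implicitly relying on: the paper offers no proof of this lemma at all, stating it with a \qed and a citation to Gordon's Corollary 7.3, so you have simply supplied the standard details that the reference contains. Your two steps are both sound. In Step 1, the identification of the regular fibre on $\partial N(C_{r,s}(K))$ with the cabling slope $rs\mu'+\lambda'$ follows exactly from the linking numbers you quote ($\lk(\mu_K,C_{r,s}(K))=s$ and $\lk(\lambda_K,C_{r,s}(K))=0$), and filling along a slope meeting the fibre once turns the cable space into a Seifert fibration over the disc with one exceptional fibre of order $s$, i.e.\ a solid torus. In Step 2, the relations $\mu_K=s\mu'$ and $\lambda'=s\lambda_K$ in $H_1(C)$ are correctly justified (the second via vanishing of $\lambda'$ and $\lambda_K$ in $H_1(S^3\setminus N(C_{r,s}(K)))$), and since $|qrs-p|=1$ forces $\gcd(p,qs^2)=1$, the primitive class of $H_1(T)$ dying in $H_1(W)$ is indeed $\pm(p\,\mu_K+qs^2\lambda_K)$; as $W$ is a solid torus this class is its meridian, which is exactly what is needed. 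The only point worth flagging is that you should confirm the diffeomorphism is orientation-preserving (as required by the paper's convention for $\cong$), but this is automatic here since both manifolds are realised as Dehn fillings of the same exterior $X(K)$ along the same slope, with the identity on $X(K)$.
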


The genus of a cable knot satisfies $g(C_{r,s}(K))\geq s\cdot g(K)+\frac{(s-1)(|r|-1)}{2}$ \cite[Section 12]{Schubert_Genus}. In particular, a knot $K$ is never isotopic to its cable $C_{r,s}(K)$ for $s\neq \pm 1$. Hence, we obtain the following results, which was already observed by Motegi \cite{Motegi_surgery}.

\begin{prop}\label{thm:cable_slopes}
    There exist infinitely many slopes that are non-strongly-character\-is\-ing for every knot. More concretely, let $(r,s)$ and $(p,q)$ be coprime such that $|qrs-p|=1$ and $s\neq \pm1$, then $p/(qs^2)$ is a non-strongly-characterising slope for any knot $K$.\qed
\end{prop}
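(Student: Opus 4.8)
The plan is to read the second surgery description straight off the cable filling formula. Let $(r,s)$ be coprime with $s\neq\pm1$ and let $(p,q)$ be coprime with $|qrs-p|=1$. First I would apply Lemma~\ref{lemma:cable_filling}, which yields an orientation-preserving diffeomorphism $K(p/(qs^2))\cong C_{r,s}(K)(p/q)$ for \emph{every} knot $K$. Thus the manifold $K(p/(qs^2))$ carries the two surgery descriptions $(K,\,p/(qs^2))$ and $(C_{r,s}(K),\,p/q)$, and to conclude that $p/(qs^2)$ is not strongly-characterising for $K$ it suffices to know that these are genuinely different, i.e.\ that $C_{r,s}(K)$ is not isotopic to $K$.

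Second, I would establish $C_{r,s}(K)\not\cong K$ from the genus inequality $g(C_{r,s}(K))\geq s\,g(K)+\tfrac{(s-1)(|r|-1)}{2}$ recalled above. Since $s\geq 2$, either $K$ is nontrivial and then $s\,g(K)\geq 2g(K)>g(K)$, or $K$ is the unknot and then $g(C_{r,s}(K))\geq \tfrac{(s-1)(|r|-1)}{2}>0=g(K)$ provided $|r|\geq 2$. In either case $g(C_{r,s}(K))>g(K)$, so $C_{r,s}(K)$ is not isotopic to $K$; hence $(C_{r,s}(K),\,p/q)$ is a surgery description of $K(p/(qs^2))$ along a knot distinct from $K$, and therefore $p/(qs^2)$ is not strongly-characterising for $K$. (The one configuration this leaves aside, $K$ the unknot with $|r|=1$, where $C_{r,s}(K)=K$, is degenerate and is not needed below, since there we are free to take $|r|\geq 2$.)

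Finally, for the first assertion I would exhibit a single explicit infinite family valid for all $K$ at once. Fix $r=2$ and $s=3$, and for each integer $q\geq 1$ set $p=6q+1$. Then $\gcd(r,s)=1$, $\gcd(p,q)=\gcd(6q+1,q)=1$, $s\neq\pm1$, and $|qrs-p|=|6q-(6q+1)|=1$, so the previous two paragraphs apply verbatim (the genus bound here reads $g(C_{2,3}(K))\geq 3g(K)+1>g(K)$ for \emph{every} knot $K$). The resulting slopes $p/(qs^2)=(6q+1)/(9q)$ are pairwise distinct reduced fractions — they strictly decrease to $2/3$ — so there are infinitely many of them, each non-strongly-characterising for every knot, which is the claim.

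There is no real obstacle here: the entire content sits in Lemma~\ref{lemma:cable_filling} and the genus bound, and the only thing requiring attention is bookkeeping of the cabling conventions (so that the filling formula is applied to the slope $p/(qs^2)$, not $p/(qs)$) together with the trivial-$K$, $|r|=1$ edge case just flagged.
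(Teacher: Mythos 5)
Your proof is correct and follows essentially the same route the paper intends: combine Lemma~\ref{lemma:cable_filling} with Schubert's genus bound $g(C_{r,s}(K))\geq s\,g(K)+\tfrac{(s-1)(|r|-1)}{2}$ to see that $C_{r,s}(K)\not\cong K$, so $(C_{r,s}(K),p/q)$ is a second surgery description of $K(p/(qs^2))$. Your explicit family $(6q+1)/(9q)$ and your flagging of the degenerate case ($K$ the unknot with $|r|=1$, where the cable is again the unknot) are in fact slightly more careful than the paper's own one-line treatment.
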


\begin{rem}
    Hence, for example, the slope $1/4$ is never strongly-characterising.
\end{rem}

\subsection{Non-cable slopes}
In Proposition~\ref{thm:cable_slopes}, we have constructed infinitely many non-strongly-characterising slopes for every knot. However, all these slopes come from the cable construction. On the other hand, there are a multitude of other constructions of non-strongly-characterising slopes that do not come from cables. In the following, we will discuss some of these constructions. In particular, we will construct knots that have infinitely many non-strongly-characterising slopes of unbounded denominators not coming from the cable construction. Related constructions are given in~\cite{Brakes_non_char,Livingston_surgery,McCoy_Pretzel}.\\

\begin{figure}[htbp]
    \centering
    \includegraphics[width=0.6\linewidth]{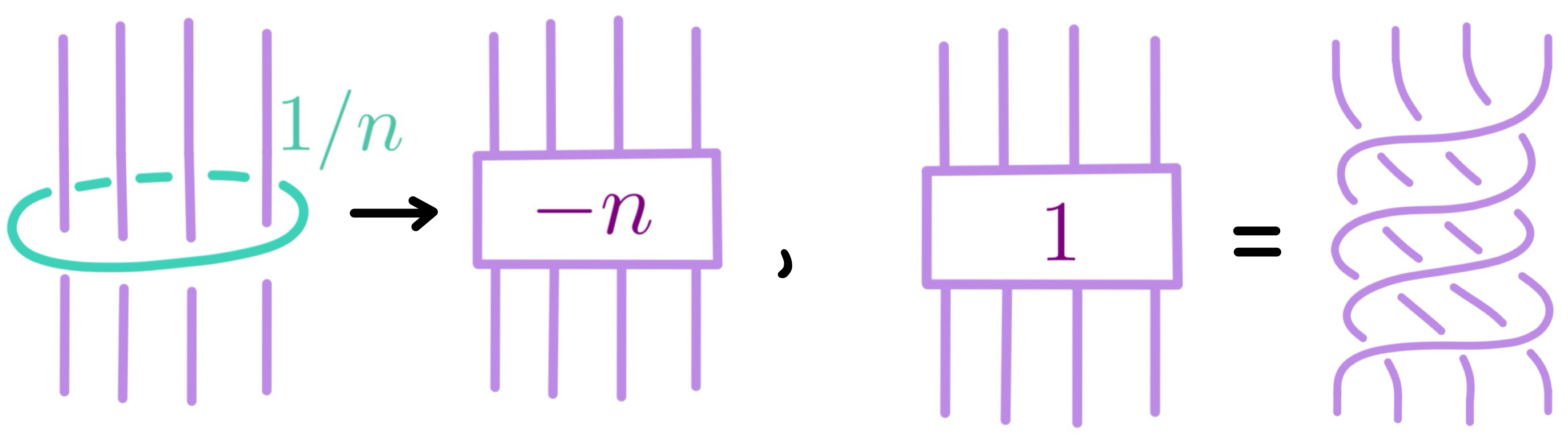} 
    \caption{Sketch of a Rolfson twist (left) and a twist box (right).}
    \label{fig:Rolfson_twist}
\end{figure}

Before we start, let us briefly recall the concept of a Rolfsen twist. Let $L$ be a link with unknotted component $L_0$. For an integer $n\in\Z$, an $(-n)$-\textit{fold Rolfsen }twist along $L_0$ is the process of performing $(1/n)$-surgery on $L_0$. This will remove the component $L_0$ from $L$ and introduce a $(-n)$-fold full twist to all strands that intersect the Seifert disk of $L_0$, as shown in Figure \ref{fig:Rolfson_twist}. To indicate an $n$-fold full twist, we will often draw a twist box as shown in Figure \ref{fig:Rolfson_twist}. \\

All upcoming constructions will be based on the following lemma.

\begin{lem}\label{lem:two_components}
 Let $L=L_1\cup L_2$ be an oriented $2$-component link with unknotted components $L_1$ and $L_2$. Let $l$ be the linking number $\lk(L_1,L_2)$.
 Then for all integers $n,m\in\Z$
 \begin{align*}
  K_m=\,L\left(*,\frac{1}{m}\right) \textrm{ and }
  J_n=\,L\left(\frac{1}{n},*\right)
 	\end{align*}
 are knots in $S^3$, such that $K_m(-ml^2+\frac{1}{n})\cong J_n(-nl^2+\frac{1}{m})$. Here the notation $L(*,\frac{1}{m})$ means that we perform a $(1/n)$-surgery on $L_2$, the second component of $L$ and leave $L_1$, the first component of $L$, unfilled. 
\end{lem}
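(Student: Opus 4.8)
The plan is to exhibit both $K_m(-ml^2+\tfrac1n)$ and $J_n(-nl^2+\tfrac1m)$ as one and the same closed oriented manifold, namely the manifold $M$ obtained from $S^3$ by performing surgery on the whole \emph{link} $L=L_1\cup L_2$ with coefficient $\tfrac1n$ on $L_1$ and coefficient $\tfrac1m$ on $L_2$. First note that $K_m$ and $J_n$ genuinely are knots in $S^3$: since $L_1$ and $L_2$ are unknotted, $\tfrac1k$-surgery on either component returns $S^3$, and the unfilled component becomes a knot in that $S^3$. Now the two Dehn fillings making up $M$ are performed along disjoint components of $\partial\big(S^3\setminus N(L)\big)$, so they commute. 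Filling the $L_2$-side first turns the complement of $L_1$ in the resulting $S^3$ into $S^3\setminus N(K_m)$; hence $M\cong K_m(r)$ for whatever slope $r$ the leftover $\tfrac1n$-filling on $L_1$ becomes with respect to the meridian and Seifert longitude of $K_m$. Filling the $L_1$-side first is entirely symmetric. So everything reduces to identifying these leftover slopes.

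For the bookkeeping I would work in $X:=S^3\setminus N(L_1\cup L_2)$, where $H_1(X)\cong\Z\mu_1\oplus\Z\mu_2$. A Seifert surface for $L_i$ meets $L_j$ in exactly $l=\lk(L_1,L_2)$ points, so after deleting $N(L_j)$ it becomes a surface in $X$ cobounding $\lambda_i$ with $l$ meridians of $L_j$; hence $\lambda_1=l\mu_2$ and $\lambda_2=l\mu_1$ in $H_1(X)$. Dehn filling the $L_2$-boundary along the $\tfrac1m$-slope $\mu_2+m\lambda_2=\mu_2+ml\mu_1$ gives $Y:=S^3\setminus N(K_m)$ with $H_1(Y)\cong H_1(X)/\langle \mu_2+ml\mu_1\rangle\cong\Z$, in which $\lambda_1=l\mu_2=-ml^2\mu_1$. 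The meridian $\mu_1$ is still the meridian $\mu_{K_m}$, and the Seifert longitude $\lambda_{K_m}$ is the unique longitude that is null-homologous in $Y$, so $\lambda_{K_m}=\lambda_1+ml^2\mu_1$. Therefore the $\tfrac1n$-filling slope $\mu_1+n\lambda_1$ on $L_1$ equals $(1-nml^2)\mu_{K_m}+n\lambda_{K_m}$, i.e.\ the slope $-ml^2+\tfrac1n$ on $K_m$, which shows $M\cong K_m(-ml^2+\tfrac1n)$. (Alternatively, one can read this off directly from the Rolfsen twist formula applied to the $\tfrac1m$-framed unknot $L_2$.)

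Interchanging the roles of $L_1$ and $L_2$ and of $m$ and $n$ — which the hypotheses permit verbatim — the same computation gives $M\cong J_n(-nl^2+\tfrac1m)$, and comparing the two descriptions yields the claimed diffeomorphism $K_m(-ml^2+\tfrac1n)\cong J_n(-nl^2+\tfrac1m)$; all the identifications used (surgery on the oriented $S^3$, and the commuting Dehn fillings) are orientation-preserving, so the diffeomorphism is too. The one place demanding genuine care is the framing-change computation, i.e.\ pinning down the coefficient $-ml^2$ together with its sign; I would rather derive it from the explicit $H_1$-calculation above than quote a sign convention. Reassuringly, only $l^2$ enters, so the final formula is insensitive to the orientation conventions chosen for the two longitudes, which is consistent with the lemma being stated for knots without reference to orientation.
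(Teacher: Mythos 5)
Your proposal is correct and takes essentially the same route as the paper: both identify $K_m(-ml^2+\tfrac1n)$ and $J_n(-nl^2+\tfrac1m)$ with the link surgery $L(\tfrac1n,\tfrac1m)$ and track the framing change coming from the Rolfsen twist. The only difference is that you derive the coefficient $-ml^2$ by an explicit (and correct) $H_1$ computation in the link exterior, whereas the paper simply quotes the standard Rolfsen twist framing formula; this is an elaboration of the same argument rather than a different one.
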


\begin{proof}
    Since $L_2$ is unknotted, $(1/m)$-surgery on $L_2$ produces again $S^3$ and thus $K_m$ defines a knot in $S^3$. $K_m$ is obtained from $L_1$ by a $(-m)$-fold Rolfsen twist along $L_2$. By symmetry, the same is true for $J_n$. By following the framing change in the Rolfsen twist, we obtain $K_m(-ml^2+\frac{1}{n})\cong L(\frac{1}{n},\frac{1}{m})\cong J_n(-nl^2+\frac{1}{m})$. We refer to Figure~\ref{fig:examples} for an example.
\end{proof}

Note that in Lemma~\ref{lem:two_components} the knots $K_m$ and $J_n$ could in principle be isotopic, and for overly simple or symmetric links $L$ this will indeed be the case. However, there is no reason to expect this to happen in general. We now proceed by describing several cases where $K_m$ and $J_n$ are distinct. 

\begin{defi}
 Consider a link $L=L_1\cup L_2$ with unknotted component $L_2$. Suppose that $L$ is not split and $L_2$ is not the meridian of $L_1$, that is $L_1, L_2$ have geometric linking number at least $2$. We call the set of knots $K_m=L(*, \frac 1m), m\in \mathbb Z$, obtained by twisting $K_0=L_1$ $(-m)$-times along the unknot $L_2$ a {\it twist family}. 
\end{defi}

\begin{thm}[Kouno--Motegi--Shibuya, Theorem 3.2 \cite{Motegi_twisting}]\label{thm:Motegi_twist}
    Let $(K_m)_{m\in\Z}$ be a twist family of knots.
    Then for any integer $m_0$, there exist at most finitely many integers $m$ such that $K_m$ is isotopic to $K_{m_0}$.\qed
\end{thm}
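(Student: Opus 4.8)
The plan is to realise the entire twist family as Dehn fillings of one fixed $3$-manifold. Let $X = S^3\setminus N(L_1\cup L_2)$ be the exterior of the link $L$, with boundary tori $T_1 = \partial N(L_1)$ and $T_2 = \partial N(L_2)$. Because $L_2$ is unknotted, performing the $(1/m)$-surgery on $L_2$ is the same as Dehn filling $X$ along the slope $1/m$ on $T_2$ (measured with respect to the meridian and Seifert longitude of $L_2$), and leaving $T_1$ unfilled produces exactly the exterior $E(K_m) = S^3\setminus N(K_m)$. Thus $E(K_m) = X(1/m)$ with $X$ independent of $m$, and the filling slopes satisfy $\Delta(1/m, 1/m') = |m-m'|$, so they become arbitrarily far apart on $T_2$. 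By the Gordon--Luecke theorem it then suffices to find an integer $N$ such that $E(K_m)\not\cong E(K_{m'})$ whenever $|m|\neq |m'|$ and $|m|,|m'|\ge N$, since this forces every isotopy class to be realised by only finitely many values of $m$.

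First I would record that $X$ is irreducible and has incompressible boundary. Irreducibility follows from $L$ being non-split. For $T_2$: a compressing disk would have to be bounded by the Seifert longitude $\lambda_2$ (the meridian disk of $N(L_2)$ lies outside $X$, and no other slope bounds a disk in the solid torus $S^3\setminus N(L_2)\supset X$), hence would be a Seifert disk of the unknot $L_2$ disjoint from $L_1$, contradicting that the geometric linking number is at least $2$; a similar argument handles $T_1$. Moreover, only finitely many of the slopes $1/m$ can make $X(1/m)$ reducible or give compressible boundary, since any two such ``bad'' slopes lie within bounded distance on $T_2$; discarding these, we may assume every $K_m$ under consideration is a non-trivial knot.

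Now dichotomise on the geometry of $X$. If $X$ is hyperbolic, Thurston's hyperbolic Dehn surgery theorem gives that $E(K_m) = X(1/m)$ is hyperbolic for all but finitely many $m$, with $\vol E(K_m) < \vol X$, and the Neumann--Zagier asymptotics $\vol X - \vol E(K_m) \sim c/m^2$ show that $\vol E(K_m)$ is, for $|m|$ large, a strictly increasing function of $|m|$; hence $E(K_m)$ and $E(K_{m'})$ have distinct volumes, so are non-diffeomorphic, whenever $|m|\neq|m'|$ are both large. If $X$ is not hyperbolic, let $W$ be the piece of the JSJ decomposition of $X$ containing $T_2$, and put $W_m = W(1/m)$. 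Apart from finitely many exceptional $m$, the slope $1/m$ avoids the bad slopes for $W$, and then: if $W$ is hyperbolic, $W_m$ is hyperbolic with $\vol W_m$ strictly increasing in $|m|$ by the same argument; if $W$ is Seifert fibred, then because the geometric-linking hypothesis prevents the fibre slope on $T_2$ from being $\lambda_2$, the intersection number of $1/m$ with the fibre slope grows linearly in $|m|$, so $W_m$ is Seifert fibred over a base orbifold which acquires an exceptional point of unbounded order. Since the other boundary tori of $W$ and the remaining JSJ pieces of $X$ do not change with $m$, the piece $W_m$ (after the $m$-independent amount of amalgamation with neighbouring Seifert pieces that may occur) appears as a canonical piece of the decomposition of $E(K_m)$, so its homeomorphism type varies with $|m|$ and is repeated only finitely often. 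If the amalgamation absorbs all of $E(K_m)$, then $E(K_m)$ is a Seifert fibred knot exterior, hence a torus knot exterior, and these are separated by their Seifert genus, which again grows with $|m|$. In every case $E(K_m)\cong E(K_{m_0})$ for only finitely many $m$.

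The hyperbolic case is essentially immediate from Thurston's theorem and the Neumann--Zagier volume formula; the main obstacle is the non-hyperbolic case, where one must verify that the varying piece $W_m$ genuinely survives as a recognisable canonical piece of the JSJ decomposition of $E(K_m)$ --- that it is neither absorbed into an adjacent hyperbolic piece nor amalgamated with neighbouring Seifert pieces in an $m$-dependent way that could cancel the growth of its base orbifold --- and that the geometric-linking hypothesis really does exclude the degenerate fibrations (fibre slope $\lambda_2$) for which the filling would be independent of $m$. Carrying this out, together with the bookkeeping of the finitely many reducible and exceptional fillings, is where the real work lies.
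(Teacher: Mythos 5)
Note first that the paper does not prove this statement: it is quoted from Kouno--Motegi--Shibuya with a citation and a \verb|\qed|, so there is no in-paper argument to measure you against. Your strategy --- realise every $E(K_m)$ as the filling $X(1/m)$ of the fixed link exterior $X$ along $T_2$, discard the finitely many reducible/compressible fillings, and then split into the hyperbolic and toroidal cases --- is the standard one and, as far as I can tell, is essentially the route of the cited reference. The hyperbolic case is in good shape, with one caveat: strict monotonicity of $\vol E(K_m)$ in $|m|$ is not what Neumann--Zagier gives you. The normalised length squared of the slope $1/m$ is a quadratic $Am^2+Bm+C$ in $m$, so slopes $1/m$ and $1/m'$ with $m+m'=-B/A$ have equal length to leading order even when $|m|\neq|m'|$, and the higher-order terms are not controlled by length alone. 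Fortunately you do not need monotonicity: since $\vol X(1/m)<\vol X$ and $\vol X(1/m)\to\vol X$, the fixed value $\vol E(K_{m_0})$ can be attained for only finitely many $m$, and that is all the statement asks for a fixed $m_0$.

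The genuine gap is in the non-hyperbolic case, and you have correctly located it but not closed it; as written the proof stops exactly where the substance of the theorem begins. Two assertions carry all the weight. (i) That the fibre slope of a Seifert piece $W$ adjacent to $T_2$ cannot be $\lambda_2$. This is precisely where the hypotheses ``non-split and not a meridian'' must enter, and the deduction is not automatic: one needs the analogue for two-component links of Budney's classification (Theorem~\ref{thm:Budney_knot_JSJ}) of the pieces that can sit against $T_2$ inside the solid torus $S^3\setminus N(L_2)$, together with an argument that a fibre slope of $\lambda_2$ would force $L_2$ to be a meridian of $L_1$ or $L$ to be split (e.g.\ by filling $T_2$ along $\mu_2$ and identifying $L_2$ with a regular fibre of a Seifert piece of $E(L_1)$). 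If this degenerate case did occur, then $\Delta(1/m,\text{fibre})=1$ for every $m$, the filled piece would not vary with $m$ at all, and your invariant would see nothing --- so the entire Seifert branch hinges on this unproved exclusion. (ii) That $W(1/m)$ survives as a canonical piece of the JSJ decomposition of $E(K_m)$ for all large $|m|$, rather than being lost to a compression, to an $m$-dependent amalgamation with a neighbouring Seifert piece along a torus that becomes non-canonical after filling, or to a degeneration of $W(1/m)$ to a solid torus. Controlling exactly this phenomenon is the content of Theorems~\ref{thm:knot_JSJ_filling_component} and~\ref{thm:JSJ_filling} for knot exteriors; you would need, and would have to prove, the analogous statement for the manifold $X$ with its two boundary tori, filled along $T_2$. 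Both points are true and provable, but they are the real work, and the proposal acknowledges rather than supplies them.
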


Combining Lemma~\ref{lem:two_components} with Theorem~\ref{thm:Motegi_twist}, yields.

\begin{thm}\label{thm:untwisting_precise}
    Let $K$ be a non-trivial knot that can be unknotted by twisting $m$-times along an unknot $c$ with linking number $l=\lk(K,c)$. Then all but finitely many slopes of the form $-ml^2+\frac{1}{n}$, for $n\in\Z\setminus\{0\}$, are non-strongly-characterising for $K$.    
\end{thm}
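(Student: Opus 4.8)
The plan is to realize $K$ as one member of the two-parameter family of surgery identities produced by Lemma~\ref{lem:two_components}, and then to invoke Theorem~\ref{thm:Motegi_twist} to show that the accompanying member is almost never isotopic to $K$.

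\emph{Setting up the link.} By hypothesis there is a Dehn surgery on the unknot $c$ realizing the $m$-fold twist (hence a $\pm 1/m$-surgery) which returns $S^3$ and carries $K$ to the unknot $U$. Let $c^*$ be the core of the surgery solid torus, regarded as a knot in this new copy of $S^3$, and set $L:=L_1\cup L_2$ with $L_1:=U$ and $L_2:=c^*$. First I would record two facts about $L$: the component $c^*$ is unknotted, since its exterior is the exterior of the unknot $c$, which is a solid torus; and $|\lk(L_1,L_2)|=|l|$, by a homology computation in the common exterior $S^3\setminus N(K\cup c)=S^3\setminus N(U\cup c^*)$, where the meridian of $c^*$ represents the same class as the meridian of $c$ and $[U]=[K]=l\cdot[\mu_c]$. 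So $L$ is a $2$-component link with both components unknotted. Reversing the surgery, i.e.\ performing the dual $1/m$-surgery along $c^*$, recovers $(S^3,K)$ from $(S^3,U)$; in the notation of Lemma~\ref{lem:two_components} this says exactly $K=L(*,\tfrac1m)=K_m$.

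\emph{The surgery identity.} Lemma~\ref{lem:two_components}, applied to this $L$, then yields for every $n\in\Z\setminus\{0\}$
\[
K\!\left(-ml^2+\tfrac1n\right)\;\cong\;J_n\!\left(-nl^2+\tfrac1m\right),\qquad\text{where } J_n:=L\!\left(\tfrac1n,*\right).
\]
Hence it is enough to prove that $J_n$ is not isotopic to $K$ for all but finitely many $n$: for each such $n$ the displayed diffeomorphism exhibits $K(-ml^2+\tfrac1n)$ as a surgery along a knot different from $K$, so $-ml^2+\tfrac1n$ is not strongly-characterising for $K$.

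\emph{Ruling out $J_n\cong K$.} I would check that $(J_n)_{n\in\Z}$ is a twist family in the sense of the definition preceding Theorem~\ref{thm:Motegi_twist}, with twisting circle $L_1=U$ (which is unknotted). The link $L$ is not split: its exterior coincides with that of $K\cup c$, and if $K\cup c$ were split then the twist along $c$ would not change the knot type of $K$, contradicting that it turns the non-trivial $K$ into the unknot. And $L_1$ is not a meridian of $L_2$: otherwise $U\cup c^*$ is a Hopf link, so $K\cup c$ has the exterior of a Hopf link, forcing the exterior of $K$ to be a solid torus, i.e.\ $K$ to be unknotted — again impossible. (In that borderline case every $J_n$ is the unknot, so $J_n\not\cong K$ for all $n$ in any event.) Thus $(J_n)$ is a twist family, and Theorem~\ref{thm:Motegi_twist} implies that at most finitely many $n$ satisfy $J_n\cong K$. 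Together with the surgery identity this proves the theorem.

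\emph{Main obstacle.} The conceptual input is small; the care is all in the first step — passing correctly to the dual knot $c^*$, checking that it is unknotted with $|\lk(U,c^*)|=|l|$, and, most delicately, tracking the Rolfsen-twist sign conventions so that the surgery identity produces precisely the slopes $-ml^2+\tfrac1n$ — together with the brief topological verifications (non-split, non-meridian) needed to apply Theorem~\ref{thm:Motegi_twist}.
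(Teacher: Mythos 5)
Your proposal is correct and follows essentially the same route as the paper: form the two-component unlink $L$ from the unknotted image of $K$ and the dual circle $c^*$, apply Lemma~\ref{lem:two_components} to get $K(-ml^2+\tfrac1n)\cong J_n(-nl^2+\tfrac1m)$, and invoke Theorem~\ref{thm:Motegi_twist} to rule out $J_n\cong K$ for all but finitely many $n$. You simply spell out details the paper leaves implicit (the dual-knot setup and the non-split/non-meridian checks needed for the twist-family hypothesis), all of which are verified correctly.
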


\begin{proof}
    Twisting $m$ times along the unknot $c$, yields a $2$-component link consisting of unknots as in Lemma~\ref{lem:two_components}. Thus we get for any integer $n\in\Z$, a knot $J_n$ such that $K(-ml^2+\frac{1}{n})\cong J_n(-nl^2+\frac{1}{m})$. The $(J_n)_{n\in\Z}$ form a twist family of knots. Thus, Theorem~\ref{thm:Motegi_twist} implies that all but finitely many of the $J_n$ are different from $K$. 
\end{proof}

Note that many knots can be untwisted in different ways. In particular, this holds for knots with unknotting number one.

\begin{cor}\label{cor:unknotting_precise}
    Let $K$ be a non-trivial knot that can be unknotted by changing a positive/negative crossing. Then, all but finitely many slopes of the form $\frac 1n$ for $n\neq 0$ and $\mp 4+\frac 1n$ for $n\neq 0$ are non-strongly-characterising for $K$. Here, the sign $\mp$ is $-$ for a positive crossing and $+$ for a negative crossing. 
\end{cor}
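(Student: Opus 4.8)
The plan is to reduce the statement to Theorem~\ref{thm:untwisting_precise}, applied twice, using that a single crossing change admits two different descriptions as an untwisting along an unknot. Recall first that changing one crossing in a diagram of $K$ is the same as performing a $(\pm 1)$-surgery along an unknot $c$ that encircles the two strands meeting at that crossing, i.e.\ a $(\mp 1)$-fold Rolfsen twist along $c$; the Seifert disk of $c$ meets $K$ transversely in two points, one on each of these strands. Consequently $l:=\lk(K,c)$ equals $0$ when the two strands traverse the Seifert disk with opposite orientations and equals $\pm 2$ when they traverse it coherently.

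First I would record the standard ``clasp'' picture, in which the crossing change is realized along an unknot $c_0$ whose two strands are oppositely oriented, so that $\lk(K,c_0)=0$; since $l^2=0$, Theorem~\ref{thm:untwisting_precise} then gives that all but finitely many slopes $-m\cdot 0+\tfrac1n=\tfrac1n$, for $n\in\Z\setminus\{0\}$, are non-strongly-characterising for $K$ (note this family does not depend on the value of $m$). Next I would produce, from the very same crossing change, a second unknot $c_1$ realizing it whose two strands traverse its Seifert disk coherently, so that $\lk(K,c_1)=\pm 2$: one reroutes the encircling unknot alongside the complementary arc of $K$ so as to reverse the sense of one of the two piercings of its spanning disk, which converts the linking number from $0$ to $\pm 2$ without altering the resulting crossing change. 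Applying Theorem~\ref{thm:untwisting_precise} to $c_1$, now with $l^2=4$, gives that all but finitely many slopes $-m\cdot 4+\tfrac1n=\mp 4+\tfrac1n$, for $n\in\Z\setminus\{0\}$, are non-strongly-characterising for $K$; here $m\in\{+1,-1\}$ is determined by the sign of the changed crossing so that the signs come out exactly as in the statement, and one checks that $K$ being non-trivial together with the crossing change unknotting $K$ ensures both $c_0$ and $c_1$ satisfy the hypotheses of Theorem~\ref{thm:untwisting_precise}.

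Combining the two applications proves the corollary, since a union of two finite sets is finite. The hard part will be the construction of the second description, that is, verifying that an arbitrary unknotting crossing change on $K$ really does admit both a linking-number-$0$ and a linking-number-$\pm 2$ realization. I would carry this out by an explicit diagrammatic argument --- passing between the two configurations by means of a Reidemeister move near the crossing together with a rechoice of the encircling unknot --- or, failing a sufficiently clean picture, by invoking the standard normal form for unknotting-number-one knots.
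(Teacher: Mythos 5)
Your argument is correct and is essentially the paper's proof: the paper likewise realises the crossing change as a $(\pm 1)$-twist along a crossing circle, notes that there are two choices of such an unknot circling the crossing --- one of linking number $0$ and one of linking number $2$ --- and applies Theorem~\ref{thm:untwisting_precise} to each. The only cosmetic difference is that the second circle is most cleanly obtained as the other of the two local ways of encircling the two strands at the crossing (the paper simply points to Figure~\ref{fig:examples}), rather than by rerouting the first circle along an arc of $K$.
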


\begin{proof}
    A positive/negative crossing can be changed by a full negative or full positive twist along an unknot $c$  circling the crossing. There are two choices of an unknot circling the crossing, one of which has linking number $0$ and the other has linking number $2$ with $K$. See Figure~\ref{fig:examples} for examples. Hence, the result follows from Theorem~\ref{thm:untwisting_precise}.
\end{proof}

Of course, in concrete examples, we expect all (or all but one) of these slopes to be non-strongly-characterising. Indeed, for knots $K$ with untwisting number one, we can improve the statement of Theorem~\ref{thm:untwisting_precise} by distinguishing the $J_n$ from $K$. This is easiest with invariants that behave well under twisting, as for example the hyperbolic volume or the Alexander polynomial, see for example the proof of Theorem~\ref{thm:all_slopes_precise}. 

\begin{ex}\label{ex:trefoil_fig8}
    We discuss the two simplest knots, the trefoil $K3a1$ and the figure eight knot $K4a1$. 
    \begin{enumerate}
        \item First, we note that the figure eight knot is isotopic to its mirror $-K4a1$. Thus it follows that if $K4a1(p/q)\cong J(p/q')$, then 
        $$K4a1(-p/q)\cong (-K4a1)(-p/q)\cong (-J)(-p/q').$$
        In particular, we get that if $p/q$ is non-strongly-characterising for $K$, then $-p/q$ is also non-strongly-characterising for $K$. Thus, for the figure eight knot, it is sufficient to only discuss non-negative slopes.

        In the first two rows of Figure~\ref{fig:examples} we present two different ways to unknot $K4a1$, which yields for every integer $n$ families of knots $J_n$ and $I_n$ such that $K4a1(1/n)\cong J_n(-1)$ and $K4a1(4+1/n)\cong I_n(-1-4n)$. Theorem~\ref{thm:Motegi_twist} implies that all but finitely many of the $I_n$ and $J_n$ are different from $K4a1$. In fact, here the knots $J_n$ are all twist knots and the $I_n$ are double twist knots for which it is straightforward to check (for example using the volumes or the Alexander polynomials) that for positive $n$ the knots $I_n$ and $J_n$ are all different from $K4a1$ and thus any slope of the form $1/n$ or $4+1/n$ for $n\neq 0$ is non-strongly-characterising for the figure eight knot.
        \item The last two rows in Figure~\ref{fig:examples} show the same for the right-handed trefoil knot. There we have constructed two families of knots $G_n$ and $H_n$ such that $K3a1(1/n)\cong G_n(1)$ and $K3a1(4+1/n)\cong H_n(-1-4n)$. The family $G_n$ agrees with the twist knots $J_n$ constructed for the figure eight knot, and the $H_n$ are the $2$-stranded torus knots $T_{2,1-2n}$. Thus it follows that $H_{-1}$ and $G_1$ are both isotopic to $K3a1$ while all the other $H_n$ and $G_n$ are different from $K3a1$. This implies that any slope of the form $1/n$ or $4+1/n$ is non-strongly-characterising for the right-handed trefoil, except $1$ (which is strongly-characterising by~\cite{Ghighini_Poincare_Sphere}) and possibly $3$. 
    \end{enumerate} 
\end{ex}

As communicated to us by Steven Sivek, $3$ is indeed strongly-characterising.

\begin{prop}\label{Sivek_Lemma}  The slopes $1,2,3,4,3/2$, and $4/3$ are strongly characterising for the right-handed trefoil. 
\end{prop}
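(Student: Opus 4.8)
The plan is to identify the six surgered manifolds explicitly and then to exploit the rigidity of Heegaard Floer correction terms for L-space surgeries. By Moser's classification of Dehn surgeries on torus knots, for each $r\in\{1,2,3,4,3/2,4/3\}$ the manifold $M_r:=T_{2,3}(r)$ is a Seifert fibered space over $S^2$ with exactly three exceptional fibers, of multiplicities $(2,3,5)$, $(2,3,4)$, $(2,3,3)$, $(2,2,3)$, $(2,3,9)$, $(2,3,14)$ respectively; in particular no $M_r$ is a lens space, a connected sum, $S^3$, or $S^1\times S^2$, and writing $r=p/q$ in lowest terms we have $|H_1(M_r)|=p$. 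Since $T_{2,3}$ is an L-space knot with $2g(T_{2,3})-1=1$ and each slope is $\ge 1$, every $M_r$ is an L-space, whose correction terms are the explicitly known ones coming from the $V$-sequence of $T_{2,3}$, namely $V_0=1$ and $V_j=0$ for $j\ge1$.

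Now suppose $M_r\cong K'(r')$. If $r'\in\{0,\infty\}$ we are done, since then $K'(r')$ is either $S^3$ or has $b_1=1$; otherwise first homology forces $|r'|=p/q'$ with $\gcd(p,q')=1$. Consider first $r'=p/q'>0$. Because $M_r$ is an L-space, the Ozsv\'ath--Szab\'o theory of L-space surgeries shows that $K'$ is an L-space knot with $p/q'\ge 2g(K')-1$; in particular $g(K')\le (p+q')/(2q')$, so only finitely many pairs $(q',g(K'))$ are admissible, and for each $q'$ only finitely many L-space-knot Alexander polynomials of the corresponding bounded degree. For each admissible $q'$ the correction terms of $K'(p/q')$ are determined, via the rational surgery formula, by $q'$ together with the sequence $(V_j(K'))_{j\ge0}$ (which vanishes for $j\ge g(K')$); matching these against the known correction terms of $M_r=T_{2,3}(p/q)$ is a finite computation forcing $q'=q$ and $V_\bullet(K')=V_\bullet(T_{2,3})$, hence, inverting the passage from torsion coefficients to the Alexander polynomial, $\Delta_{K'}(t)=t-1+t^{-1}$ and $g(K')=1$. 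Since L-space knots are fibered and the genus-one fibered knots in $S^3$ are exactly the two trefoils and the figure-eight knot, and among these only the trefoils have Alexander polynomial $t-1+t^{-1}$ while only the right-handed trefoil is an L-space knot, we conclude $K'\simeq T_{2,3}$.

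It remains to exclude $r'=p/q'<0$. Passing to mirrors, $\overline{K'}(-r')\cong -M_r$ with $-r'>0$, so $-M_r$ — still an L-space, still not a lens space — would be a positive surgery on an L-space knot. But the correction terms of $-M_r$ are the negatives of those of $M_r$, hence too large to be realised this way: for any knot and any positive slope $p/q''$ one has $d\!\left(K''(p/q''),\mathfrak s\right)\le d\!\left(L(p,q''),\mathfrak s\right)$, and a finite check over the admissible $q''$ shows $d(-M_r,\,\cdot\,)$ fails this for every such $q''$ — the cleanest instance being $r=1$, where $d(-\Sigma(2,3,5))=2>0$ while $d(S^3)=0$. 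Hence $r'>0$ after all, and the previous paragraph gives $K'\simeq T_{2,3}$; thus each of $1,2,3,4,3/2,4/3$ is strongly-characterising for the right-handed trefoil.

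The main obstacle is the two finite verifications built into the argument: matching the correction terms of $M_r$ (respectively $-M_r$) against all L-space-knot candidates whose $p/q'$-surgery could a priori reproduce them. This requires the explicit correction terms of the Brieskorn-type spaces $M_r$ together with a careful accounting, for each admissible surgery coefficient $p/q'$, of precisely which Alexander polynomials the genus bound $p/q'\ge 2g-1$ permits; everything else is either classical (Moser's surgery formula, the classification of genus-one fibered knots) or a direct appeal to the structure theory of L-space surgeries.
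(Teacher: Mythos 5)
Your overall strategy is sound and genuinely different in its second half from the paper's. The paper also starts from the L-space surgery condition $p/q'\geq 2g(K')-1$ to get $g(K')\leq 2$, but then invokes the classification of L-space knots of genus at most $2$ (only the unknot, $T(2,3)$, and $T(2,5)$), which reduces everything to finitely many explicit torus-knot surgeries; these are then distinguished by their Seifert invariants via Moser's formula (three singular fibres rules out the unknot and lens spaces; the signs of the fibre orders rule out all negative slopes and mirrors at once; the presence or absence of an order-$3$ fibre rules out $T(2,5)$). You instead try to force $g(K')=1$ and $\Delta_{K'}=t-1+t^{-1}$ by matching correction terms, and then finish with the classification of genus-one fibered knots. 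If the matching step works, your endgame is fine.

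The gap is that the matching step is where essentially all the content of the proposition lives, and you do not carry it out: you assert that ``a finite computation'' forces $q'=q$ and $V_\bullet(K')=V_\bullet(T_{2,3})$, and likewise that a finite check excludes every negative $r'$, verifying only the single instance $r=1$. This is not a harmless omission. The genus bound alone allows $g(K')=2$ (e.g.\ $q'=1$, $p=3$ or $4$ permits the $V$-sequence of $T(2,5)$), and allows $|q''|$ up to $3$ in the mirrored case, so there are genuinely nontrivial candidates to eliminate; moreover $d$-invariants are not complete invariants of small Seifert fibered L-spaces, so it is not self-evident that they separate every candidate pair --- the paper avoids this worry by comparing the manifolds themselves through their (unique) Seifert fibrations rather than through Floer-theoretic shadows. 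A related soft spot: you pass from ``finitely many admissible Alexander polynomials'' to ``the correction terms determine which one occurs'' without noting that you first need the (short but necessary) enumeration of L-space-knot Alexander polynomials of genus $\leq 2$ coming from the Ozsv\'ath--Szab\'o coefficient constraints. To make the proof complete you must either perform the two finite $d$-invariant verifications explicitly, or replace them by the genus-$\leq 2$ L-space knot classification and a direct comparison of the resulting Seifert fibered spaces, as the paper does.
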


\begin{proof} 
First, we recall that for a positive slope $p/q$, the surgered manifold $K(p/q)$ is an L-space if and only if $K$ is an L-space knot and $p/q\geq 2g(K)-1$, where $g(K)$ denotes the $3$-genus of $K$ \cite{KMOS_unknot_char}.

Let $p/q\in \{1, 2, 3,4,3/2, 4/3\}$. Since the right-handed trefoil $K3a1$ is an L-space knot with genus one and since $p/q\geq 1= 2g(K3a1)-1$, it follows that $K3a1(p/q)$ is an L-space. If $K$ is another knot such that $K3a1(p/q)\cong K(p/q')$ for some integer $q'$, then $K$ or its mirror $-K$ is also an L-space knot. Since $$4\geq p \geq |p/q'| \geq 2g(K)-1$$ 
it follows that $g(K)\leq 2$. By \cite{Genus_2_L_Space}, the only L-space knots $K$ with $g(K)\leq 2$ are the unknot, the trefoil $K3a1$, and the cinquefoil $K5a2$. 

Next, we observe that since $K3a1$ is the torus knot $T(3,2)$, Lemma~\ref{lemma:torus_knot_filling} below shows that $K3a1(p/q)$ is a Seifert fibered space over $S^2$ with singular fibres of order $2$, $3$, and $6q-p\neq \pm 1$. In particular, $K3a1(p/q)$ is not a lens space (since a lens space is a Seifert fibered space with at most two singular fibres) and thus $K$ cannot be an unknot.

Thus, $|p/q'|\geq 2g(K)-1\geq 1$, which implies $|q'|\leq p\leq 4$. Hence, there are finitely many pairs of knot surgeries left to distinguish. Note that $K5a2$ is the torus knot $T(5,2)$ and thus we can again use Proposition~\ref{lemma:torus_knot_filling} to determine the Seifert fibered structure on these spaces, which reveals them to be different. Indeed, the orders of the fibres of $K3a1(p/q)$ are all positive, while any surgery on $-K3a1$ or $-K5a2$ contains at least one fibre of negative order. It remains to show that $K$ cannot be $K5a2$. For that we notice that any surgery on $K3a1$ contains a fibre of order $3$, while $K5a2(p/q')$, for $|q'|\leq p\leq 4$, never contains a fibre of order $3$.  
\end{proof}


\begin{figure}[htbp]
    \centering
    \includegraphics[width=0.99\linewidth]{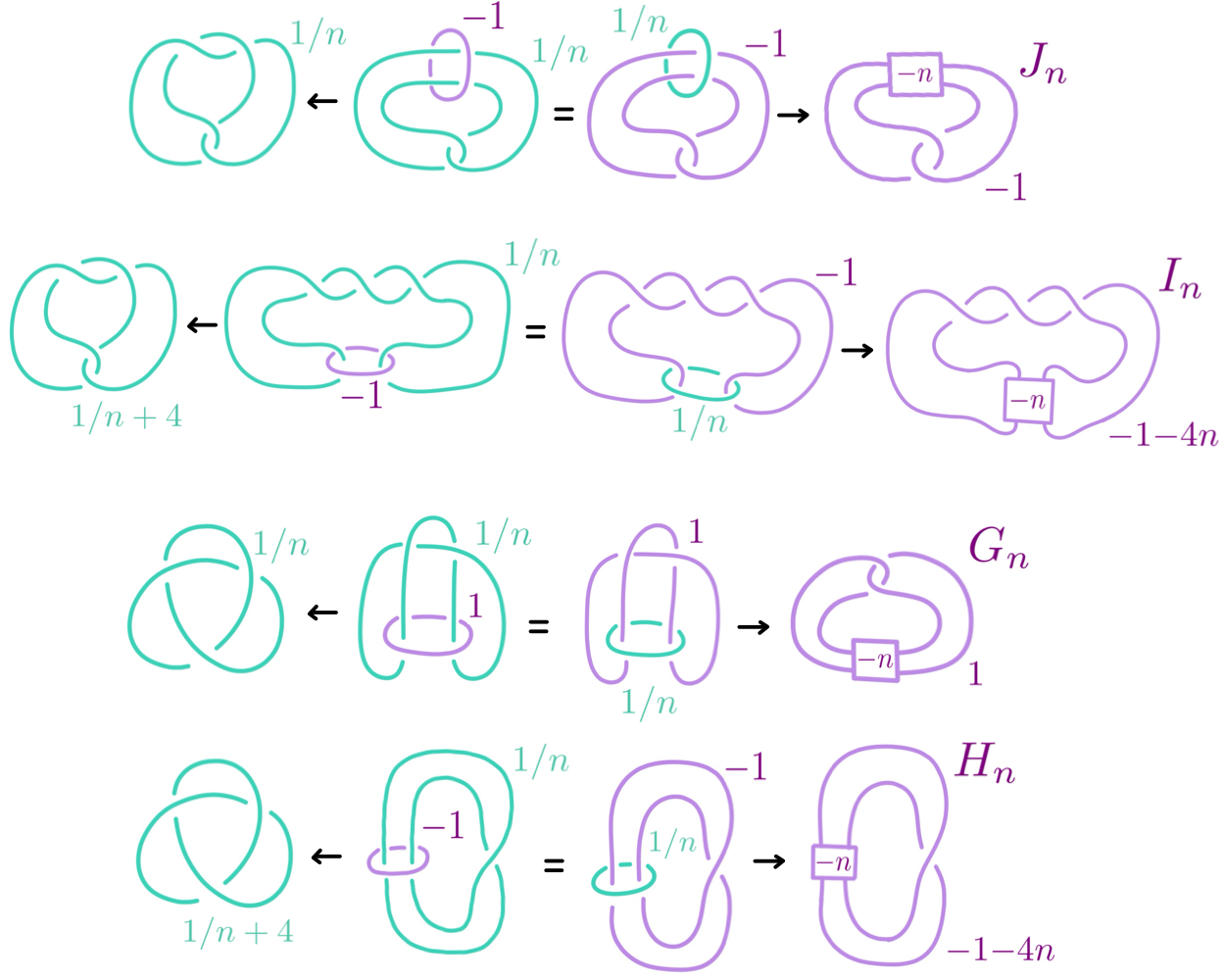} 
    \caption{Non-strongly-characterising slopes of the trefoil and the figure eight knot.}
    \label{fig:examples}
\end{figure}

To generalize the above examples, we need to be able to distinguish knots in a twist family effectively. In~\cite{Baker_Motegi_Alexander}, cf.\ Proposition 2.6 in~\cite{Baker_Motegi_non_char}, Baker--Motegi use the Torres formula~\cite{Torres} to develop the following general method to compute the Alexander polynomial of a twist family of knots.

\begin{lem}[Baker--Motegi]\label{lem:Baker_Motegi_Alexander}
    Let $L$ be an oriented $2$-component link consisting of two unknots $L_1$ and $L_2$ with linking number $\lk(L_1,L_2)=1$ and let $\Delta_L(t_1,t_2)$ be the multi-variable Alexander polynomial of $L$, where the variable $t_i$ is associated to the meridian of the knot $L_i$. Then the Alexander polynomials of the two twist families of knots $K_m=\,L\left(*,\frac{1}{m}\right)$ and $J_n=\,L\left(\frac{1}{n},*\right)$ are given by
\begin{align*}
 \Delta_{K_m}(t)&\,\dot{=}\,\Delta_L(t,t^{m}), \textrm{ and}\\
 \Delta_{J_n}(t)&\,\dot{=}\,\Delta_L(t^{n},t),
\end{align*}
here the notation $\dot{=}$ means that these two polynomials agree up to multiplication with a unit $\pm t^k$. \qed
\end{lem}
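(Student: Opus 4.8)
\medskip
\noindent\textbf{Proof sketch.}
The plan is to realise each $K_m$ as the result of $(1/m)$-surgery on the unknotted component $L_2$, leaving $L_1$ --- which becomes $K_m$ --- unfilled, exactly as in the proof of Lemma~\ref{lem:two_components}, and then to compute the Alexander module of $K_m$, namely $H_1$ of the infinite cyclic cover $\widehat{E}_{K_m}$ of the exterior $E_{K_m}=S^3\setminus N(K_m)$ as a module over $\Lambda_1=\Z[t^{\pm1}]$, out of the Alexander module $H_1(\widetilde{E}_L)$ of the link $L$, a module over $\Lambda=\Z[t_1^{\pm1},t_2^{\pm1}]$. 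The formula for $J_n$ follows by exchanging the roles of $L_1$ and $L_2$, so I only treat $K_m$.

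First I would record the effect of the surgery on homology. Filling $E_L$ along the slope $\mu_2+m\lambda_2$ induces the surjection $H_1(E_L)=\Z\langle\mu_1\rangle\oplus\Z\langle\mu_2\rangle\twoheadrightarrow H_1(E_{K_m})=\Z\langle t\rangle$ with $\mu_1\mapsto t$, and, since $\lambda_2=\lk(L_1,L_2)\,\mu_1=\mu_1$ in $H_1(E_L)$, with $\mu_2\mapsto t^{-m}$; this is the first place the hypothesis $\lk(L_1,L_2)=1$ is used, because for general linking number $l$ one would instead get $\mu_2\mapsto t^{-ml}$. Consequently $\widehat{E}_{K_m}$ is obtained from the pulled-back infinite cyclic cover $\widehat{E}_L$ of $E_L$ --- the one classified by the above composite $\pi_1(E_L)\to\Z$ --- by gluing in the preimage of the surgery solid torus $V$. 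Because the surgery slope $\mu_2+m\lambda_2$ dies in $H_1(E_{K_m})$, the torus $\partial V$ lifts to an annulus $S^1\times\R$; and because the core of $V$ represents $\pm\lambda_2=\pm t$, a generator of $H_1(E_{K_m})$ --- again using $\lk(L_1,L_2)=1$ --- the preimage of $V$ is a single copy of $D^2\times\R$, not $l$ of them. A Mayer--Vietoris argument over $\Lambda_1$ then shows that $H_1(\widehat{E}_{K_m})$ is simply $H_1(\widehat{E}_L)$ with the class of the lifted surgery slope killed, with no correction term --- the analogue of the factor $\tfrac{t^{l}-1}{t-1}$ in the Torres formula, which only appears once $l>1$.

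Finally I would identify $H_1(\widehat{E}_L)$ with the base change of $H_1(\widetilde{E}_L)$ along $\Lambda\to\Lambda_1$, $t_1\mapsto t$, $t_2\mapsto t^{-m}$, via the Wang sequence of the $\Z$-cover $\widetilde{E}_L\to\widehat{E}_L$, and then compare orders of $\Lambda_1$-modules; matching conventions gives $\Delta_{K_m}(t)\doteq\Delta_L(t,t^{m})$, the sign of the exponent being an artefact of the orientation conventions. Alternatively, all of this is packaged by the Torres formula, which is the route Baker--Motegi take. I expect the real work to be this last bookkeeping: the Alexander module $H_1(\widetilde{E}_L)$ of a link need not be $\Lambda$-torsion (unlike for a knot), so one must work with elementary ideals rather than naive module orders and check that the non-torsion part is exactly absorbed when the lifted surgery slope is killed, leaving the honest one-variable polynomial $\Delta_L(t,t^m)$, well-defined up to a unit $\pm t^{k}$.
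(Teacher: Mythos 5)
The paper does not prove this lemma at all: it is quoted from Baker--Motegi, who derive it from the Torres formula, which is precisely the route your sketch takes (infinite cyclic covers, a Mayer--Vietoris/Wang argument, and the observation that the Torres correction factor $\tfrac{t^{l}-1}{t-1}$ is trivial when $l=1$), so your outline is correct and matches the cited argument. The only point worth noting is the one you already flag: with the paper's conventions the surgery relation gives $\mu_2\mapsto t^{-m}$, so a literal computation yields $\Delta_L(t,t^{-m})$ rather than $\Delta_L(t,t^{m})$; this is a harmless relabelling $m\mapsto -m$ coming from the Rolfsen-twist sign convention and does not affect any application in the paper.
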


With this lemma, we can compute the Alexander polynomials of knots in specific twist families and improve Theorem~\ref{thm:untwisting_precise}.

\begin{thm}\label{thm:all_slopes_precise}
    There exist families of mutually distinct knots $(K_m)_{m\in\Z}$ and $(J_n)_{n\in\Z}$, such that for all non-zero integers $n$ and $m$,
    \begin{itemize}
        \item the knots $K_m$ and $J_n$ are non-isotopic, but 
        \item $K_m(m+\frac{1}{n})$ is orientation-preservingly diffeomorphic to $J_n(n+\frac{1}{m})$.
    \end{itemize}
    In particular, for every integer $m\in\Z\setminus\{0\}$ there exist a knot $K_m$ such that every slope of the form $m+\frac{1}{n}$, for $n\in\Z\setminus\{0\}$, is non-strongly-characterising for $K_m$.
\end{thm}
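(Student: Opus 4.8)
The plan is to apply Lemma~\ref{lem:two_components} and Lemma~\ref{lem:Baker_Motegi_Alexander} to a single carefully chosen two-component link $L = L_1 \cup L_2$ of unknots with linking number $\lk(L_1, L_2) = 1$. With $l = 1$, Lemma~\ref{lem:two_components} immediately gives knots $K_m = L(*, \tfrac1m)$ and $J_n = L(\tfrac1n, *)$ with the surgery identification $K_m(-m + \tfrac1n) \cong J_n(-n + \tfrac1m)$; after a trivial sign change (replacing $L$ by its mirror, or $m$ by $-m$) this becomes $K_m(m + \tfrac1n) \cong J_n(n + \tfrac1m)$, which is the second bullet. So the only real work is arranging that all the knots in both families, and across the two families, are mutually distinct; the surgery diffeomorphism is automatic.

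The key steps, in order, would be: (1) fix an explicit link $L$ of two unknots with linking number $1$ and geometric linking number at least $2$, chosen so that $L$ is not split and neither component is a meridian of the other — a standard choice is to take $L_1$ and $L_2$ clasped so that one extra ``interesting'' crossing is introduced, e.g.\ the link underlying the Whitehead-double-type or ``pretzel-type'' twist constructions of \cite{Brakes_non_char, Livingston_surgery}. (2) Compute the multivariable Alexander polynomial $\Delta_L(t_1, t_2)$ of this specific $L$ by hand. (3) Apply Lemma~\ref{lem:Baker_Motegi_Alexander} to get $\Delta_{K_m}(t) \doteq \Delta_L(t, t^m)$ and $\Delta_{J_n}(t) \doteq \Delta_L(t^n, t)$, and then read off that for $L$ chosen generically these one-variable polynomials have degrees (or leading coefficients, or other easily-extracted data) that grow with $|m|$ and $|n|$ respectively, so that $K_m \not\simeq K_{m'}$ for $m \neq m'$, and likewise for the $J_n$. (4) Distinguish the two families from each other: by the same degree count, $\Delta_{K_m}$ records the exponent-$m$ substitution while $\Delta_{J_n}$ records exponent $n$, so if $\Delta_L$ is not symmetric under swapping $t_1 \leftrightarrow t_2$ (which we arrange), then $\Delta_{K_m}(t) \neq \Delta_{J_n}(t)$ for all $m, n$ outside a finite exceptional set; we then further restrict or directly check the finitely many small cases. (Alternatively, hyperbolic volume: twisting multiplies the picture by $(-m)$-fold Rolfsen twists along an unknot, and by Thurston's hyperbolic Dehn surgery the volumes of $K_m$ converge monotonically to the volume of the link complement $S^3 \setminus L$, giving another way to see the $K_m$ are eventually all distinct, and likewise for $J_n$ with a different limiting volume, separating the two families.) (5) Conclude: for each fixed $m \neq 0$, the equation $K_m(m + \tfrac1n) \cong J_n(n + \tfrac1m)$ with $J_n \not\simeq K_m$ exhibits $m + \tfrac1n$ as non-strongly-characterising for $K_m$, for every $n \neq 0$.

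I expect the main obstacle to be step (4) — cleanly separating the family $(K_m)$ from the family $(J_n)$, and handling the finitely many ``small'' values of $m$ and $n$ where the asymptotic invariants do not yet distinguish. The honest fix is to choose $L$ asymmetrically enough (for instance so that $L_1$ and $L_2$ have different genera as knots in their respective solid-torus complements, or so that $\Delta_L(t_1, t_2)$ has manifestly unequal partial degrees in $t_1$ and $t_2$) that the exceptional set is empty, and then to verify by a finite computation that no coincidence occurs for the few remaining small cases; this is exactly the kind of ``straightforward to check using the Alexander polynomials'' verification already invoked in Example~\ref{ex:trefoil_fig8}. The remaining steps are routine: the surgery identity is Lemma~\ref{lem:two_components} verbatim with $l = 1$, and the polynomial bookkeeping is Lemma~\ref{lem:Baker_Motegi_Alexander} applied to one explicit link.
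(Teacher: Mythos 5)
Your proposal follows essentially the same route as the paper: the paper takes the explicit link $L = L9a20$ (two unknots, linking number $1$), applies Lemma~\ref{lem:two_components} to get $K_m(-m+\tfrac1n)\cong J_n(-n+\tfrac1m)$, and uses Lemma~\ref{lem:Baker_Motegi_Alexander} plus a normalisation of the resulting one-variable polynomials to distinguish the knots, finishing the finitely many small cases by hand and by SnapPy volume computations. One sub-claim in your step (4) is, however, provably unachievable: no choice of $L$, however asymmetric, can make the Alexander-polynomial exceptional set empty. For any two-component link of linking number one, Lemma~\ref{lem:Baker_Motegi_Alexander} gives $\Delta_{K_1}(t)\doteq\Delta_L(t,t)\doteq\Delta_{J_1}(t)$, and similarly $\Delta_{K_{-1}}\doteq\Delta_{J_{-1}}$ since $\Delta_L(t_1^{-1},t_2^{-1})\doteq\Delta_L(t_1,t_2)$; the paper records exactly this in a footnote. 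So the pairs $(K_{\pm1},J_{\pm1})$ can never be separated by Alexander polynomials, and your fallback (hyperbolic volume, or some other invariant applied to these four specific knots) is not optional but required — this is precisely what the paper does, verifying that $J_{\pm1}$ and $K_{\pm1}$ have four distinct volumes. With that correction your argument goes through and is the paper's argument in all essentials.
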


\begin{proof}
    Let $L=L9a20$ be the $2$-component link shown in Figure~\ref{fig:L9a20}.\footnote{The link $L=L9a20$ was chosen since, in a SnapPy search of the low crossing link census, it was the first link such that $L$ consisted of two unknots of linking number $1$ and such that the associated knots $K_m, J_n$ could be distinguished by the Alexander polynomial for $1<|m|, |n|\leq 20$. The SnapPy computations can be accessed at~\cite{data}.} Since both components are unknots with linking number $l=1$, Lemma~\ref{lem:two_components} implies that for all integers $n,m\in\Z$,
    \begin{align*}
  K_m=\,L\left(*,\frac{1}{m}\right) \textrm{ and }
  J_n=\,L\left(\frac{1}{n},*\right)
 	\end{align*}
 are knots in $S^3$, such that $K_m(-m+\frac{1}{n})\cong J_n(-n+\frac{1}{m})$. To prove the theorem, we will first show that for all integers $n,m\in\Z\setminus\{0\}$ these knots are non-isotopic. For that, we use SnapPy~\cite{SnapPy} to compute the multivariable Alexander polynomial of $L$ to be
\begin{align*}
 \Delta_L(t_1,t_2)= &\,  t_1^2 t_2^4 - 3t_1^2t_2^3 - t_1t_2^4 + 3t_1^2t_2^2 + 4t_1t_2^3 - t_1^2t_2 - 7t_1t_2^2 - t_2^3 \\
 &+ 4t_1t_2 + 3t_2^2 - t_1 - 3t_2 + 1.
\end{align*} 
Lemma~\ref{lem:Baker_Motegi_Alexander} implies that
\begin{align*}
 \Delta_{K_m}(t)\,\dot{=}\,\Delta_L(t,t^m)
 =& \,  t^{4m+2}- 3t^{3m+2}-t^{4m+1} + 3t^{2m+2}
 + 4t^{3m+1}  - t^{m+2}
 \\
 &
 - 7t^{2m+1} - t^{3m} 
 + 4t^{m+1} 
 + 3t^{2m} -t
    - 3t^m + 1\\
    =&~ t^{4m+2}-t^{4m+1}-3t^{3m+2}+4t^{3m+1}-t^{3m}+3t^{2m+2}\\
    & -7t^{2m+1} +3t^{2m}-t^{m+2}+4t^{m+1}-3t^m-t+1,\\
\Delta_{J_n}(t)\,\dot{=}\,\Delta_L(t^n,t)
=&\, t^{2n+4} - 3t^{2n+3}  - t^{n+4} + 3t^{2n+2}  + 4t^{n+3}- t^{2n+1}
  \\
&- 7t^{n+2}  - t^3 + 4t^{n+1}  + 3t^2- t^n
    - 3t + 1\\
    =& ~ t^{2n+4}-3t^{2n+3}+3t^{2n+2}-t^{2n+1}-t^{n+4}+4t^{n+3}\\
    &-7t^{n+2}+4t^{n+1}-t^n-t^3+3t^2-3t+1.
 \end{align*} 
To distinguish all these Alexander polynomials, we first normalise them to have only non-negative exponents and $\Delta(0)>0$. For positive $m$ and $n$ the above formulas are already normalised. For negative $m$ and $n$ the normalised polynomials are:
\begin{align*}
\Delta_{K_m}(t) =& \, -t+3t^{-m+1}+1-3t^{-2m+1}-4t^{-m}+t^{-3m+1} +7t^{-2m}\\ &+t^{-m-1}-4t^{-3m}  -3t^{-2m-1}+t^{-4m}+3t^{-3m-1}-t^{-4m-1} \\
=&~ t^{-4m}-t^{-4m-1}+t^{-3m+1}-4t^{-3m}+3t^{-3m-1}-3t^{-2m+1}\\
&  7t^{-2m}-3t^{-2m-1}+3t^{-m+1}-4t^{-m}+t^{-m-1}-t+1,\\
\Delta_{J_n}(t) = &\, -t^{3} + 3t^{2}  + t^{-n+3} - 3t  - 4t^{-n+2}+ 1+ 7t^{-n+1}  + t^{-2n+2} \\ &- 4t^{-n}  - 3t^{-2n+1}+ t^{-n-1}
    + 3t^{-2n} - t^{-2n-1}\\
    =&~ t^{-2n+2}-3t^{-2n+1}+3t^{-2n}-t^{-2n-1}+t^{-n+3}-4t^{-n+2}\\ 
    &+7t^{-n+1} -4t^{-n}+t^{-n-1}-t^3+3t^2-3t+1.
\end{align*}
 Note that for $|m|\geq 3$ and for $n\geq 4$ or $n\leq -5$, the latter normalised expressions we give for each of these polynomials have monotonically decreasing exponents. In particular, for $|m|\geq 3$, the coefficient of the term of the normalised $\Delta_{K_m}(t)$ of the second largest exponent is $-1$. On the other hand, for $n\geq 4$ or $n\leq -5$, the coefficient of the term of the normalised $\Delta_{J_n}(t)$ is $-3$. Hence, in this case, the knots $K_m$ and $J_n$ are distinct.
 
 If $|m|<3$, then some exponents in the above expression for $\Delta_{K_m}(t)$ agree and hence $\Delta_{K_m}(t)$ has fewer than 13 distinct exponents. Hence $K_m$ with $|m|<3$ and $J_n$ with $n\geq 4$ or $n\leq -5$ are distinct. Similarly $K_m$ with $|m|\geq 4$ and $J_n$ with $-5<n<4$ are distinct. 
 
 Hence, it is sufficient to show that the above normalised expressions for $\Delta_{K_m}(t)$ and for $\Delta_{J_n}(t)$ with $|m|<3$ and $-5<n<4$ are distinct. These are finitely many pairs which can be distinguished by hand. Indeed, the Alexander polynomials for these 13 knots are all distinct apart from $\Delta_{J_1}(t)\dot=\Delta_{K_1}(t)$ and $\Delta_{J_{-1}}(t)\dot=\Delta_{K_{-1}}(t).$\footnote{Note that this holds true for any link $L$ with linking number $1$ by Lemma~\ref{lem:Baker_Motegi_Alexander}.}
 
 Finally, we may use SnapPy~\cite{SnapPy} to check that the four knots $J_{-1}$, $K_{-1}$, $J_1$, and $K_1$ have different volumes and are therefore non-isotopic. In fact, $J_1$ is the $10$-crossing knot $K10n23$, $J_{-1}$ the 11-crossing knot $K11n86$, while experiments with SnapPy suggest that $K_1$ has crossing number $19$, and $K_{-1}$ crossing number $22$. These computations can be accessed at~\cite{data}. 
\end{proof}

\begin{figure}[htbp]
    \centering
    \includegraphics[width=0.99\linewidth]{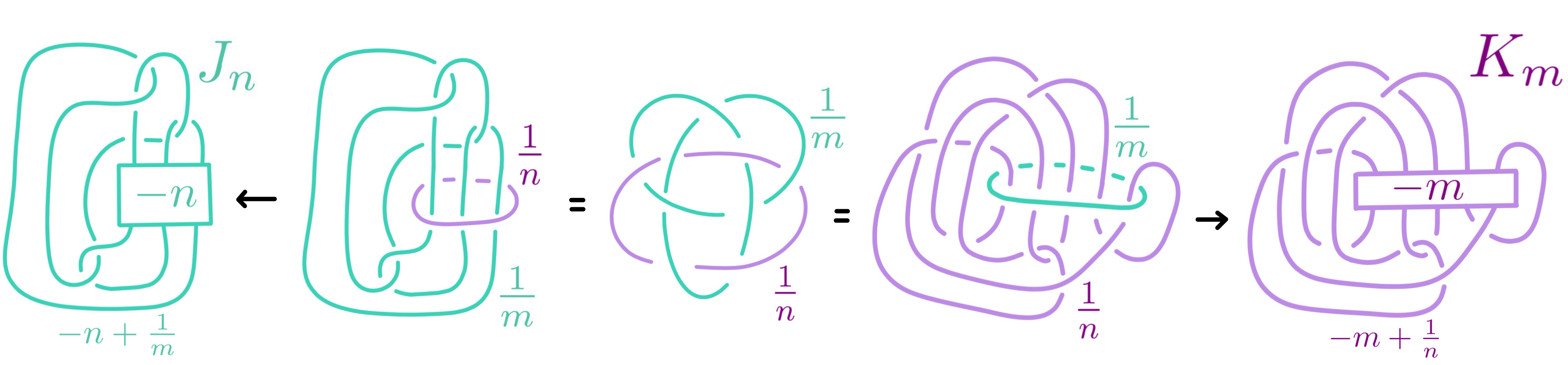} 
    \caption{Three diagrams for the link $L9a20$ (middle), the families $J_n$ (left), and $K_m$ (right).}
    \label{fig:L9a20}
\end{figure}

\begin{rem}
    Note that in Theorem~\ref{thm:all_slopes_precise} we assume $m,n\neq 0$. However, in Example~\ref{ex:trefoil_fig8} we have shown that every reciprocal integer slope is non-strongly-characterising for the figure eight knot. Moreover, there are strong results about non-characterising slopes of the form $1/n$. Using surgeries on Brunnian links, for every pair of integers $n\in\Z$ and $k\in\N$ Livingston~\cite{Livingston_surgery} has constructed  pairwise non-isotopic knots $K_1,\ldots,K_k$ that all have diffeomorphic $(1/n)$-surgeries. 
\end{rem}

\section{Strongly-characterising slopes - the hyperbolic case}\label{section:hyperbolic_knots}
In this section, we prove that all hyperbolic knots have infinitely many strongly-characterising slopes. The more general result that any non-trivial knot has infinitely many strongly-characterising slopes is proven in Section~\ref{section:general_knots}. However, the proof in the hyperbolic case is particularly short and mostly self-contained, motivating its independent and more elementary exposition. Let us review the two main tools used in the upcoming proof, namely hyperbolic geometry and linking forms of rational homology spheres.

\subsection{Hyperbolic 3-manifolds} A 3-manifold is {\it hyperbolic} if its interior admits a complete, finite volume Riemannian metric of constant sectional curvatures $-1$. For an introduction to hyperbolic 3-manifolds, see \cite{Martelli_book}. A knot $K\subset S^3$ is {\it hyperbolic} if $S^3-K$ is hyperbolic. Let us give an overview of some hyperbolic geometry used to study strongly-characterising slopes. First of all, the Mostow rigidity theorem \cite{Mostow_rigidity} states that a hyperbolic 3-manifold has a unique hyperbolic metric up to isometry. It is a theorem of Thurston \cite{Thurston_Dehn_Surgery} that being hyperbolic is preserved under sufficiently complicated Dehn fillings. Moreover, for $K$ hyperbolic and $|q|$ sufficiently large, $K(p/q)$ is hyperbolic and the core curve of the Dehn filling is the unique shortest geodesic in $K(p/q)$. Hence, if $K, K'$ are hyperbolic, $K(p/q)\cong  K'(p/q')$, and $|q|, |q'|$ are sufficiently large, then the core curves of the Dehn fillings are both the unique shortest geodesic and thus equivalent. This in turn, implies that $S^3-K\cong  S^3-K'$. We will apply the following result of Lackenby. 

\begin{thm}[Lackenby, Theorem 1.3 \cite{Lackenby_char}] \label{thm:lackenby_char_slope}  Given a hyperbolic knot $K\subset S^3$, there is a constant $C(K)$ with the following property: If there is an orientation-preserving diffeomorphism between $K(p/q)$ and $K'(p'/q')$ for some hyperbolic knot $K'\subset S^3$ and $|q'|\geq C(K)$, then there $K=K'$ and $p/q=p'/q'$. \qed
\end{thm}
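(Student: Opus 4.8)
The plan is to run hyperbolic Dehn filling on both sides of a hypothetical orientation-preserving diffeomorphism $M:=K(p/q)\cong K'(p'/q')$ and, in the essential case, to reduce the statement to the Gordon--Luecke knot complement theorem applied to the complements of the core geodesics of the two fillings. Throughout, write $L_0$ for the universal normalised-length threshold (Thurston's hyperbolic Dehn surgery theorem together with the quantitative refinement of Hodgson--Kerckhoff) above which a Dehn filling of a cusped hyperbolic $3$-manifold is hyperbolic, the core geodesic of the filling solid torus is short, and this core length tends to $0$ as the normalised length tends to $\infty$. The discussion preceding the statement already disposes of the symmetric situation in which both $|q|$ and $|q'|$ are large; the real content is to drop the hypothesis on $|q|$, and to be careful that ``the core is the unique shortest geodesic'' is only available for a fixed knot.

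First I would establish a lower bound, uniform over all hyperbolic knots $K'$, for the normalised length of the slope $p'\mu'+q'\lambda'$ on the maximal cusp of $S^3\setminus K'$. Since meridional filling of $S^3\setminus K'$ yields $S^3$, which is not hyperbolic, the $6$-theorem forces $\ell(\mu')\le 6$ on the maximal cusp; combining this with the universal positive lower bound $c_0$ for the area of a maximal cusp of a one-cusped hyperbolic $3$-manifold and with the elementary observation that every nonzero vector of the coset $q'\lambda'+\Z\mu'$ has length at least $|q'|\cdot(\text{cusp area})/\ell(\mu')$, one obtains that the normalised length of $p'\mu'+q'\lambda'$ is at least $|q'|\sqrt{c_0}/6$. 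Consequently, once $C(K)$ is chosen with $C(K)\sqrt{c_0}/6\ge L_0$, the manifold $M=K'(p'/q')$ is hyperbolic and the core geodesic $\gamma'$ of this filling has length at most some $\delta(|q'|)$ with $\delta(|q'|)\to 0$; after enlarging $C(K)$ further, $\gamma'$ is shorter than the $3$-dimensional Margulis constant and than $\varepsilon_K:=\tfrac{1}{2}\operatorname{sys}(S^3\setminus K)$.

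Next I would split into two cases according to the normalised length $\widehat\ell_K(p\mu+q\lambda)$ of the surgery slope on $S^3\setminus K$, relative to a threshold $L_0(K)\ge L_0$. If $\widehat\ell_K(p\mu+q\lambda)<L_0(K)$, then $p/q$ lies in a finite set of slopes (on a fixed cusp the normalised length of a slope tends to $\infty$ as the slope does), so $M$ belongs to a finite list of $3$-manifolds; for each hyperbolic member there are only finitely many closed geodesics below the Margulis constant, and a given closed geodesic $\eta$ of $M$ determines at most one triple $(K'',p'',q'')$ with $M=K''(p''/q'')$ whose core is isotopic to $\eta$ (drilling $\eta$ gives $S^3\setminus K''$, and the filling slope is the meridian of the tube around $\eta$), so only finitely many values of $|q''|$ arise; enlarging $C(K)$ beyond all of them rules out this case. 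If instead $\widehat\ell_K(p\mu+q\lambda)\ge L_0(K)$, then by Thurston's theorem together with the geometric convergence $K(\sigma)\to S^3\setminus K$ as $\widehat\ell_K(\sigma)\to\infty$, one may choose $L_0(K)$ so that the core geodesic $\gamma$ of $K(p/q)$ is the \emph{unique} closed geodesic of $M$ of length $<\varepsilon_K$; since $\gamma'$ is also such a geodesic, $\gamma=\gamma'$ up to isotopy in $M$. Hence $S^3\setminus K=M\setminus\gamma=M\setminus\gamma'=S^3\setminus K'$, so $K=K'$ by Gordon--Luecke; and since the identification can be taken to carry the solid torus of one filling to that of the other, it restricts to a self-diffeomorphism of $S^3\setminus K$ carrying $p'\mu'+q'\lambda'$ to $p\mu+q\lambda$, which is homologically standard ($\mu\mapsto\pm\mu$, $\lambda\mapsto\pm\lambda$) and therefore fixes the slope, giving $p/q=p'/q'$. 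Taking $C(K)$ to be the largest of the finitely many bounds imposed above completes the argument.

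I expect the main obstacle to be the uniformity of the hyperbolic Dehn-filling estimates over the varying knot $K'$: one needs the bound $|q'|\sqrt{c_0}/6$ to exceed the Hodgson--Kerckhoff threshold and to force $M=K'(p'/q')$ to be hyperbolic with a controlled short core, with every constant independent of $K'$, and this rests on the $6$-theorem bound for the meridian length, the universal cusp-area bound, and quantitative hyperbolic Dehn surgery. A related subtlety — and the reason the case analysis is organised around the fixed knot $K$ rather than around $K'$ — is that one cannot guarantee with a universal constant that the core of a long filling is the \emph{globally} shortest geodesic, since the cusped manifold being filled may itself carry an even shorter geodesic; one therefore leans on geometric convergence, which pins down the core of $K(p/q)$ as the shortest geodesic for that particular $K$, and disposes of the finitely many exceptional $M=K(p/q)$ by the counting argument above.
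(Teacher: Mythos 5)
This is a result the paper imports from Lackenby (Theorem 1.3 of \cite{Lackenby_char}) and does not prove, so there is no internal proof to compare against; what you have written is a reconstruction, and it is essentially correct. Your route is the same one the paper uses for its own analogous statements in Section~\ref{section:general_knots}: a normalised-length lower bound of the form $\hat{L}(p'/q')\geq \mathrm{const}\cdot|q'|$ coming from the $6$-theorem (your cusp-area bound $c_0$ plays the role the paper's Lemma~\ref{lemma:length_bound} handles by a $\max$ over two estimates), then Hodgson--Kerckhoff/Futer--Purcell--Schleimer to make the core of the $K'$-filling a short geodesic, Mostow rigidity plus \cite{Gabai_Meyerhoff_Thurston_Isotopy_Hyperbolic} to match cores, and Gordon--Luecke to recover the knot and the slope (cf.\ Theorem~\ref{thm:Dehn_surgery_bounds} and Corollary~\ref{cor:hyp_Dehn_surgery}). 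The genuinely valuable point you add, which the paper's informal two-sided sketch before the theorem does not address, is the asymmetry of the hypothesis: only $|q'|$ is assumed large, and your Case~A --- finitely many slopes $p/q$ of bounded normalised length, finitely many Margulis-short geodesics in each resulting manifold, each drilling to at most one surgery description and hence contributing only finitely many values of $|q''|$ --- is the correct way to dispose of it. Two small points to tidy: take $\varepsilon_K=\min\{0.0735,\tfrac12\operatorname{sys}(S^3\setminus K)\}$ rather than $\tfrac12\operatorname{sys}$ alone, since the quantitative statement (third bullet of Theorem~\ref{thm:Dehn_surgery_bounds}) caps the non-core geodesic length bound at $0.0735$; and in Case~A you should say explicitly that distinct closed geodesics are never isotopic, so the short core $\gamma'$ literally coincides with one of the finitely many listed geodesics, which is what makes the bound on $|q''|$ apply to $|q'|$.
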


We also need the following well-known result, a proof of which can be found in Section~\ref{section:general_knots}. 

\begin{lem}\label{lemma:hyp_reduction} If $K'$ is a non-hyperbolic knot with $K'(p/q')$ hyperbolic for $|q'|>2$, then there is a hyperbolic knot $K''$ and $w>1$ such that $K''(p/(q'\cdot w^2))\cong  K'(p/q')$. 
\end{lem}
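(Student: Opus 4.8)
The statement claims: if $K'$ is a non-hyperbolic knot and the surgery $K'(p/q')$ is hyperbolic for some $|q'|>2$, then there is a hyperbolic knot $K''$ and an integer $w>1$ with $K''(p/(q' w^2))\cong K'(p/q')$. The natural approach is to analyze the JSJ-decomposition of the exterior $E(K')=S^3\setminus N(K')$. Since $K'$ is non-trivial (a surgery on it is hyperbolic, so $E(K')$ is not a solid torus) and non-hyperbolic, $E(K')$ has a nontrivial JSJ-decomposition, and the piece containing $\partial E(K')$ is either Seifert fibered or hyperbolic. If $K'$ were a satellite with a hyperbolic outermost piece, or a torus knot, then performing $p/q'$-surgery glues a solid torus to $\partial E(K')$ along the slope $p\mu+q'\lambda$; because the JSJ-torus adjacent to $\partial E(K')$ separates, and the surgery only changes the manifold near the boundary, the resulting manifold $K'(p/q')$ still contains an essential torus (the JSJ torus survives) — unless the surgery is along the cabling slope, but $|q'|>2$ should prevent that for a cable (cabling slopes are integral). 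Hence $K'(p/q')$ would be toroidal, contradicting hyperbolicity. So the only way $K'(p/q')$ can be hyperbolic is if the piece of the JSJ-decomposition adjacent to $\partial E(K')$ is \emph{Seifert fibered over a disk with (at least) two exceptional fibers} — i.e.\ $K'$ is a cable knot, $K'=C_{a,b}(K'')$ for some knot $K''$ and the surgered solid torus absorbs that Seifert piece.

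Concretely, I would argue: the outermost JSJ piece of $E(C_{a,b}(K''))$ is $S^1\times D^2$ minus a torus-knot neighborhood, i.e.\ the cable space, glued to $E(K'')$ along the boundary of a tubular neighborhood of $K''$. When we do $p/q'$-surgery on $C_{a,b}(K'')$ with $|q'|>2$, the slope is \emph{not} the cabling slope $ab$ (which is an integer, hence has $q'=1$), so by Gordon's analysis of surgery on satellite/cable knots \cite{Gordon_satellite} the Dehn filling of the cable space along $p\mu+q'\lambda$ is again a Seifert fibered space over a disk — but now with only one exceptional fiber, i.e.\ a solid torus — \emph{precisely when} $|q'\cdot ab - p| = 1$; more generally one uses that filling the cable space along a non-cabling slope yields a solid torus iff the intersection number condition holds, and this is exactly the content of Lemma~\ref{lemma:cable_filling}. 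But wait — hyperbolicity of $K'(p/q')$ forces this filling of the cable space to be a solid torus (otherwise the JSJ torus is still essential and the result is toroidal), hence $|q'\cdot ab-p|=1$, and then Lemma~\ref{lemma:cable_filling} gives $C_{a,b}(K'')(p/q')\cong K''(p/(q' b^2))$ with $w=|b|>1$ since a cable has $|b|\geq 2$. If $K''$ is hyperbolic we are done. If not, we induct: $K''$ is again a non-hyperbolic knot and $K''(p/(q'b^2))$ is hyperbolic with $|q' b^2|>2$, so we repeat. The JSJ-complexity strictly drops, so the induction terminates with a hyperbolic $K''$, and the product of the successive $b$'s gives the final $w>1$.

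**The main obstacle.** The crux is the case analysis on the outermost JSJ piece and carefully justifying that \emph{only} the cable case survives — equivalently, that surgery along a slope with $|q'|>2$ on a torus knot or on a satellite with hyperbolic outermost piece cannot be hyperbolic. For torus knots $T(a,b)$ one can invoke Moser's classification \cite{Moser}: every surgery on a torus knot is Seifert fibered (hence never hyperbolic), so that subcase is immediate. For a satellite $K'$ whose companion torus $T$ is the JSJ torus: surgery on $K'$ leaves $T$ embedded; $T$ compresses in $K'(p/q')$ only if it bounds a solid torus on the filling side, which happens iff the filling of the (hyperbolic or Seifert, non-cable-space) outermost piece is a solid torus — and one must rule this out, which for a \emph{hyperbolic} outermost piece would require the surgery slope to be an exceptional (non-hyperbolic) slope for that piece, but even then the filling is rarely a solid torus, and when the outermost piece is Seifert with $\geq 2$ exceptional fibers that is by definition the cable case. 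So the real work is an honest but standard JSJ/Seifert-fibered-surgery case-check; I expect the subtlety to lie in handling composite $K'$ (where the "outermost piece" is a connected sum decomposition: but then $K'(p/q')$ is a nontrivial connected sum minus a ball, never hyperbolic unless $q'=\pm1$ — ruled out by $|q'|>2$ actually we need $|q'|>1$, fine) and in making sure the induction's complexity argument (e.g.\ number of JSJ tori, or genus via $g(C_{a,b}(K''))>g(K'')$) is genuinely monotone. This is exactly where I would be most careful in writing the full proof.
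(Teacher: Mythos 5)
Your proposal follows essentially the same route as the paper: analyse the JSJ-decomposition of $S^3\setminus N(K')$, show that hyperbolicity of the filling forces $K'$ to be a cable $C_{r,s}(K'')$ with $|q'rs-p|=1$, and then apply Gordon's formula (Lemma~\ref{lemma:cable_filling}) to get $K'(p/q')\cong K''(p/(q's^2))$ with $w=s>1$. The one place where your argument is only a gesture is exactly the step you flag: ruling out that the filled outermost piece becomes a solid torus (so that the adjacent JSJ torus compresses) when that piece is hyperbolic with at least two boundary components, and ruling out exceptional behaviour of such fillings for $|q'|>2$ more generally. ``The filling is rarely a solid torus'' is not an argument; this requires genuine input (Gabai's classification of knots in solid tori with solid-torus surgeries, or bounds on distances between the meridian and exceptional slopes). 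The paper sidesteps all of this by quoting Sorya's results, stated here as Theorems~\ref{thm:knot_JSJ_filling_component} and~\ref{thm:JSJ_filling}: for $|q'|>2$ the JSJ-decomposition of $K'(p/q')$ is either $Y'_0(p/q')\cup Y'_1\cup\ldots\cup Y'_k$ or, precisely in the cable case with $|q'rs-p|=1$, equal to $Y'_1(p/(q's^2))\cup Y'_2\cup\ldots\cup Y'_k$. Citing that theorem closes your gap, and it also makes your induction unnecessary: since a hyperbolic manifold has trivial JSJ-decomposition, the second alternative immediately forces $k=1$ and $Y'_1\cong S^3\setminus N(K'')$ to be hyperbolic (it cannot be a torus knot exterior, whose fillings are Seifert fibred or reducible by Moser), so a single cabling step suffices and $w=s$. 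Your fallback induction on the number of JSJ pieces would also terminate correctly, but you would still need the Sorya/Gabai input at each stage, so it buys nothing over the direct argument.
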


\subsection{Linking forms}

The second tool we use is the linking form.  

\begin{defi}
    Consider a closed, rational homology $3$-sphere $M$, the {\it linking form}$$\lambda_M\colon H_1(M; \mathbb Z)\times H_1(M; \mathbb Z)\to \mathbb Q/\mathbb Z$$ is defined as follows. For $a,b\in H_1(M; \mathbb Z)$ take $\alpha, \beta\in C_1(M; \mathbb Z)$ with $[\alpha]=a, [\beta]=b$. Since $H_1(M; \mathbb Z)$ is finite, there exists $N\in \mathbb Z$ and $\sigma\in C_2(M; \mathbb Z)$ such that $N\cdot \alpha=\partial \sigma$. Then we define $$\lambda_M(a,b):=\frac{\langle \sigma, \beta\rangle}{N}$$ where $\langle \sigma, \beta\rangle\in \mathbb Z$ denotes the intersection form of $M$.
    \end{defi}

In this definition $\lambda_M$ is well-defined as the value of $\langle \sigma, \beta\rangle/N\in\Q/\Z$ is independent of the choices of $\alpha, \beta, N$, and $\sigma$.\\

Recall that for an oriented knot $K\subset S^3$, the Dehn filling $K(p/q)$ has homology $H_1(K(p/q); \mathbb Z)$ canonically isomorphic to $\mathbb Z/p\mathbb Z$ with generator $1+p\mathbb Z$ given by the oriented meridian. Then the linking form on $K(p/q)$ is computed as follows.

\begin{prop}\label{prop:linking_form_formula}
    Given an oriented knot $K\subset S^3$ and a slope $p/q\in \mathbb Q\setminus\{0\}$, the linking form on $K(p/q)$ satisfies $\lambda_{K(p/q)}(a,b)=-\frac qp \cdot a\cdot b$ under the canonical isomorphism $H_1(K(p/q); \mathbb Z)\cong\mathbb Z/p\mathbb Z$.
\end{prop}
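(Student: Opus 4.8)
The plan is to present a surgery description of $K(p/q)$ that realizes the linking form explicitly and then invoke the characterization of the linking form via the linking matrix of a surgery diagram. Recall that $K(p/q)$ can be obtained by integral surgery on a chain of knots: the Dehn filling along $p\mu+q\lambda$ equals the result of doing $0$-surgery on $K$ together with a string of framed unknots encoding the continued fraction expansion of $p/q$, or, more directly for the present purpose, by performing $(p/q)$-surgery on $K$ and comparing it to the corresponding integer-surgery link $K \cup U$, where $U$ is a meridional unknot of $K$ with an appropriate framing. Concretely, $K(p/q)$ is the boundary of the $4$-manifold obtained by attaching $2$-handles along $K$ with framing $0$ and along a meridian $\mu$ of $K$ with framing $-p/q$ smoothed out via a Rolfsen twist; better still, for the homology computation one only needs that $K(p/q)$ bounds a $4$-manifold $W$ whose intersection form, in a basis coming from a surgery link, is presented by a matrix whose determinant is $\pm p$ and which, after diagonalization, records $-q/p$ on the meridian class.

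The cleanest route: since the linking form only depends on $H_1$ together with the intersection data of a bounding $4$-manifold, and since homologically $K(p/q)$ is indistinguishable from $U(p/q)$ where $U$ is the unknot — indeed $K(0)$ and $U(0)$ have the same homology and the meridian of $K$ maps to the meridian of $U$ — I would first reduce to the case $K = U$. More precisely, one observes that $K(p/q)$ bounds $W_K$ obtained by attaching a single $2$-handle along $K$ with framing $p/q$ (made integral by blowing up, i.e. by the standard two-handle picture for rational surgery), and that the algebraic intersection form on $H_2(W_K;\mathbb Z)$ agrees with the one for the unknot because the self-linking data $\mathrm{lk}(K,K) = p/q$ in the framed sense and the relevant linking matrix entries are insensitive to the knotting of $K$ (the linking number of $K$ with its meridian is $1$ regardless). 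Then for the unknot $U(p/q) = L(p,q)$ is a lens space whose linking form is the classical $-q/p$, which settles the claim; this is the computation $\lambda_{K(p/q)}(a,b) = -\tfrac{q}{p}\, a\cdot b$ after identifying $H_1 \cong \mathbb Z/p\mathbb Z$ via the oriented meridian.

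I expect the main obstacle to be bookkeeping the orientation and sign conventions: one must check that the generator $1 + p\mathbb Z \in \mathbb Z/p\mathbb Z$ coming from the oriented meridian of $K$ corresponds under the bounding $4$-manifold to the core of the $2$-handle in the way that produces the sign $-q/p$ rather than $+q/p$ or $-p/q$, and that the chosen right-hand conventions for $\mu,\lambda$ and for the intersection form $\langle\sigma,\beta\rangle$ are mutually compatible. A careful way to pin this down without any reduction to lens spaces is to work directly from the definition of $\lambda_M$: take $\alpha$ a push-off of the meridian $\mu$ into $K(p/q)$, so $a = [\alpha]$ generates $H_1$; since $p\mu + q\lambda$ bounds the meridian disk in the Dehn-filling solid torus, the $1$-cycle $p\,\mu$ is homologous in $K(p/q)$ to $-q\,\lambda$, and $\lambda$ bounds a Seifert surface $\Sigma$ in $S^3 \setminus N(K)$; hence $p\,\alpha = \partial\sigma$ with $\sigma = -q\,\Sigma$ union the filling disk, and $\langle \sigma, \mu\rangle = -q\,(\Sigma\cdot \mu) = -q$ since the Seifert surface meets the meridian once positively. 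Therefore $\lambda_{K(p/q)}(a,a) = \langle\sigma,\mu\rangle / p = -q/p$, and bilinearity gives the general formula. The only delicate point is confirming the intersection number $\Sigma \cdot \mu = +1$ with the stated orientation conventions, which is exactly the standard fact that a Seifert longitude and a meridian have linking number $1$.
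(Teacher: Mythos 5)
Your final paragraph is exactly the paper's proof: $p\mu+q\lambda$ bounds the meridian disc of the filling torus, $\lambda$ bounds a Seifert surface $\Sigma$, so $p\mu=\partial(D-q\Sigma)$ and $\lambda_{K(p/q)}([\mu],[\mu])=\langle -q\Sigma,\mu\rangle/p=-q/p$, with bilinearity giving the general case. The first two paragraphs (reduction to lens spaces via $4$-manifold intersection forms) are a dispensable detour that you rightly abandon; the direct computation you end with is complete and is the argument the paper gives.
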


\begin{proof}
Let $\mu, \lambda\subset \partial N(K)$ denote the meridian and the longitude of $K$ (oriented by the standard right-hand-rule convention). 
Then the slope $p\mu+q\lambda$ bounds a disc in $K(p/q)$ and $\lambda$ is the boundary of the Seifert surface $\Sigma_K\subset S^3 \setminus N(K)$. Hence $p\cdot \mu=-q\cdot \partial \Sigma_K\in C_1(K(p/q); \mathbb Z).$ By definition of the linking form, we get 
\begin{equation*}
     \lambda_M([\mu], [\mu])=-\frac{q}{p}\langle \mu, \Sigma_K\rangle=-\frac{q}{p}.\qedhere
\end{equation*}
\end{proof}

\begin{rem}
One can equivalently define the linking form on rational homology 3-spheres via the cap product and Poincar\'e duality. A careful exposition of this appears in \cite{Linking_forms_alg_top}. Let us briefly mention this alternative definition.
$\lambda_M(a,b)=\Omega(a)(b)$ for $\Omega$ the composition: 
$$H_1(M; \mathbb Z)\xrightarrow[]{PD}H^2(M; \mathbb Z)\xrightarrow[]{\beta^{-1}}H^1(M; \mathbb Q/\mathbb Z)\xrightarrow[]{\varphi\mapsto \langle \varphi, -\rangle_{Kro}} \text{Hom}(H_1(M; \mathbb Z); \mathbb Q/\mathbb Z).$$
Here $PD$ denotes Poincar\'e duality, $\beta$ the Bockstein homomorphism, and $\langle ~ , ~\rangle_{Kro}$ the Kronecker pairing. 
\end{rem}

We summarise the discussion on linking forms with the following result.

\begin{prop} \label{prop:linking_framing} Let $K, K'\subset S^3$ be knots with slopes $p/q, p'/q'\in \mathbb Q$, for $p,p'>0$. If $K(p/q)\cong  K'(p'/q')$ then $p=p'$ and $q\cdot q'\equiv n^2 \pmod p$ for some $n\in \mathbb Z$. 
\end{prop}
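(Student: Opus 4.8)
The plan is to extract two pieces of information from the orientation-preserving diffeomorphism $K(p/q)\cong K'(p'/q')$: the order of first homology, and the linking form. The first homology of $K(p/q)$ is $\Z/p\Z$ (with $p>0$), and likewise $H_1(K'(p'/q'))\cong\Z/p'\Z$; since diffeomorphic manifolds have isomorphic homology groups and we take $p,p'>0$, this immediately forces $p=p'$. This handles the easy half of the statement with no real work.

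For the congruence $q\cdot q'\equiv n^2\pmod p$, the idea is to compare linking forms. First I would note that an orientation-preserving diffeomorphism $\phi\colon K(p/q)\to K'(p'/q')$ induces an isomorphism $\phi_*\colon H_1(K(p/q);\Z)\to H_1(K'(p'/q');\Z)$ which is an isometry of the linking forms, i.e.\ $\lambda_{K'(p'/q')}(\phi_*a,\phi_*b)=\lambda_{K(p/q)}(a,b)$ — this is because the linking form is defined purely in terms of the chain complex and intersection pairing, both of which are preserved by an orientation-preserving diffeomorphism. (If the paper has a sign convention issue, the point is that orientation-preserving maps send $\lambda$ to $\lambda$, not to $-\lambda$.) Now identify both groups with $\Z/p\Z$ via the canonical meridian generators, so $\phi_*$ is multiplication by some unit $n\in(\Z/p\Z)^\times$, i.e.\ $\gcd(n,p)=1$. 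By Proposition~\ref{prop:linking_form_formula}, $\lambda_{K(p/q)}(1,1)=-q/p$ and $\lambda_{K'(p'/q')}(1,1)=-q'/p$ in $\Q/\Z$. Applying the isometry with $a=b=1$ gives
\begin{equation*}
-\frac{q}{p}=\lambda_{K(p/q)}(1,1)=\lambda_{K'(p'/q')}(\phi_*1,\phi_*1)=\lambda_{K'(p'/q')}(n,n)=-\frac{q'}{p}\cdot n^2 \in\Q/\Z,
\end{equation*}
which says exactly that $q\equiv q'n^2\pmod p$. Multiplying by the inverse of $q'$ modulo $p$ if desired (or just leaving it in the symmetric form $qq'\equiv (q'n)^2\equiv m^2\pmod p$ after absorbing $q'$), we get the claimed statement; note one has to be slightly careful that $q,q'$ are coprime to $p$ so that the relevant elements are units, which holds because $p/q$ and $p'/q'$ are genuine slopes in lowest terms and $p\neq 0$.

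The main obstacle — really the only subtle point — is making sure the linking form is transported by an orientation-preserving diffeomorphism \emph{without a sign}, and that the canonical identification $H_1(K(p/q);\Z)\cong\Z/p\Z$ is compatible with this. One must check that the generator $1+p\Z$ coming from the oriented meridian is natural enough that $\phi_*$ becomes multiplication by a single well-defined unit $n\bmod p$; since $\phi_*$ is an arbitrary group automorphism of $\Z/p\Z$, it is automatically multiplication by a unit, so the only content is the linking-form naturality, which is a standard fact about the bordism/chain-level definition given just above. Everything else is bookkeeping with units in $\Z/p\Z$.
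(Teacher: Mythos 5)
Your proposal is correct and follows essentially the same route as the paper: obtain $p=p'$ from $|H_1|$, then use the naturality of the linking form under the orientation-preserving diffeomorphism together with Proposition~\ref{prop:linking_form_formula} to get $q\equiv q'n^2\pmod p$ and hence $q\cdot q'\equiv (q'n)^2\pmod p$. The extra care you take about sign conventions and the unit $n$ being well-defined matches what the paper leaves implicit.
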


\begin{proof}
An orientation preserving diffeomorphism $f\colon K(p/q)\to K'(p'/q')$ induces an isomorphism $f_*\colon H_1(K(p/q); \mathbb Z)\to H_1(K'(p'/q'); \mathbb Z)$. Hence $p=p'$.\\
Moreover, the linking forms must satisfy $\lambda_{K(p/q)}(a,b)=\lambda_{K'(p/q')}(f_*(a), f_*(b))$. Under the identifications of $H_1(K(p/q); \mathbb Z)$ and $H_1(K'(p/q'); \mathbb Z)$ with $\Z/p\Z$ given by the two respective Dehn fillings, we know that 
\begin{itemize} 
\item[-] $f_*$ has the form $x \pmod p \mapsto r\cdot x \pmod p$ for some $r\in \mathbb Z$, 
\item[-] $\lambda_{K(p/q)}(a,b)=-\frac qp\cdot a\cdot b$ and $\lambda_{K'(p/q')}(a,b)=-\frac{q'}p\cdot a\cdot b$.
\end{itemize}
Hence, for some $r\in \mathbb Z$ the maps $\mathbb Z/p\mathbb Z\times \mathbb Z/p\mathbb Z\to \mathbb Q/\mathbb Z, (a,b)\mapsto  -\frac{q}{p}\cdot a\cdot b$ and $\mathbb Z/p\mathbb Z\times \mathbb Z/p\mathbb Z\to \mathbb Q/\mathbb Z, (a,b)\mapsto -\frac{q'}{p}\cdot (r\cdot a)\cdot (r\cdot b)$ are equal. In particular, this implies $q\equiv  q'\cdot r^2 \pmod p$ and hence $q\cdot q'\equiv (q'\cdot r)^2 \pmod p$. 
\end{proof}

\subsection{Quadratic reciprocity} In light of the previous theorem, let us recall some results on quadratic reciprocity. The classical reference for this topic is \cite{Gauss_original}. Given a prime $p$ and $q$ not a multiple of $p$, we say $q$ is a {\it (quadratic) residue mod} $p$ if $q\equiv n^2 \pmod p$ for some $n\in \mathbb Z$. For an odd prime $p$, $(p-1)/2$ of the numbers $1, \ldots, p-1$ are residues and the remaining $(p-1)/2$ are non-residues. A product of residues is clearly again a residue mod $p$, and hence the product of a residue and a non-residue is a non-residue. Let us recall the {\it Legendre symbol} $$\left(\frac{q}{p}\right):=\begin{cases} 1  &\text{if }q\text{ is a residue mod }p,\\ -1  &\text{if }q\text{ is a non-residue mod } p.\end{cases}$$
The law of quadratic reciprocity states that for distinct odd primes $p_1, p_2$ we have $$\left(\frac{p_1}{p_2}\right)\left(\frac{p_2}{p_1}\right)=(-1)^{\frac{p_1-1}{2}\frac{p_2-1}{2}}.$$
Finally, it is known that $-1$ is a quadratic residue mod an odd prime $p$ if and only if $p\equiv 1 \pmod 4$ and $2$ is a quadratic residue mod an odd prime $p$ if and only if $p\equiv \pm 1 \pmod 8$.\\

We are now ready to prove the main result of this section. 

\subsection{Existence of strongly-characterising slopes}
\begin{thm}  Any hyperbolic knot $K\subset S^3$ has infinitely many strongly-character\-is\-ing slopes.
\end{thm}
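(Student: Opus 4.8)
The plan is to produce the strongly-characterising slopes in the form $p/q$, with $p$ a large prime and $q$ a quadratic non-residue modulo $p$, chosen so that the linking-form obstruction of Proposition~\ref{prop:linking_framing} rigidly constrains any competing surgery description, and then to invoke Lackenby's Theorem~\ref{thm:lackenby_char_slope} to identify the knot. Set $C=\max\{C(K),3\}$ with $C(K)$ as in Theorem~\ref{thm:lackenby_char_slope}. First I would choose a prime $p>2C$ with $p\equiv 1\pmod 8$ and with $p\equiv 1\pmod\ell$ for every odd prime $\ell<C$; there are infinitely many such primes (by Dirichlet's theorem). For any such $p$ a short computation with quadratic reciprocity shows that every nonzero integer $k$ with $|k|<C$ is a quadratic residue modulo $p$: the condition $p\equiv 1\pmod 4$ removes the reciprocity sign, $p\equiv 1\pmod 8$ makes $2$ a residue, and $p\equiv 1\pmod\ell$ makes each odd prime $\ell<C$ a residue, so every prime factor of $k$, and hence $k$ itself, is a residue (using that $-1$ is a residue). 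Next I would choose $q\in\{1,\dots,p-1\}$ to be a \emph{squarefree} quadratic non-residue modulo $p$ with $K(p/q)$ hyperbolic; for $p$ large such $q$ exist, since the squarefree non-residues in $\{1,\dots,p-1\}$ have positive density and therefore outnumber the finitely many exceptional slopes of $K$.

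Now I would run the rigidity step. Suppose $K(p/q)\cong K'(p'/q')$ for some knot $K'\subset S^3$ and slope $p'/q'$ with $p'>0$. By Proposition~\ref{prop:linking_framing}, $p'=p$ and $q\,q'\equiv n^2\pmod p$ for some $n\in\Z$. As $\gcd(p,q')=1$ we have $n\not\equiv 0\pmod p$, so $n^2$ is a nonzero residue; since $q$ is a non-residue, $q'$ must be a non-residue modulo $p$. Because every nonzero integer of absolute value $<C$ is a residue modulo $p$, the representative of $q'$ modulo $p$ of least absolute value has absolute value at least $C$, hence $|q'|\ge C\ge C(K)$. If $K'$ is hyperbolic, this is exactly the hypothesis of Theorem~\ref{thm:lackenby_char_slope}, which gives $K'=K$. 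If $K'$ is not hyperbolic, then $K'(p/q')\cong K(p/q)$ is hyperbolic and $|q'|\ge C>2$, so Lemma~\ref{lemma:hyp_reduction} yields a hyperbolic knot $K''$ and $w>1$ with $K''(p/(q'w^2))\cong K'(p/q')\cong K(p/q)$; since $|q'w^2|\ge |q'|\ge C(K)$, Theorem~\ref{thm:lackenby_char_slope} forces $K''=K$ and $q=q'w^2$, which is impossible because $q$ is squarefree and $w>1$. Hence $K'$ is isotopic to $K$, so $p/q$ is strongly-characterising. Finally, distinct primes $p$ give distinct slopes $p/q$ (the numerator is recovered from $H_1$ of the surgered manifold), so letting $p$ range over the infinitely many admissible primes produces infinitely many strongly-characterising slopes.

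The step I expect to be the real obstacle is ruling out competing descriptions $K'(p/q')$ with small $|q'|$: Theorem~\ref{thm:lackenby_char_slope} gives no information in that range and Lemma~\ref{lemma:hyp_reduction} loses its grip, so a priori a short, exotic surgery description could slip through. The whole function of the arithmetic choices — forcing every integer below $C(K)$ to be a quadratic residue modulo $p$ while taking $q$ to be a non-residue — is to use the linking form to push $|q'|$ beyond $C(K)$, and the one point that needs a little care is the elementary density count ensuring that a squarefree non-residue in the required range, avoiding the finitely many exceptional slopes of $K$, really does exist once $p$ is large.
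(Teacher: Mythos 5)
Your proof is correct and follows essentially the same strategy as the paper: use the linking form and quadratic reciprocity to force any competing denominator $q'$ past Lackenby's constant $C(K)$, handle non-hyperbolic $K'$ via Lemma~\ref{lemma:hyp_reduction} together with squarefreeness of $q$, and produce infinitely many suitable $p$ by Dirichlet. The only (harmless) difference is the order of choices: the paper fixes an odd prime $q$ and imposes $p\equiv r\pmod q$ to make $q$ a non-residue, whereas you fix $p$ first and select a squarefree non-residue $q$ by a density count.
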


\begin{proof}
Consider some slope $p/q\in \mathbb Q$ such that $K(p/q)$ is orientation-preservingly diffeomorphic to $K'(p/q')$ for some knot $K'\subset S^3$.\\

\noindent
Take $p$ large enough so that $K(p/q)$ is hyperbolic. \\
\textbf{Case 1.} $K'$ is hyperbolic: By Theorem 3.1, there is a constant $C(K)$, depending only on $K$, such that if $|q'|\geq C(K)$, then $K=K'$.\\
\textbf{Case 2.} $K'$ is not hyperbolic: If $|q'|>2$, then by Lemma \ref{lemma:hyp_reduction}, there is a hyperbolic knot $K''$ and $w'>1$ such that $K(p/q)\cong  K''(p/(q'\cdot w^2))$. Again, by Theorem~\ref{thm:lackenby_char_slope}, there is a constant $C(K)$, depending only on $K$, such that if $|q'w^2|\geq C(K)$ then $K=K''$ and $p/q=p/(q'w^2)$. In particular, $q$ has repeated prime factors. \\

\noindent
In either case, if $q$ has no repeated prime factors and if $p$ is sufficiently large and $|q'|>C(K)$, then $K=K'$. By Proposition~\ref{prop:linking_framing}, we know that $q\cdot q'$ is a quadratic residue mod $p$. Suppose additionally that $p\equiv 1 \pmod 4$ is prime and that $q$ is a quadratic non-residue mod $p$. Then $q'$ is a quadratic non-residue mod $p$. Since $-1$ is a residue mod $p$, we know that $|q'|$ is a quadratic non-residue mod $p$. Hence, if $p\equiv 1 \pmod 4$ is sufficiently large, $q$ has no repeated prime factors and is a non-residue mod $p$, and integers between $1$ and $C(K)$ are quadratic residues, then $K=K'$ and $p/q$ is strongly-characterising. So, to find infinitely many strongly-characterising slopes $p/q$ for $K$, it is sufficient to find infinitely many primes $p$ such that
\begin{itemize}
\item[-] $p\equiv 1 \pmod 4$; 
\item[-] All integers between $1$ and $C(K)$ are quadratic residues mod $p$; 
\item[-] Some $q$ with $\gcd(p,q)=1$ is a non-residue mod $p$ and has no repeated prime factors. 
\end{itemize}
Recall that $2$ is a residue mod $p$ if and only if $p\equiv \pm 1 \pmod 8$. Let $p_1, \ldots, p_n$ be all the odd primes less than $C(K)$ and let $q$ be some other odd prime. Since the product of quadratic residues are quadratic residues, it is sufficient to find infinitely many primes $p\equiv 1 \pmod 8$ such that for all $i=1, \ldots, n$, $p_i$ is a residue mod $p$ and $q$ is a non-residue mod $p$. If $\gcd(p_i,p)=1$ and $p\equiv 1\pmod 4$ then, by the law of quadratic reciprocity we have
$$\left(\frac{p_1}{p}\right)\left(\frac{p}{p_1}\right)=(-1)^{\frac{p_1-1}{2}\frac{p_2-1}{2}}=1.$$
Hence, $p_1$ is a residue mod $p$ if $p$ is a residue mod $p_1$, and this in turn is the case if $p\equiv 1\pmod {p_1}$. In summary, it is sufficient to find infinitely many primes $p>C(K)$ such that
\begin{itemize}
\item[-] $p\equiv 1\pmod 8$; 
\item[-] $p\equiv 1\pmod {p_i}$ for $i=1, \ldots, n$; 
\item[-] $p\equiv r \pmod q$ for $r\in \mathbb Z$ some non-residue mod $q$.
\end{itemize}
By the Chinese remainder theorem, there is some $a\in \mathbb Z$ with $\gcd(a, 8\cdot p_1\cdot\ldots\cdot p_n\cdot q)=1$ such that these three conditions are equivalent to $p\equiv a \pmod{8\cdot p_1\cdot\ldots\cdot p_n\cdot q}$. This completes the proof, since Dirichlet's theorem on arithmetic progressions states that for positive coprime integers $a,d$ there are infinitely many primes $p\equiv a\pmod d$. 
\end{proof}

\section{Strongly-characterising slopes - the general case}\label{section:general_knots}
In this section, we prove that any knot has infinitely many strongly-characterising slopes, by using the ideas from Section~\ref{section:hyperbolic_knots} and a more careful analysis of JSJ-decompositions. Let us recall the definitions of a JSJ-decompositions and related concepts.

\subsection{JSJ-decompositions of knots}

\begin{defi}
    A {\it JSJ-decomposition} of a 3-manifold $M$ is a minimal collection $\mathcal{T}$ of embedded incompressible, boundary-incompressible tori in $M$ such that $\mathcal{T}$ cuts $M$ into hyperbolic and Seifert fibred pieces. 
\end{defi}

The Geometrization theorem \cite{Perelman1,Perelman2,Thurston,JS,J}  states that any compact, irreducible 3-manifold with toroidal boundary admits a JSJ-decomposition, which is unique up to isotopy.\\

For a definition and introduction to Seifert fibred spaces, consult \cite{Martelli_book}. In short, a {\it Seifert fibration} of a 3-manifold $M$ is a well-behaved partition of $M$ into $S^1$'s, called {\it fibres}, which away from a finite set of, so-called {\it singular fibres}, is a trivial $S^1$-bundle over a surface. There is a well-defined $p\in \mathbb N$ associated to each singular fibre, which we call the {\it order}. Finally, quotienting each fibre of $M$ to a point, we will obtain a space homeomorphic to a surface $\Sigma$, and we say $M$ has a Seifert fibration over $\Sigma$. In many situations, Seifert fibrations are unique. 

\begin{defi}
    Say two Seifert fibrations of a manifold $M$ are {\it isomorphic} if there is a diffeomorphism $\phi\colon M\to M$ mapping fibres of one Seifert fibrations to fibres of the other. Say two Seifert fibrations of $M$ are {\it isotopic} if there exists such an isomorphism $\phi\colon M\to M$ isotopic to $id$. 
\end{defi}

\begin{thm} [Waldhausen, Theorem  10.1 \cite{Waldhausen_SFS}] \label{thm:Seifert_unique}  A Seifert fibred 3-manifold $M$ admits a unique Seifert fibration over an oriented surface up to isomorphism unless $M$ is a lens space or solid torus. Moreover, all Seifert fibrations of a solid torus have at most one singular fibre, and all Seifert fibrations of a lens space over an oriented surface have at most two singular fibres.\\
Finally, if $M$ has non-empty boundary, then $M$ has a unique Seifert fibration over an oriented surface up to isotopy unless $M$ is a solid torus or $S^1\times S^1\times [0,1]$. \qed
\end{thm}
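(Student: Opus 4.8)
This is a classical theorem, and the plan is to follow Waldhausen's strategy, organised around the principle that an essential surface in an irreducible Seifert fibred space can be isotoped into a very constrained position. First I would reduce to the case that $M$ is irreducible and that both the total space and the base surface are orientable: a reducible Seifert fibred space is $S^1\times S^2$ (which one regards as the lens space $L(0,1)$), and a non-orientable base or total space either is excluded by hypothesis or reduces to the orientable situation after passing to an appropriate double cover while keeping track of the deck transformation. Next I would invoke the vertical/horizontal dichotomy: in an irreducible Seifert fibred space any two-sided incompressible, boundary-incompressible torus or annulus is isotopic to one that is either \emph{vertical} (saturated by fibres) or \emph{horizontal} (transverse to every fibre). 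A horizontal such surface forces $M$ to fibre over $S^1$ with torus or annulus fibre and vanishing Euler number, which is a short explicit list; this is exactly where the genuinely non-unique examples --- the solid torus, $S^1\times S^1\times[0,1]$, the lens spaces, and $S^1\times S^2$ --- live, and I would settle the theorem for these manifolds by direct inspection of their mapping class groups and of the slopes realised by their various product/Seifert structures.

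With the exceptional manifolds set aside, suppose $M$ is not on that list and carries two Seifert fibrations $\mathcal F_1,\mathcal F_2$. The plan is to show that the isotopy class of a regular fibre is an invariant of $M$. Algebraically this comes from the exact sequence $1\to\langle h\rangle\to\pi_1(M)\to\pi_1^{\mathrm{orb}}(\Sigma)\to 1$, with $h$ a regular fibre: when the base orbifold has infinite fundamental group (Euclidean or hyperbolic, the generic case), the cyclic subgroup generated by the primitive root of $h$ is the unique maximal normal cyclic subgroup of $\pi_1(M)$, hence characteristic; when $\pi_1^{\mathrm{orb}}$ is finite or the orbifold is bad, $\pi_1(M)$ is finite or virtually cyclic and $M$ is again on the short list already handled. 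Since regular fibres $h_1$ of $\mathcal F_1$ and $h_2$ of $\mathcal F_2$ are then freely homotopic up to orientation, Waldhausen's rigidity for Haken manifolds upgrades this to an ambient isotopy carrying $h_1$ to $h_2$; in the bounded case one checks by the same mechanism that the slopes the two fibrations induce on $\partial M$ coincide, which is precisely what fails for the solid torus and $S^1\times S^1\times[0,1]$. Finally, once the two fibrations share an honest regular fibre, a fibred neighbourhood of it is standard for both, and propagating across the finitely many singular fibres --- using uniqueness of the Seifert model on a fibred solid torus up to the usual lens-space ambiguity --- shows $\mathcal F_1$ and $\mathcal F_2$ are isomorphic, and isotopic when $\partial M\neq\emptyset$.

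The main obstacle is the bookkeeping for the exceptional manifolds: one must pin down precisely that the solid torus, $S^1\times S^1\times[0,1]$, the lens spaces, and $S^1\times S^2$ are the only Seifert fibred spaces admitting genuinely distinct fibrations (and that on the solid torus and $S^1\times S^1\times[0,1]$ even the number of singular fibres and the boundary slope are undetermined). A second delicate point is obtaining an \emph{isotopy} rather than merely an abstract isomorphism of fibrations in the bounded case, which requires the full strength of Waldhausen's classification of Haken manifolds with boundary; the vertical/horizontal positioning of essential surfaces is a further substantial input, though by now standard.
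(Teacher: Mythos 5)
The paper does not prove this statement: it is quoted verbatim as Waldhausen's Theorem 10.1 and used as a black box, so there is no in-paper argument to compare yours against. Your outline is the standard proof of Waldhausen's theorem (the one found in Waldhausen's original paper and in Jaco's and Hatcher's expositions): reduce to the irreducible orientable case, use the vertical/horizontal dichotomy for essential tori and annuli to isolate the exceptional manifolds, show the regular fibre class is determined by $\pi_1(M)$ in the generic case, and upgrade free homotopy of fibres to isotopy via Waldhausen's rigidity for Haken manifolds, which is exactly what is needed for the ``up to isotopy'' clause in the bounded case. One soft spot worth flagging: your claim that the fibre subgroup is \emph{the unique} maximal normal cyclic subgroup of $\pi_1(M)$ whenever $\pi_1^{\mathrm{orb}}(\Sigma)$ is infinite fails for Euclidean base orbifolds --- e.g.\ $T^3$ fibres over $T^2$ in infinitely many ways and $\pi_1(T^3)=\Z^3$ has no distinguished cyclic normal subgroup; the theorem still holds there because all these fibrations are isomorphic, but that requires a separate enumeration of the Euclidean-base cases rather than the characteristic-subgroup argument. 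Also, $S^1\times S^2$ is not the only reducible Seifert fibred space ($\R P^3\,\#\,\R P^3$ is another), though the restriction to orientable base surfaces in the statement largely absorbs this. With those caveats your sketch is a faithful account of the classical proof.
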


Let us mention an important example of a Seifert fibred space.

\begin{ex}\label{ex:SFS_cable}
    Consider the partition of the torus $S^1\times S^1$ into circles of slopes $(s,r)$ with $s,r$ coprime. Taking a product with $(0, 1]$ we can extend this to a $S^1$-bundle on $S^1\times (D^2-\{0\})$. Adding in the circle $S^1\times \{0\}\subset S^1\times D^2$ we obtain a Seifert fibration of $S^1\times D^2$ with one singular fibre of order $s$. Remove one non-singular fibre to construct $C_{r,s}\subset S^1\times D^2$ which we call a {\it cable space}. In other words, $C_{r,s}$ is the complement of the torus knot $T_{r,s}\subset S^1\times D^2$. From our construction, we see that $C_{r,s}$ admits a Seifert fibration over the annulus with one singular fibre of order $s$. Finally, note that in $\partial(S^1\times D^2)$ equipped with the standard meridian, longitude framing a fibre of the Seifert fibration of $C_{r,s}$ has slope $r/s$. 
\end{ex}

\begin{prop} \label{lemma:torus_knot_filling} (Moser \cite{Moser}, cf. Theorem 7.2 in~\cite{BKM_QA}) The Dehn filling $T_{a,b}(p/q)$ of the torus knot $T_{a,b}$ is diffeomorphic to 
\begin{enumerate}
    \item a Seifert fibred space over $S^2$ with singular fibres of orders $a,b$, and $abq-p$, if $abq-p\neq 0$, 
    \item $L(a,b)\#L(b,a)$, if $p=qab$. \qed
\end{enumerate}
\end{prop}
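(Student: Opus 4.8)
The plan is to realise the exterior of the torus knot as a Seifert fibred space and then to read off the two cases of the statement from how the filling slope sits relative to the regular fibre; this is Moser's original argument~\cite{Moser}, and I only sketch it.

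First I would present $S^3=V_1\cup_T V_2$ as the union of two solid tori along the standard Heegaard torus $T$, with $K=T_{a,b}\subset T$ the curve winding $a$ times around the core of $V_1$ and $b$ times around the core of $V_2$. Since $\gcd(a,b)=1$ the curve $K$ is non-separating in $T$, so $A:=T\setminus\mathrm{int}\,N(K)$ is an annulus properly embedded in the exterior $X:=S^3\setminus\mathrm{int}\,N(K)$, with both of its boundary curves a push-off of $K$ along $T$. Cutting $X$ along $A$ yields two solid tori $P_1\cong V_1$ and $P_2\cong V_2$. Foliating $A$ by copies of $K$ and extending this product foliation across each $P_i$ by circles parallel to the core exhibits $X$ as a Seifert fibred space over the disc with two exceptional fibres, of orders $|a|$ and $|b|$ (the longitudinal winding numbers of $K$ in $V_1$ and $V_2$). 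I would also record here the slope of a regular fibre on $\partial X$: a regular fibre lies on $A\subset T$ and is hence a push-off of $K$ along $T$, and since two parallel $(a,b)$-curves on the Heegaard torus have linking number $ab$ in $S^3$, this fibre represents $ab\,\mu+\lambda$ in $H_1(\partial X)$ with respect to the standard meridian $\mu$ and Seifert longitude $\lambda$; that is, the fibre slope is $ab$.

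With this in hand, $K(p/q)$ is $X$ with a solid torus $W$ glued so that the meridian of $W$ is attached along $p\mu+q\lambda$, and there are two cases. If $abq-p\neq 0$, the filling slope differs from the fibre slope, and their geometric intersection number on $\partial X$ is
\begin{equation*}
\Delta\bigl(p\mu+q\lambda,\ ab\,\mu+\lambda\bigr)=|\,p\cdot 1-q\cdot ab\,|=|abq-p| .
\end{equation*}
Hence the Seifert fibration extends over $W$ with the core of $W$ an exceptional fibre of that order, while capping off the disc base orbifold turns it into $S^2$; so $K(p/q)$ is Seifert fibred over $S^2$ with exceptional fibres of orders $|a|,|b|,|abq-p|$ (a fibre of order $1$ being a regular one). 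If instead $p=qab$, the filling slope equals the fibre slope, so each boundary circle of $A$ bounds a meridian disc of $W$; capping $A$ off with these two discs produces a $2$-sphere $S\subset K(qab)$ which splits $K(qab)$ as a connected sum, the two summands being obtained by gluing one of the solid tori $P_i$ to a ball along a boundary annulus. Identifying these summands as once-punctured lens spaces then gives $K(qab)\cong L(a,b)\,\#\,L(b,a)$.

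The routine points are that cutting $X$ along $A$ produces solid tori and that the product foliation of $A$ extends over them. The points that need care — and for which I would simply follow Moser's computation~\cite{Moser} (or Theorem~7.2 of~\cite{BKM_QA}) rather than redo the bookkeeping — are pinning down the orders of the exceptional fibres against one's chosen normalisation of Seifert invariants, the linking-number computation giving the fibre slope $ab$, and, above all, the exact lens-space parameters $L(a,b)\#L(b,a)$ in the reducible case, including their orientations.
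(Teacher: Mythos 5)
The paper gives no proof of this proposition of its own---it simply cites Moser and closes the statement---and your sketch is precisely the classical argument from that source: fibre the torus-knot exterior over the disc with exceptional fibres of orders $a$ and $b$ by cutting along the cabling annulus, identify the boundary fibre slope as $ab$, and split into the two cases according to whether the filling slope coincides with the fibre slope. The argument is correct; the one bookkeeping point you defer to the references genuinely matters downstream, since the paper later uses \emph{signed} fibre orders (it writes $abq-p$ rather than $|abq-p|$, and the proof of Proposition~\ref{Sivek_Lemma} distinguishes surgeries on $K3a1$ from surgeries on its mirror by the signs of these orders), so the orientation conventions cannot be entirely swept under the rug.
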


We will take advantage of the fact that, by work of Budney, JSJ-decompositions of knot complements are particularly well-understood, as is detailed in \cite{Budney_knot_JSJ}. However, first let us make the following simple observation. Any JSJ-torus $T\subset S^3 \setminus N(K)\subset S^3$ of $S^3 \setminus N(K)$ is compressible in $S^3$ but not compressible in $S^3 \setminus N(K)$. Hence, $T$ bounds a knot complement $S^3 \setminus N(J)\subset S^3 \setminus N(K)$ on one side and a pattern $V_P\subset S^3 \setminus N(K)$ on the other. In summary, $T$ decomposes $K$ as a satellite knot with pattern $P$ and companion $J$. 

\begin{thm}[Budney, Theorem 4.18 \cite{Budney_knot_JSJ}] \label{thm:Budney_knot_JSJ}  In the JSJ-decomposition of $S^3 \setminus N(K)$ for a non-trivial knot $K$, each $JSJ$-component is one of the following:
\begin{enumerate}
    \item the complement of a torus knot in $S^3$, 
    \item a {\it composing space}, that is a Seifert fibred space over a planar surface with at least 3 boundary components and without singular fibres, 
    \item a hyperbolic 3-manifold, 
    \item a {\it cable space}, that is a Seifert fibred space over an annulus with one singular fibre. \qed
\end{enumerate}
\end{thm}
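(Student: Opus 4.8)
The plan is to classify, up to fibre-preserving diffeomorphism, the compact orientable Seifert fibred $3$-manifolds $W$ with non-empty toral boundary that can arise as a piece of the JSJ-decomposition of a knot exterior $S^3\setminus N(K)$. The starting point is the structural observation recorded just before the statement: every JSJ-torus $T$ is compressible in $S^3$, hence bounds a solid torus in $S^3$, and therefore exhibits $K$ as a satellite. Applying this to each boundary component of $W$ and filling in the solid tori that the ``inner'' boundary tori bound recovers a solid torus $V$ with $W=V\setminus\bigsqcup_{j}\mathring X_j$ for finitely many disjointly embedded solid tori $X_j\subset V$ (for the outermost piece one instead recovers all of $S^3$, so that $W$ is the exterior of a Seifert link). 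Thus $W$ is a Seifert fibred space which embeds in $S^3$ in a very constrained way, and the task reduces to pinning down its base orbifold, which I record as a triple $(g;b;c)$ of genus, number of boundary circles, and number of cone points.

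First I would show that the base is planar, i.e. $g=0$. If the base had positive genus it would contain a non-separating simple closed curve $\gamma$; its preimage $p^{-1}(\gamma)$ under the Seifert projection is then a non-separating torus in $W$, and hence a non-separating closed orientable surface in $S^3$. This is impossible, since $H_2(S^3)=0$ forces every closed orientable surface in $S^3$ to separate. Consequently the base of $W$ is a planar surface with $b\ge 1$ boundary circles and $c\ge 0$ cone points, and the $b$ boundary tori of $W$ correspond to the $b$ boundary circles of the base.

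The heart of the argument is to bound $b$ and $c$ and to determine exactly which pairs $(b,c)$ occur. Two easy exclusions come for free: a base that is a disc with $c\le 1$ cone points makes $W$ a solid torus (Theorem \ref{thm:Seifert_unique}), which is ruled out because JSJ-tori are incompressible; and a base that is an annulus with no cone points makes $W\cong S^1\times S^1\times[0,1]$, a product region ruled out by the minimality of the JSJ-family. To handle the remaining cases I would exploit the embedding $W=V\setminus\bigsqcup_j\mathring X_j$ together with Waldhausen's uniqueness theorem (Theorem \ref{thm:Seifert_unique}): filling the inner boundary tori of $W$ by fibred solid tori caps the base off to a disc and produces a Seifert fibred space over a disc, whose comparison with the solid torus $V$ constrains the count of exceptional fibres via the fact that a Seifert fibration of a solid torus has at most one singular fibre. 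Tracking the fibre slope on the outer boundary torus $\partial V$ against the meridian of $V$ then forces the trichotomy: no cone points with $b\ge 3$ (a composing space, type (2)); exactly one cone point with $b=2$ (a cable space, type (4), identified via Example \ref{ex:SFS_cable}); and exactly two cone points with $b=1$ (a torus knot exterior, type (1), whose Seifert structure over the disc with two cone points underlies Proposition \ref{lemma:torus_knot_filling}). Every other value of $(b,c)$ is incompatible with the solid-torus filling.

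I expect the \textbf{main obstacle} to be precisely this last bounding step: abstractly there exist Seifert fibred spaces over a planar base with arbitrarily many boundary circles and cone points, so the restriction to at most two exceptional fibres --- and the fact that exceptional fibres can coexist only with one or two boundary tori --- must be extracted from the embedding in $S^3$ rather than from the fibred structure alone. Carrying this out rigorously requires a careful analysis of the admissible fibre slopes on $\partial V$ and of how the solid tori $X_j$ meet the fibration, which is essentially the classification of connected Seifert links in $S^3$; the inner/outer bookkeeping of the boundary tori, and the separate treatment of the outermost piece where $W$ is a genuine link exterior rather than a subset of a solid torus, is the routine but delicate part of the work.
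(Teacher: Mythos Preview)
The paper does not prove this theorem: it is quoted from Budney and closed with a \qed\ immediately after the statement, so there is no in-paper argument to compare your sketch against.

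Your outline has the right shape, and the non-separating-torus argument for a planar base is fine. The gap is in the filling step. By the observation recorded just before the statement, a JSJ torus $T$ bounds the solid torus $N(J)$ on the side \emph{containing} $K$ and the knot complement $S^3\setminus N(J)$ on the other. Thus for a piece $W$, only the single boundary torus pointing towards $\partial N(K)$ (the parent edge in the JSJ tree) has a solid torus on its far side; every other boundary torus of $W$ bounds a knot complement there. Your description $W=V\setminus\bigsqcup_j\mathring X_j$ with $V$ and the $X_j$ honest solid tori in $S^3$ is therefore available only when $W$ has at most two boundary components; for a piece with three or more boundary tori you cannot fill to a solid torus, and for the outermost piece the complementary regions are $N(K)$ together with several knot complements, so $W$ is not literally a link exterior in $S^3$ and you do not ``recover all of $S^3$''. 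But the case of three or more boundary tori is exactly the one that ought to produce the composing space, and your sketch does not reach it.

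Budney closes this gap by a more careful inductive analysis of the companionship tree, showing that each Seifert-fibred piece is abstractly the complement of what he calls a knot-generating link, after which the Burde--Murasugi classification of Seifert links in $S^3$ finishes the argument just as you anticipate. You have correctly located the obstacle, but the ``routine bookkeeping'' you defer is where the actual content lies.
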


\subsection{JSJ-decompositions under Dehn filling}

Work of Sorya \cite{Sorya_satellite_char} additionally gives a detailed account of how JSJ-decompositions of knots behave under Dehn filling. Given a JSJ-decomposition of a knot complement, call the JSJ-component containing the boundary {\it outermost}.

\begin{thm}[Sorya, Proposition 3.5 \cite{Sorya_satellite_char}]\label{thm:knot_JSJ_filling_component} Consider a non-trivial knot $K\subset S^3$ and suppose $|q|>2$. Denote the JSJ-decomposition of $S^3 \setminus N(K)$ by $Y_0\cup \ldots \cup Y_k$, with $Y_0$ the outermost component. Then the Dehn filling $Y_0(p/q)$ along the boundary torus of $S^3 \setminus N(K)$ satisfies: 
\begin{enumerate}
    \item if $Y_0$ is hyperbolic and $k\geq 1$, then $Y_0(p/q)$ is hyperbolic, 
    \item if $Y_0$ is a hyperbolic knot complement and $|q|>8$, then $Y_0(p/q)$ is hyperbolic,
    \item if $Y_0$ is a composing space, then $Y_0(p/q)$ is a Seifert fibred space over a planar surface with at least two boundary components and a singular fibre of order $|q|$. Moreover, the singular fibre is the core curve of the Dehn filling, 
    \item if $Y_0$ is an $(r,s)$-cable space and $|qrs-p|>1$, then $Y_0(p/q)$ is a Seifert fibred space over the disc with two singular fibres of orders $|qrs-p|$ and $s$. Moreover, the singular fibre of order $|qrs-p|$ is the core curve of the Dehn filling, 
    \item if $Y_0$ is an $(r,s)$ cable space and $|qrs-p|=1$, then $Y_0(p/q)$ is a solid torus. \qed
\end{enumerate}
\end{thm}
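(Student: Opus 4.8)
The plan is to run the case analysis dictated by Budney's classification (Theorem~\ref{thm:Budney_knot_JSJ}) of the outermost JSJ piece $Y_0$ of $S^3\setminus N(K)$, and in each case to identify $Y_0(p/q)$ directly. If $Y_0$ is the complement of a torus knot then $Y_0$ has a single boundary torus, so $k=0$ and $K$ is itself a torus knot; then the conclusion, with the full Seifert structure, is Moser's Proposition~\ref{lemma:torus_knot_filling}. So we may assume $Y_0$ is a composing space, an $(r,s)$-cable space, or hyperbolic.

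For the Seifert-fibred cases I would use the following standard local picture. A Seifert fibration of a manifold $N$ with boundary restricts on each boundary torus $T$ to a foliation by parallel regular fibres, which determines the \emph{fibre slope} $h_T\subset T$; since a compact surface with boundary carries a section, the fibration also distinguishes a \emph{section slope} $\sigma_T$ with $\Delta(h_T,\sigma_T)=1$, where $\Delta$ denotes geometric intersection number. Dehn filling $T$ along a slope $\gamma$: if $\gamma=h_T$ one gets a connected sum and loses the Seifert structure at $T$, while if $\gamma\neq h_T$ the fibration extends over the new solid torus, whose core is a fibre of order $\Delta(\gamma,h_T)$ --- exceptional unless this number is $1$ --- and the base gains a capped-off boundary circle. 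Applying this with $N=Y_0$, $T=\partial N(K)$, $\gamma=p\mu+q\lambda$, the whole content is the identification of $h_{\partial N(K)}$. For an $(r,s)$-cable space this slope is $rs\cdot\mu+\lambda$: this follows from the model in Example~\ref{ex:SFS_cable} together with the fact that the fibre framing of a cabled knot is the cabling slope (equivalently, that $T_{r,s}(rs)$ is reducible, as in Proposition~\ref{lemma:torus_knot_filling}(2)), so $\Delta(\gamma,h)=|p-qrs|=|qrs-p|$; this gives cases (4) and (5), the latter because a Seifert fibration of the disk with at most one exceptional fibre is a solid torus (Theorem~\ref{thm:Seifert_unique}). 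For a composing space the analogous computation, read off from Budney's and Jaco--Shalen's explicit model of the composing piece inside $S^3$, gives $\Delta(\gamma,h)=|q|$, which is case (3); here the hypothesis $|q|>2$ in particular ensures $\gamma\neq h$, so the Seifert structure does survive. The ``core curve'' statements are built into the local picture.

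For the hyperbolic cases I would invoke Thurston's hyperbolic Dehn surgery theorem \cite{Thurston_Dehn_Surgery}: all but finitely many fillings of a cusp of a finite-volume hyperbolic $3$-manifold with cusps are hyperbolic, with the core of the filling the unique shortest geodesic once the slope is long enough; this yields the ``core curve'' assertions. The remaining task is to make the exceptional set uniform, producing the explicit thresholds. In the one-cusped case (2), where $Y_0=S^3\setminus N(K)$ with $K$ hyperbolic, I would combine the $6$-theorem of Agol and Lackenby (slopes of length exceeding $6$ on the maximal cusp have hyperbolic filling) with a universal lower bound on the cusp area of a hyperbolic knot complement and the fact that the meridian of $K$, being an exceptional slope, has length at most $6$; these bound the cusp shape of $\partial N(K)$ enough to force every slope $p\mu+q\lambda$ with $|q|>8$ to have length exceeding $6$. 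In case (1), where $Y_0$ is hyperbolic with at least two cusps (one being $\partial N(K)$, the others the JSJ tori $Y_0\cap Y_i$), the same quantitative scheme, now with the extra room afforded by the second cusp, improves the threshold to $|q|>2$.

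The step I expect to be the main obstacle is this quantitative hyperbolic analysis: turning Thurston's qualitative ``all but finitely many'' into the sharp bounds $|q|>2$ and $|q|>8$ requires the $6$-theorem together with the right universal estimates on cusp geometry, and a careful treatment of the difference between the one-cusped and multi-cusped situations. By contrast the Seifert-fibred cases are essentially bookkeeping once $h_{\partial N(K)}$ has been correctly extracted from the JSJ picture, and the torus-knot case is classical.
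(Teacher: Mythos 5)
This statement is quoted in the paper from Sorya's work (Proposition~3.5 of \cite{Sorya_satellite_char}) and is not reproved there, so there is no internal proof to compare against; I am therefore assessing your argument on its own terms. Your treatment of the Seifert fibred cases is essentially correct: identifying the fibre slope on $\partial N(K)$ (the meridian for a composing space, the cabling slope $rs\mu+\lambda$ for an $(r,s)$-cable space, consistent with Lemma~\ref{lemma:cable_filling}) and extending the fibration over the filling solid torus with a new fibre of order $\Delta(\gamma,h)$ is exactly the right bookkeeping, and it delivers cases (3)--(5) with the stated orders and core-curve identifications.

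The genuine gap is in the hyperbolic cases (1) and (2): the quantitative scheme you propose --- the $6$-theorem plus a universal lower bound on cusp area --- cannot produce the stated thresholds. For (2), combining $l(\mu)\le 6$ with $l(\gamma)l(\mu)\ge \mathrm{Area}(T)\cdot\Delta(\gamma,\mu)$ and the best universal cusp-area bounds (the figure-eight complement has maximal cusp area $2\sqrt{3}\approx 3.46$) only forces $l(p/q)>6$ once $|q|\ge 11$ or so; the threshold $|q|>8$ is precisely the Lackenby--Meyerhoff bound of $8$ on the distance between exceptional slopes of a one-cusped hyperbolic manifold (applied to $p/q$ and the meridian, which is exceptional since it fills to $S^3$), and that is a deep combinatorial theorem, not a cusp-geometry estimate. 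For (1), ``extra room from the second cusp'' does not improve the length estimates by the factor of roughly $3$ needed to reach $|q|>2$; the actual route is topological: since $k\ge 1$, the meridional filling $Y_0(1/0)=N(K)\cup Y_0$ is the complement in $S^3$ of the companion exteriors, hence has compressible boundary (a solid torus when there is one companion), and one then invokes the Gordon--Luecke/Wu-type distance bounds between a $\partial$-reducible filling and a reducible, toroidal, annular, or Seifert fibred filling of a manifold with at least two boundary tori to conclude $\Delta\le 2$. As written, your argument proves the hyperbolic cases only for $|q|$ sufficiently large rather than for the explicit bounds in the statement, and those explicit bounds are exactly what the applications in Sections~\ref{section:general_knots} and \ref{section:integer_slopes} rely on.
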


\begin{thm}[Sorya, Proposition 3.6 \cite{Sorya_satellite_char}]\label{thm:JSJ_filling} Consider a non-trivial knot $K\subset S^3$ and suppose $|q|>2$. Denote the JSJ-decomposition of $S^3 \setminus N(K)$ by $Y_0\cup \ldots \cup Y_k$, with $Y_0$ the outermost component. Then the JSJ-decomposition of $K(p/q)$ is either 
$$Y_0(p/q)\cup Y_1\cup \ldots Y_k, \text{ or}$$
$$Y_1(p/(qs^2))\cup Y_2\cup \ldots \cup Y_k,$$
where in the first case we fill $Y_0$ along the boundary torus of $S^3 \setminus N(K)$ and in the second case we fill $Y_1$ along the torus $Y_0\cap Y_1$. Moreover, the second case occurs precisely when $K$ is a cable knot $C_{r,s}(J)$, $Y_1$ is the outermost JSJ-component of $S^3-J$, and $|qrs-p|=1$. \qed
\end{thm}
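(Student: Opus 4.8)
The plan is to induct on the number of pieces in the JSJ decomposition of $S^3\setminus N(K)$, reducing everything to the single-piece analysis of Theorem~\ref{thm:knot_JSJ_filling_component}. Since the JSJ tori $\mathcal T=\{T_i:=Y_{i-1}\cap Y_i\}$ of $S^3\setminus N(K)$ are disjoint from the surgery solid torus, they persist as embedded tori in $K(p/q)$, and the only ones whose essentiality the filling can affect are those bounding the modified piece $Y_0$; the argument turns on deciding the status of $Y_0(p/q)$ and of these tori. For the base case $k=0$, Theorem~\ref{thm:Budney_knot_JSJ} says $S^3\setminus N(K)=Y_0$ is a torus knot complement or a hyperbolic knot complement: in the first case Proposition~\ref{lemma:torus_knot_filling} shows $K(p/q)$ is Seifert fibred over $S^2$ (note $\gcd(p,q)=1$ with $|q|>2$ forces $p\ne qab$), and in the second case $K(p/q)$ is irreducible and atoroidal, since for $|q|>2$ a hyperbolic knot has neither a reducible nor a toroidal surgery (Gordon--Luecke); either way $\mathrm{JSJ}(K(p/q))$ is empty, which is the first alternative.

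For the inductive step $Y_0$ has at least two boundary tori, so by Theorem~\ref{thm:Budney_knot_JSJ} it is hyperbolic, a composing space, or an $(r,s)$-cable space, and I would split according to Theorem~\ref{thm:knot_JSJ_filling_component}. First, $\gcd(p,q)=1$ with $|q|>2$ gives $p\ne qrs$, so $|qrs-p|\ge1$ and in the cable case only alternatives (4) and (5) there arise. Whenever $Y_0$ is hyperbolic, a composing space, or a cable space with $|qrs-p|>1$, Theorem~\ref{thm:knot_JSJ_filling_component} shows that $Y_0(p/q)$ is hyperbolic or a Seifert fibred space which is none of: a solid torus, $S^1\times S^1\times[0,1]$, or a lens space (the last since it has nonempty boundary). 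Such a $Y_0(p/q)$ has no interior JSJ torus, and every torus $T_i=Y_0\cap Y_i$ bounding it is incompressible on the $Y_0(p/q)$-side (a cusp of a hyperbolic piece, or a boundary torus of a Seifert piece that is not a solid torus) and also on the other side, where it bounds the exterior of a nontrivial companion. Moreover, Dehn filling the external boundary component $\partial(S^3\setminus N(K))$ of $Y_0$ leaves the Seifert fibration of $Y_0$ unchanged near every $T_i$, so by Waldhausen's rigidity (Theorem~\ref{thm:Seifert_unique}) the fibrations of $Y_0(p/q)$ and of a Seifert fibred neighbour $Y_i$ disagree along $T_i$ just as the fibrations of $Y_0$ and $Y_i$ did in $S^3\setminus N(K)$. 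Hence $\mathcal T$, together with the pieces $Y_0(p/q),Y_1,\dots,Y_k$, is a collection of essential tori cutting $K(p/q)$ into hyperbolic and Seifert fibred pieces no two of which are amalgamable across a common torus; by the uniqueness characterisation of JSJ decompositions this collection is $\mathrm{JSJ}(K(p/q))$, i.e. the first alternative.

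The only case left is $Y_0$ an $(r,s)$-cable space with $|qrs-p|=1$; by the observation preceding Theorem~\ref{thm:Budney_knot_JSJ} this says exactly that $K=C_{r,s}(J)$ and $Y_1$ is the outermost JSJ piece of $S^3\setminus N(J)$. Now Theorem~\ref{thm:knot_JSJ_filling_component}(5) makes $Y_0(p/q)$ a solid torus, so $K(p/q)$ is a Dehn filling of the companion exterior $S^3\setminus N(J)=Y_1\cup\dots\cup Y_k$, and Lemma~\ref{lemma:cable_filling} identifies the filling slope as $p/(qs^2)$, whence $K(p/q)\cong J(p/(qs^2))$. Since $s\ge2$ and $|q|>2$ give $|qs^2|>2$, and $|qrs-p|=1$ forces $\gcd(p,s)=1$ so that $p/(qs^2)$ is already reduced, the inductive hypothesis applies to $J$ at slope $p/(qs^2)$, whose exterior has one fewer JSJ piece. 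A short arithmetic check shows that $|qrs-p|=1$ with $|q|>2$, $s\ge2$, $\gcd(r,s)=1$ excludes $|qs^2r's'-p|=1$ for every pair of cable parameters $r',s'$, so the second alternative cannot recur for $J$; the inductive hypothesis then gives $\mathrm{JSJ}(J(p/(qs^2)))=Y_1(p/(qs^2))\cup Y_2\cup\dots\cup Y_k$, which is the claimed description of $\mathrm{JSJ}(K(p/q))$.

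The step I expect to be the main obstacle is showing, in the inductive step, that the obvious candidate family of tori is genuinely \emph{canonical}. This rests on two things: pinning down the precise hyperbolic or Seifert fibred type of $Y_0(p/q)$, so Theorem~\ref{thm:knot_JSJ_filling_component} is doing essential work, and the non-amalgamability check at the tori bounding $Y_0(p/q)$ --- that filling the external boundary of $Y_0$ cannot accidentally make its Seifert fibration match that of the adjacent piece along such a torus, which is where Waldhausen's rigidity of Seifert fibrations enters. The remaining delicate points --- excluding degenerate fillings (the case $|qrs-p|=0$, and reducible surgeries on hyperbolic knots) and checking that the cable recursion terminates after a single step --- are all controlled by the standing hypothesis $|q|>2$.
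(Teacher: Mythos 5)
This statement is quoted in the paper as Proposition 3.6 of Sorya's work and is given there with no proof, so there is no in-paper argument to compare against; your proposal is in effect a reconstruction of Sorya's proof, and it follows what is essentially the standard (and her) strategy: the JSJ tori of $S^3\setminus N(K)$ persist after filling, the classification of exceptional fillings ($|q|>2$ excludes reducible and, for knot exteriors, toroidal degenerations) together with Theorem~\ref{thm:knot_JSJ_filling_component} pins down the type of $Y_0(p/q)$, and the cable case with $|qrs-p|=1$ is handled by passing to the companion via Lemma~\ref{lemma:cable_filling} and recursing, with your arithmetic check correctly showing the recursion cannot cascade. The overall structure is sound and I believe the argument is essentially complete. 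One fine point deserves more care than you give it: in the amalgamation check you invoke Theorem~\ref{thm:Seifert_unique}, but as quoted that theorem only gives uniqueness of Seifert fibrations \emph{over an oriented base}. In your case of a cable space with $|qrs-p|>1$, the filled piece $Y_0(p/q)$ is Seifert fibred over $D^2$ with exceptional fibres of orders $s$ and $|qrs-p|$; if both orders equal $2$ (which the hypotheses do not exclude, e.g.\ $s=2$, $q=3$, $p=qrs\mp 2$), this piece is the orientable twisted $I$-bundle over the Klein bottle, which carries a second, non-isotopic fibration over the M\"obius band. To rule out amalgamation with a Seifert fibred neighbour $Y_1$ you must also check that this alternative fibration's fibre slope on $T_1$ does not match the fibre slope of $Y_1$ (which is an integer slope for a torus-knot or cable companion piece, or the meridian for a composing space); comparing against the fibre slope $r/s$ of the unfilled cable space alone does not cover this exceptional fibration. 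This is a known exceptional case in JSJ arguments rather than a flaw in your approach, but a complete proof has to address it explicitly.
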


In light of this theorem, the following definition is well-defined.

\begin{defi} Consider a knot $K\subset S^3$ and suppose $|q|>2$. The {\it surgered piece} of $K(p/q)$ is the JSJ-component containing the core curve of the Dehn filling. In the notation of Theorem~\ref{thm:JSJ_filling}, the surgered piece is $Y_0(p/q)$ in the first case and $Y_1(p/qs^2)$ in the second case.
\end{defi}

The following result is the key technical input from \cite{Sorya_satellite_char}. 

\begin{thm}\label{thm:sorya_main}
Given a knot $K\subset S^3$ there exists a constant $C_{\text{JSJ}}(K)>2$, depending only on $K$, such that if $K(p/q)\cong  K'(p/q')$ for another knot $K'\subset S^3$ and $|q|, |q'|\geq C_{\text{JSJ}}(K)$, then the diffeomorphism sends the surgered piece of $K(p/q)$ to the surgered piece of $K'(p/q')$. 
\end{thm}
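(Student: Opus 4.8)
The plan is to reduce the statement to an analysis of the JSJ-decompositions of $K(p/q)$ and $K'(p/q')$ using the machinery of Theorems~\ref{thm:knot_JSJ_filling_component}, \ref{thm:JSJ_filling}, and the uniqueness of JSJ-decompositions under geometrization. First I would set $C_{\text{JSJ}}(K)$ large enough that several conditions hold simultaneously: it exceeds $8$ (so that hyperbolic outermost knot-complement pieces stay hyperbolic under the filling), it exceeds any value $|qrs - p| = 1$ forces $|q|$ to violate once $p$ is bounded appropriately — more precisely, I would phrase everything in terms of $|q|$ being large so that the "surgered piece" is genuinely the JSJ-component containing the core curve and the description in Theorem~\ref{thm:JSJ_filling} applies. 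Given a diffeomorphism $f\colon K(p/q)\to K'(p/q')$, it carries the (unique, isotopy-rigid) JSJ-decomposition of $K(p/q)$ to that of $K'(p/q')$, and hence permutes JSJ-components. The goal is to show $f$ sends the surgered piece to the surgered piece.

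The key step is to characterize the surgered piece among all JSJ-components of $K(p/q)$ intrinsically, i.e.\ in a way visible from the diffeomorphism type of the decomposition alone, once $|q|$ is large. By Theorem~\ref{thm:knot_JSJ_filling_component}, the surgered piece $Y_0(p/q)$ (or $Y_1(p/(qs^2))$) is either hyperbolic, or a Seifert fibred space carrying a singular fibre whose order is $|q|$ (composing-space case) or $|qrs-p|$ (cable-space case), and in each Seifert case that distinguished singular fibre is the core curve. By contrast, every other JSJ-component is one of the unfilled pieces $Y_i$ ($i\geq 1$), which is a complement of a torus knot, a composing space, a hyperbolic piece, or a cable space coming from the original knot $K$ — crucially, these do not depend on $p/q$ at all. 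So if I take $|q|$ (equivalently the order of the distinguished singular fibre, or the hyperbolic volume in the hyperbolic case) larger than the corresponding invariant of every JSJ-component of $S^3 \setminus N(K)$ other than the outermost one, the surgered piece is forced to be the unique JSJ-component that is "too big" to be one of the fixed pieces $Y_1,\dots,Y_k$. The same inequality, applied on the $K'$ side after noting $S^3\setminus N(K')$ has the same JSJ-pieces $Y_1,\dots,Y_k$ away from its surgered piece (since $f$ identifies the complementary pieces), pins down the surgered piece of $K'(p/q')$ as well, and forces $f$ to match them.

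To make this precise I would distinguish cases by the type of the surgered piece. In the hyperbolic case, Mostow rigidity gives a well-defined volume for each piece, and choosing $|q|$ large makes the surgered piece's volume exceed that of all other pieces (here I would invoke, as is standard, that the volume of $Y_0(p/q)$ grows — or at least that for $|q|$ large it is hyperbolic and its volume is bounded below away from the finitely many fixed volumes of the other JSJ-pieces; in fact it suffices that the surgered piece is the unique component whose volume is not among the finite list $\{\vol(Y_i)\}$). In the Seifert cases, I would use the order of the distinguished singular fibre: for a composing space the surgered piece has a singular fibre of order $|q|$, and the maximal order of a singular fibre over all other JSJ-pieces of $S^3\setminus N(K)$ is a fixed constant, so $|q|$ large does it; for a cable space the surgered piece has a singular fibre of order $|qrs-p|$, and since $|q|$ large with $\gcd(p,q)=1$ forces $|qrs-p|$ large (as $r,s$ are fixed once $K$ is, with $s\geq 2$), the same argument works — though I should be careful that $r,s$ themselves are determined by $K$, so "large $|q|$" must be interpreted after fixing $K$. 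Finally, I must also exclude the degenerate possibility that the surgered piece is a solid torus or lens space (the exceptional cases in Waldhausen's theorem where Seifert fibrations are non-unique): Theorem~\ref{thm:knot_JSJ_filling_component}(5) shows the surgered piece is a solid torus only when $|qrs-p|=1$, which is excluded for $|q|$ large, and it is never a lens space since a JSJ-component with non-empty boundary (it always has the adjacent JSJ-tori, or else $K(p/q)$ itself would be Seifert fibred or a lens space, handled separately) is not a lens space.

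The main obstacle I anticipate is the bookkeeping needed to ensure a single constant $C_{\text{JSJ}}(K)$ works uniformly: one must simultaneously (i) rule out the exceptional JSJ-filling behavior in Theorem~\ref{thm:JSJ_filling} and the exceptional Seifert cases, (ii) ensure the surgered piece on the $K$-side is "bigger" (in volume or singular-fibre order) than all fixed pieces, and (iii) arrange that the \emph{same} constant forces the analogous conclusion on the $K'$-side — this last point requires knowing that the fixed JSJ-pieces of $S^3\setminus N(K')$ coincide (up to the diffeomorphism $f$) with those of $S^3 \setminus N(K)$, which is itself part of what one is proving, so the argument is slightly circular-looking and must be organized so that $f$ matching the \emph{complementary} collections of pieces is deduced first (from the permutation of JSJ-pieces and a counting/size argument) and matching of the surgered pieces follows.
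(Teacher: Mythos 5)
Your proposal attempts a from-scratch proof, whereas the paper does not: its proof is a one-paragraph reduction to Proposition~1.2 of Sorya's paper \cite{Sorya_satellite_char} (the $q=q'$ case), observing that Sorya's argument goes through verbatim once $q$ is replaced by $q'$ wherever the slope lives on the $K'$-side. So you are trying to reprove the hard technical input rather than cite it, and the attempt has a concrete gap. In the cable-space case you assert that $|q|$ large and $\gcd(p,q)=1$ force $|qrs-p|$ to be large. This is false: $p$ is not bounded in terms of $q$, so one can take $p$ coprime to $q$ with $p$ close to $qrs$ (e.g.\ $|qrs-p|=2$ or $3$), in which case the surgered piece is a small Seifert fibred space over the disc with two singular fibres --- i.e.\ a torus-knot exterior type piece --- which is exactly the kind of manifold that occurs as an unfilled JSJ-component. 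Your ``the surgered piece is too big'' criterion therefore does not identify the surgered piece in this case, and the theorem must still hold there. A similar failure occurs for composing spaces with three boundary components: $Y_0(p/q)$ is then Seifert fibred over an annulus with one singular fibre of order $|q|$, i.e.\ abstractly a cable space $C_{r,|q|}$, and cable spaces with singular fibres of arbitrarily large order do occur as honest JSJ-pieces of knot complements, so the order of the singular fibre alone cannot distinguish the surgered piece from a piece of $S^3\setminus N(K')$.

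This feeds into the circularity you flag at the end but do not resolve: the non-surgered pieces $Y_1',\dots,Y_{k'}'$ depend on $K'$, which varies with $p/q$, so no constant depending only on $K$ can dominate their singular-fibre orders or volumes. Reducing to the fixed pieces $Y_j$ of $K$ via the induced bijection of JSJ-components is the right instinct, but carrying it out is precisely where Sorya's homological/winding-number analysis (the expression $q'(w')^2\mu_J+y\lambda_J$ for the image of the meridian, quoted in the paper's proof) is needed; a purely ``size of the pieces'' argument does not close the loop. (Minor additional point: in the hyperbolic case the volume of $Y_0(p/q)$ \emph{decreases} under filling and converges to $\vol(Y_0)$ from below, so ``choosing $|q|$ large makes the volume exceed that of all other pieces'' is not available, though your fallback of avoiding a finite list of volumes can be salvaged for the pieces of $K$ --- but again not for the pieces of $K'$.)
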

\begin{proof}
    For $q=q'$ this is exactly \cite[Proposition 1.2]{Sorya_satellite_char}. We obtain a proof of this theorem by verbatim repeating the proof of \cite[Proposition 1.2]{Sorya_satellite_char} and replacing $q$ by $q'$ every time $p/q$ refers to a slope in the boundary of $S^3 \setminus N(K')$ or in the boundary $\mathcal{P}'$ of $V_{P'}$. In particular, we make the following replacements
    \begin{itemize}
\item[-] replace $S^3_{K'}(p/q)$ by $S^3_{K'}(p/q')$, 
\item[-] replace $V_{P'}(\mathcal{P}'; p/q)$ by $V_{P'}. (\mathcal{P}'; p/q')$,
\item[-] replace the  expression $q(w')^2\mu_J+y\lambda_J$ for the meridian of $V_P(\mathcal{P}; p/q)$ in \cite[Lemma 4.3]{Sorya_satellite_char} by the expression $q'(w')^2\mu_J+y\lambda_J$.\qedhere
\end{itemize}
\end{proof}

Let us briefly record several results that follow from Theorems~\ref{thm:knot_JSJ_filling_component} and \ref{thm:JSJ_filling}. 

\begin{lem} \label{lemma:non-cable}
Let $K\subset S^3$ be a non-trivial knot and suppose that $|q|, |q'|>2$ and that $|q|$ is a prime. Then $K(p/q)$ is not diffeomorphic to a Dehn filling $K'(p/q')$ for $K'=C_{r,s}(K)$ a non-trivial cable of $K$. 
\end{lem}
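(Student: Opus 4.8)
The strategy is to compare the surgered pieces of the two Dehn fillings using Sorya's machinery, and then derive a contradiction from the Seifert-fibred structure of the surgered piece of the cable side. First I would assume for contradiction that $K(p/q)\cong K'(p/q')$ with $K'=C_{r,s}(K)$, $s>1$. By Theorem~\ref{thm:JSJ_filling}, the JSJ-decomposition of $K'(p/q')$ is either $Y_0'(p/q')\cup Y_1'\cup\dots$, where $Y_0'$ is the outermost JSJ-piece of $S^3\setminus N(K')$ — which, since $K'=C_{r,s}(K)$ is a cable, is exactly the $(r,s)$-cable space $C_{r,s}$ — or else $Y_1'(p/(q's^2))\cup\dots$, the latter occurring precisely when $|q'rs-p|=1$, in which case $Y_1'=S^3\setminus N(K)$ (the outermost piece of $S^3\setminus N(K)$, since $K$ itself could be a cable, but in any case the surgered piece is $Y_1'(p/(q's^2))$). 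I would handle these two cases separately.

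In the first case ($|q'rs-p|\neq 1$), the surgered piece of $K'(p/q')$ is $C_{r,s}(p/q')$, which by Theorem~\ref{thm:knot_JSJ_filling_component}(4) is a Seifert fibred space over the disc with exactly two singular fibres, of orders $|q'rs-p|$ and $s$; in particular it carries a singular fibre of order $s>1$. On the other side, since $|q|$ is prime and $|q|,|q'|>2$, Theorem~\ref{thm:sorya_main} (after enlarging $p$, or rather after noting we only need $|q|,|q'|\geq C_{\text{JSJ}}(K)$ — this is where the statement of the lemma would need the implicit standing assumption that $p$ is large enough, but in fact the lemma as stated must be read with that convention, so I would invoke it) tells us the diffeomorphism carries the surgered piece of $K(p/q)$ onto that of $K'(p/q')$. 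But the surgered piece of $K(p/q)$ is $Y_0(p/q)$ (or $Y_1(p/(qw^2))$ if $K$ is itself a cable): I would argue that its singular-fibre data is incompatible with having a singular fibre of order exactly $s$ together with one of order $|q'rs-p|$, typically because the core of the Dehn filling on the $K$ side, by Theorem~\ref{thm:knot_JSJ_filling_component}, is a singular fibre of order a multiple of $|q|$ (cases (3),(4)) or the piece is hyperbolic (cases (1),(2)), and $|q|$ prime forces a mismatch — e.g., a hyperbolic surgered piece cannot equal a Seifert fibred one, and a Seifert surgered piece on the $K$ side has its distinguished singular fibre of order divisible by the prime $|q|$, which cannot match unless $|q|\mid s$ or $|q|\mid |q'rs-p|$, and one then rules these out using the genus bound $g(C_{r,s}(K))>g(K)$ to see $K'\not\cong K$ is not the obstruction but rather the fibre orders genuinely differ.

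In the second case ($|q'rs-p|=1$), the surgered piece of $K'(p/q')$ is $Y_1'(p/(q's^2))=$ the surgered piece of $K(p/(q's^2))$; combined with Theorem~\ref{thm:sorya_main} this would force the surgered piece of $K(p/q)$ to be diffeomorphic to that of $K(p/(q's^2))$, and then — tracking the core curves, which in both the hyperbolic and Seifert cases are canonically the distinguished geodesic or distinguished singular fibre — one concludes $q\equiv q's^2$ in an appropriate sense, i.e. the surgery slopes on the $K$ side agree, whence $q=q's^2$; but $|q|$ is prime and $s^2>1$ makes this impossible. The main obstacle I expect is bookkeeping the two sub-cases of Theorem~\ref{thm:JSJ_filling} on the cable side simultaneously with the sub-cases of Theorem~\ref{thm:knot_JSJ_filling_component} on the $K$ side, and in particular making the order-of-singular-fibre comparison rigorous when $Y_0$ itself is a cable space or composing space; the primality of $|q|$ is the key arithmetic lever that collapses all these possibilities, so the heart of the proof is isolating exactly where it gets used.
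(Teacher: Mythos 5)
Your case split on $|q'rs-p|=1$ is the right one, and in the second case you have correctly located the arithmetic punchline (a prime $|q|$ cannot equal $\pm q's^2$ with $s>1$). However, both halves of your argument lean on Theorem~\ref{thm:sorya_main}, which requires $|q|,|q'|\geq C_{\text{JSJ}}(K)$ --- a hypothesis the lemma does not grant. Note that $C_{\text{JSJ}}(K)$ bounds the denominators $|q|,|q'|$, not $p$, so ``taking $p$ large enough'' does not supply it; as written you would only prove a weaker statement than the one the paper needs (and uses) under the hypotheses $|q|,|q'|>2$. The paper's proof avoids Sorya's matching theorem entirely. In the case $|q'rs-p|\neq 1$ it simply counts JSJ components: by Theorem~\ref{thm:JSJ_filling}, $K'(p/q')$ retains the cable space $C_{r,s}$ (now Dehn filled) as an extra Seifert piece, so it has one more JSJ component than $K(p/q)$, and no diffeomorphism can exist. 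This needs only $|q|,|q'|>2$. Your alternative fibre-order comparison, besides needing the stronger hypotheses, is not carried through and contains an error: in case (4) of Theorem~\ref{thm:knot_JSJ_filling_component} the distinguished singular fibre has order $|qrs-p|$, which is coprime to $q$, not ``a multiple of $|q|$''; and the hyperbolic sub-case with $k=0$ requires $|q'|>8$, not $|q'|>2$, so ``hyperbolic cannot equal Seifert fibred'' is not available at the stated level of generality.

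In the case $|q'rs-p|=1$, the paper applies Lemma~\ref{lemma:cable_filling} to get $K'(p/q')\cong K(p/(q's^2))$, hence $K(p/q)\cong K(p/(q's^2))$, and then invokes Ni--Wu's cosmetic surgery theorem to conclude $q=\pm q's^2$ directly, contradicting primality. Your proposal reaches the same identification of manifolds but then tries to extract $q=q's^2$ by matching surgered pieces and tracking core curves; that step is exactly where your argument is not justified (it again needs the large-denominator hypotheses, plus an appeal to the knot complement theorem to turn ``core curves correspond'' into ``slopes correspond''), whereas the cosmetic-surgery theorem gives the slope equality with no geometric input. I would encourage you to replace both of your mechanisms with these two cheaper tools: JSJ component counting in the first case, and Ni--Wu in the second.
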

\begin{proof}
    Suppose $K(p/q)\cong  K'(p/q')$. By Theorem~\ref{thm:JSJ_filling} we must have $|q'r s-p|=1$, as otherwise $K(p/q)$ will have one fewer JSJ-component than $K'(p/q')$. If $|q'rs-p|=1$, then $K(p/q)\cong  K'(p/q')\cong  K(p/q's^2)$. Hence, by \cite[Theorem 1.2]{Ni_Cosmetic}, we must have $q=\pm q's^2$, contradicting the primality of $q$.
\end{proof}

\begin{lemma:hyp_reduction}
Suppose $K'$ is a non-hyperbolic knot with $K'(p/q')$ hyperbolic for $|q'|>2$. Then there is a hyperbolic knot $K''$ with $K''(p/(q'\cdot w^2))\cong  K'(p/q')$ for some $w>1$.  
\end{lemma:hyp_reduction}

\begin{proof}
    Let $K'$ have JSJ-decomposition $Y'_0\cup \ldots \cup Y'_k$. If $|q'|>2$, then by Theorem~\ref{thm:JSJ_filling} $K'(p/q')$ has JSJ-decomposition $Y'_0(p/q')\cup Y'_1\cup \ldots \cup Y'_k$ or $Y'_1(p/(q's^2))\cup Y'_2\cup \ldots \cup Y'_k$. Moreover, in the latter case, $K'=C_{r,s}(K'')$ and $|qrs-p|=1$. In the former case, for $K'(p/q')$ to be hyperbolic, we must have $k=0$ and by Theorem~\ref{thm:knot_JSJ_filling_component} $Y'_0$ is a hyperbolic knot complement. In the latter case, for $K'(p/q')$ to be hyperbolic, we must have $k=1$ and $Y'_1\cong  S^3\setminus K''$ must be hyperbolic. Finally, observe that in the latter case, since $|qrs-p|=1$ and $K'=C_{r,s}(K'')$ by Lemma~\ref{lemma:cable_filling} $K'(p/q')\cong  K''(p/(q's^2))$.
\end{proof}

\subsection{Hyperbolic Dehn fillings} Finally, let us briefly introduce some results to tackle Dehn fillings of hyperbolic manifolds. These techniques are based on the angle-deformation techniques of Hodgeson and Kerckhoff \cite{HK_Angle_Annals}, and we will also be referencing the work of Futer, Purcell, and Schleimer \cite{Futer_Purcell_Schleimer_bound}.\\

Consider a hyperbolic 3-manifold $M$ with torus boundary $T\subset \partial M$. By considering a maximal horoball about $T$, $T$ inherits a canonical Euclidean metric. Given a slope $r\in H_1(T;\Z)$, we may consider two quantities: the {\it length} $l(r)$ of a geodesic representative of $r$ with respect to this Euclidean metric and the {\it normalised length} $\hat{L}(r):=l(r)/\sqrt{\text{Area}(T)}$. If $l(r)$ or $\hat{L}(r)$ is large, we know that the Dehn filling $M(r)$ of $M$ along $r$ is hyperbolic and, in fact, we obtain information about the hyperbolic geometry of $M(r)$. To be precise, we have the following.

\begin{thm} \label{thm:Dehn_surgery_bounds} Consider a hyperbolic 3-manifold $M$ and a slope $r$ in a boundary torus of $\partial M$. If $\hat{L}(r)\geq 10.69$, then 
\begin{itemize}
\item[-] $M(r)$ is hyperbolic, 
\item[-] the core curve $\gamma$ of the Dehn filling $M(r)$ is a geodesic in $M(r)$ of length $l(\gamma)<2\pi/(\hat{L}^2-28.78)$, 
\item[-] if the shortest geodesic of $M$ has length $\text{sys}(M)$, then every geodesic in $M(r)$ that is not $\gamma$ has length at least $\min\{0.0735, 0.5052 \cdot\text{sys}(M)\}.$
\end{itemize}
\end{thm}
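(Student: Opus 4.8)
\emph{The plan.} This theorem is a repackaging of the effective hyperbolic Dehn filling machinery, built on the cone--manifold deformation theory of Hodgson--Kerckhoff \cite{HK_Angle_Annals} and made quantitative in the precise form we need by Futer--Purcell--Schleimer \cite{Futer_Purcell_Schleimer_bound}. The first two bullet points should be read off directly: the hypothesis $\hat L(r)\ge 10.69$ is exactly the normalised-length threshold beyond which \cite{Futer_Purcell_Schleimer_bound} guarantees that $M(r)$ is hyperbolic and that the core of the filling solid torus is isotopic to a closed geodesic $\gamma$ whose complex length $\mathcal L(\gamma)$ satisfies $|\mathcal L(\gamma)|<2\pi/(\hat L(r)^2-28.78)$; taking real parts gives the stated bound $l(\gamma)<2\pi/(\hat L^2-28.78)$. (The constants $10.69$ and $28.78$ are precisely theirs; weaker explicit numerics could instead be extracted straight from \cite{HK_Angle_Annals}.)

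\emph{The third bullet point.} Here I would invoke the other main output of \cite{Futer_Purcell_Schleimer_bound}: for $\hat L(r)\ge 10.69$ there is an embedded tube $\mathbb T$ about $\gamma$ in $M(r)$, a horoball neighbourhood $\mathcal H$ of the filled cusp in $M$, and a diffeomorphism $\Phi\colon M\setminus\mathcal H\to M(r)\setminus\mathbb T$ which is $J$-bilipschitz with $1/J\ge 0.5052$ (this is where the constant $0.5052$ enters). Let $\delta$ be a closed geodesic in $M(r)$ with $\delta\ne\gamma$. If $l_{M(r)}(\delta)\ge 0.0735$ we are already done, so assume $l_{M(r)}(\delta)<0.0735$. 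Since $0.0735$ is below the Margulis constant of $\mathbb H^3$, both $\delta$ and $\gamma$ lie in the thin part of $M(r)$, and distinct short geodesics have disjoint embedded tubes; in particular $\delta$ is disjoint from $\mathbb T$ and hence lies in $M(r)\setminus\mathbb T$.

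\emph{The disjoint case.} Set $\delta':=\Phi^{-1}(\delta)\subset M\setminus\mathcal H$, so $\operatorname{length}_M(\delta')\le J\cdot l_{M(r)}(\delta)$. The class $[\delta']\in\pi_1(M\setminus\mathcal H)=\pi_1(M)$ is nontrivial and loxodromic: the inclusion $M(r)\setminus\mathbb T\hookrightarrow M(r)$ induces a $\pi_1$-surjection whose kernel is normally generated by the meridian of $\mathbb T$, so the image of $[\delta']$ in $\pi_1(M(r))$ is $[\delta]\ne 1$, forcing $[\delta']\ne 1$; and if $[\delta']$ were parabolic it would either be conjugate into the filled-cusp subgroup, forcing $[\delta]$ to be a power of $\gamma$ and hence $\delta=\gamma$, or conjugate into another cusp subgroup of $\pi_1(M)$, forcing $\delta$ to be parabolic in $M(r)$ contrary to it being a closed geodesic. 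Thus $[\delta']$ has an $M$-geodesic representative $\hat\delta$ with $\operatorname{length}_M(\hat\delta)\le\operatorname{length}_M(\delta')\le J\cdot l_{M(r)}(\delta)$ and also $\operatorname{length}_M(\hat\delta)\ge\text{sys}(M)$, whence $l_{M(r)}(\delta)\ge\text{sys}(M)/J\ge 0.5052\cdot\text{sys}(M)$. Combining the two cases gives $l_{M(r)}(\delta)\ge\min\{0.0735,\,0.5052\cdot\text{sys}(M)\}$.

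\emph{The main obstacle.} The difficulty is entirely bookkeeping rather than conceptual: one must verify that the explicit constants quoted from \cite{Futer_Purcell_Schleimer_bound} genuinely combine to give $1/J\ge 0.5052$ and the short-geodesic threshold $0.0735$ at the normalised length $\hat L=10.69$, one must make sure that the tube $\mathbb T$ used in the thin-part dichotomy is the same tube relative to which the bilipschitz diffeomorphism $\Phi$ is defined (shrinking to the Margulis tube of $\gamma$ if necessary), and one must be careful with the $\pi_1$-step ensuring that the drilled curve $\delta'$ is loxodromic rather than parabolic, as above. None of these is hard, but getting the numerology to line up with the statement is the place where care is required.
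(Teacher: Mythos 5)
Your proposal is correct in substance and follows the same skeleton as the paper for the first two bullet points: hyperbolicity from the Hodgson--Kerckhoff threshold (the paper actually cites \cite{HK_Angle_Annals} at $\hat L\geq 7.515$ for hyperbolicity, uses the cone-manifold family $M_\alpha$ to see that the core is a geodesic, and cites Corollary~6.13 of \cite{Futer_Purcell_Schleimer_bound} at $\hat L\geq 7.823$ for the bound $l(\gamma)<2\pi/(\hat L^2-28.78)\leq 0.0735$; the stated $10.69$ is simply a common upper threshold). For the third bullet you take a genuinely different, more hands-on route: you rebuild the comparison from the FPS bilipschitz diffeomorphism between thick parts, which forces you to argue that a short $\delta$ misses the tube (Margulis), that $[\delta']$ is loxodromic rather than trivial or parabolic, and to verify that the bilipschitz constant at $\hat L=10.69$ really is $1/J\geq 0.5052$ --- constants you are guessing rather than checking. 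The paper instead cites Corollary~7.20 of \cite{Futer_Purcell_Schleimer_bound}, which is tailored to exactly this situation: if $\gamma$ and $\delta$ are both geodesics in $M(r)$ of length less than $0.0735$, then $\delta$ is a geodesic in the drilled manifold $M(r)\setminus\gamma=M$ with $l_M(\delta)\leq 1.9793\cdot l_{M(r)}(\delta)$, whence $l_{M(r)}(\delta)\geq \mathrm{sys}(M)/1.9793\geq 0.5052\cdot\mathrm{sys}(M)$. That single citation absorbs all of your tube/Margulis/$\pi_1$ bookkeeping and is where the constants $0.0735$ and $0.5052=1/1.9793$ actually come from; your route is viable but would require you to actually carry out the numerical verification you defer, whereas the paper's choice of lemma makes the numerology line up by construction.
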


\begin{proof} By \cite[Theorem 1.3]{HK_Angle_Annals} if $\hat{L}(r)\geq 7.515$ then the Dehn filling $M(r)$ will be hyperbolic. Moreover, \cite{HK_Angle_Annals} proves the existence of a 1-parameter family of cone-manifolds $M_\alpha, \alpha\in [0, 2\pi]$, with $M_0=M$, $M_{2\pi}=M(r)$, and with $M_\alpha, \alpha>0$ homeomorphic to $M(r)$ with geodesic singular locus of cone-angle $\alpha$ along the core curve of the Dehn filling $M(r)$. For an introduction to cone-manifolds see \cite{Futer_Purcell_Schleimer_bound}, however in this proof the only property of cone-manifolds $M_\alpha$ we rely on is that in the (non-singular) hyperbolic manifold $M(r)=M_{2\pi}$ the core curve $\gamma$ of the Dehn filling $ M(r)$ is a geodesic. It is additionally shown in \cite[Corollary 6.13]{Futer_Purcell_Schleimer_bound} that, if $\hat{L}(r)\geq 7.823$, then $l(\gamma)<2\pi/(\hat{L}(r)^2-28.78)\leq 0.0735$. 

Finally, suppose $\delta\subset M(r)$ is some other geodesic of length $l_{M(r)}(\delta)<0.0735$. By our previous discussion $l(\gamma)=l_{M(r)}(\gamma)<0.0735$. Hence, by \cite[Corollary 7.20]{Futer_Purcell_Schleimer_bound}, we know that $\delta$ is a geodesic in $M(r)\setminus\gamma=M$ of length $l_{M}(\delta)\leq 1.9793\cdot  l_{M(r)}(\delta)$. So $l_{M(r)}(\delta)\geq 0.5052 \cdot\text{sys}(M)$.
\end{proof}

For hyperbolic links in $S^3$, filling slopes $p/q$ with large $|q|$, have large normalised length. Let us make this statement precise.

\begin{lem} \label{lemma:length_bound} Let $M$ be a hyperbolic 3-manifold $M$ with a framing of one of its boundary tori $T\subset \partial M$ such that the corresponding Dehn filling $M(1/0)$ is not hyperbolic. Then the normalised length of the slope corresponding to $p/q$ satisfies $\hat{L}(p/q)\geq |q|/5$. 
\end{lem}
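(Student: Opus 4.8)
The plan is to isolate everything in a lower bound on the normalised length of the single slope $\mu$ corresponding to $1/0$, together with a short piece of Euclidean geometry on the cusp torus. Fix the basis $\mu,\lambda$ of $H_1(T;\mathbb Z)$ determined by the framing, so that the slope $p/q$ is $p\mu+q\lambda$ and $1/0$ is $\mu$. A maximal horoball neighbourhood of $T$ endows $T$ with a flat metric, which we view as $H_1(T;\mathbb R)$ with a Euclidean norm under which the geodesic length of a primitive class $v$ is $l(v)=\|v\|$. Writing $A=\mathrm{Area}(T)$ and decomposing $\lambda=\lambda^{\parallel}+\lambda^{\perp}$ into its components parallel and orthogonal to $\mu$, one has $\|\lambda^{\perp}\|=A/l(\mu)$, hence
\[
l(p/q)=\|p\mu+q\lambda\|\ \ge\ \|q\lambda^{\perp}\|\ =\ |q|\,A/l(\mu).
\]
Dividing by $\sqrt{A}$ gives $\hat L(p/q)\ge |q|\sqrt{A}/l(\mu)=|q|/\hat L(\mu)$ (and for $q=0$ there is nothing to prove). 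So it suffices to show $\hat L(\mu)<5$.

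To bound $\hat L(\mu)$ I would invoke the hypothesis through two standard inputs. First, if $l(\mu)>2\pi$, then by the $2\pi$-theorem of Gromov and Thurston the filling $M(1/0)=M(\mu)$ admits a complete, finite-volume Riemannian metric of negative sectional curvature; such a $3$-manifold has (relatively) word-hyperbolic fundamental group, is aspherical, irreducible, atoroidal and not Seifert fibred, so by the Geometrization theorem it is hyperbolic, contradicting the hypothesis. Hence $l(\mu)\le 2\pi$. Second, the maximal cusp cross-section of an orientable hyperbolic $3$-manifold has area $A\ge\sqrt{3}$ (the standard horoball-packing estimate). Combining, $\hat L(\mu)=l(\mu)/\sqrt{A}\le 2\pi/\sqrt[4]{3}<5$, and with the inequality above this yields $\hat L(p/q)\ge|q|/\hat L(\mu)\ge|q|/5$.

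The geometric step is a line or two of linear algebra and the logical skeleton is short, so I expect the only delicate point to be numerical: checking that the ``non-hyperbolic filling $\Rightarrow$ short slope'' constant ($2\pi$ here, or the $6$ of the Agol--Lackenby $6$-theorem, either of which works since $6/\sqrt[4]{3}<5$ as well) divided by the square root of the universal lower bound $\sqrt{3}$ on the maximal cusp area comes out strictly below $5$. One must also be careful that the conventions for $l$, for $\hat L$, and for ``maximal cusp'' used in defining $\hat L$ elsewhere in the paper are precisely the ones under which these cited bounds are stated.
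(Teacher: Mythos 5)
Your reduction is correct and its first half coincides with the paper's argument: setting $h=\mathrm{Area}(T)/l(\mu)$ (your $\|\lambda^{\perp}\|$), both you and the paper prove $l(p/q)\geq |q|\,h$, i.e.\ $\hat{L}(p/q)\geq |q|\sqrt{\mathrm{Area}(T)}/l(\mu)$, and both bound $l(\mu)\leq 6$ by the $6$-theorem. The gap is exactly at the point you flagged as delicate: $\mathrm{Area}(T)\geq\sqrt{3}$ is \emph{not} the standard horoball-packing estimate. The standard argument (the shadows of the full-sized horoballs tangent to $B_{\infty}$ form a single lattice orbit of disjoint discs of diameter $1$, so the cusp lattice has systole at least $1$ and hence covolume at least $\tfrac{\sqrt{3}}{2}$) gives only $\mathrm{Area}(T)\geq\sqrt{3}/2$; upgrading to $\sqrt{3}$ would require exhibiting a second orbit of full-sized horoballs, which the elementary argument does not supply, or invoking the genuinely hard Cao--Meyerhoff bound $\mathrm{Area}(T)\geq 3.35$ for orientable cusped manifolds. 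With the honestly justified $\sqrt{3}/2$ your computation yields only $\hat{L}(\mu)\leq 6\sqrt{2}/\sqrt[4]{3}\approx 6.45$ and hence $\hat{L}(p/q)\geq |q|/6.45$, which does not prove the stated $|q|/5$. Since the entire content of the lemma is the constant, this is a real gap.

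The paper closes it without any lower bound on the cusp area: it adds the Cooper--Lackenby inequality $l(s_1)\,l(s_2)\geq\sqrt{3}\,\Delta(s_1,s_2)$, which together with $l(\mu)\leq 6$ gives a second estimate $l(p/q)\geq\sqrt{3}\,|q|/6$ and hence $\hat{L}(p/q)\geq \sqrt{3}\,|q|/\bigl(6\sqrt{\mathrm{Area}(T)}\bigr)$. Playing this off against your bound $\hat{L}(p/q)\geq|q|\sqrt{\mathrm{Area}(T)}/6$ and minimising the maximum over the unknown area, the worst case is $\mathrm{Area}(T)=\sqrt{3}$, where both bounds equal $|q|\,\sqrt[4]{3}/6\approx 0.219\,|q|>|q|/5$. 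This explains both why your route needs the full $\sqrt{3}$ and how to repair it: keep your first inequality, add the Cooper--Lackenby estimate as a second lower bound on $l(p/q)$, and take the maximum of the two.
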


\begin{proof}
Since $M(1/0)$ is not hyperbolic, the 6-theorem \cite{Agol-6-Thm, Lackenby-6-Thm} tells us that $l(1/0)\leq 6$. Moreover, by \cite[Lemma 2.1]{Cooper_Lackenby_Dehn_Surgery}, we know that for two slopes $s_1, s_2$ on $T$ we have $l(s_1)\cdot l(s_2)\geq \sqrt{3}\cdot \Delta(s_1, s_2)$, where $\Delta(s_1, s_2)$ is the minimal intersection number of $s_1$ and $s_2$. Combining these results, we see that 
$$l(p/q)\geq \sqrt{3}/6\cdot \Delta(p/q, 1/0)=|q|\cdot \sqrt{3}/6.$$ 
Let the parallelogram $P\subset \mathbb R^2$ be the fundamental domain for $T$ corresponding to the generators $1/0$ and $0/1$. In particular $P$ has side-lengths $l(0/1)$ and $l(1/0)$. Think of the edge of $P$ corresponding to $1/0$ as the horizontal base and under this placement let $\text{height}(T)$ be the height of $P$. In particular, $\text{Area}(T)=l(1/0)\cdot \text{height}(T)\leq 6\cdot \text{height}(T)$. Moreover, we see that $l(p/q)\geq |q|\cdot \text{height}(T)$. This implies that 
$$\hat{L}(p/q)\geq |q|\cdot \frac{\sqrt{\text{height}(T)}}{\sqrt{6}}.$$
On the other hand 
$$\hat{L}(p/q)=\frac{l(p/q)}{\sqrt{\text{Area}(T)}}\geq |q|\cdot \frac{\sqrt{3}}{6}\cdot \frac{1}{\sqrt{6\cdot \text{height}(t)}}.$$ Observe that $\max\left\{\frac{h}{6}, \frac{3}{36}\cdot \frac{1}{6\cdot h}\right\}\geq 0.04$ for all $h>0$. Hence $\hat{L}(p/q)\geq 0.2 \cdot |q|$. 
\end{proof}

Let us combine these two results into one more readily accessible corollary. 

\begin{cor} \label{cor:hyp_Dehn_surgery}
    Let $M$ and $N$ be two hyperbolic 3-manifolds with framings on a boundary tori of each such that the corresponding Dehn fillings $M(0/1)$ and $N(0/1)$ are not hyperbolic. We define 
    $$c:=\min\{0.0735, 0.5052\cdot \text{sys}(M)\}\, \textrm{ and } \,D:=\max\left\{10.69, \sqrt{\frac{2\pi}{c}+28.78}\right\}.$$ 
    If $|q|, |q'|>5D$ then, up to isotopy, any diffeomorphism $\varphi\colon M(p/q)\to N(p'/q')$ takes the core curve $\gamma_M$ of the Dehn filling $M(p/q)$ to the core curve $\gamma_N$ of the Dehn filling $N(p'/q')$. 
\end{cor}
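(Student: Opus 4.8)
The plan is to apply Theorem~\ref{thm:Dehn_surgery_bounds} to both $M$ and $N$ and then argue by comparing systoles and the lengths of core curves. First I would use Lemma~\ref{lemma:length_bound}: since $M(0/1)$ is not hyperbolic, the framing on $M$ has the property that the normalised length satisfies $\hat{L}_M(p/q)\geq |q|/5$, and similarly $\hat{L}_N(p'/q')\geq |q'|/5$. Thus if $|q|,|q'|>5D$, then both normalised lengths exceed $D\geq 10.69$, so Theorem~\ref{thm:Dehn_surgery_bounds} applies: $M(p/q)$ and $N(p'/q')$ are hyperbolic, the core curves $\gamma_M,\gamma_N$ are geodesics of length less than $2\pi/(\hat{L}^2-28.78)$, and every other geodesic in $M(p/q)$ (resp.\ $N(p'/q')$) has length at least $c_M:=\min\{0.0735,0.5052\cdot\operatorname{sys}(M)\}$ (resp.\ the analogous $c_N$).

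Next I would turn the normalised-length bound into an absolute upper bound on $l(\gamma_M)$ that falls below $c_M$. By definition of $D$ we have $D\geq\sqrt{2\pi/c+28.78}$ where $c=\min\{0.0735,0.5052\cdot\operatorname{sys}(M)\}=c_M$; hence $\hat{L}_M(p/q)>D$ gives $\hat{L}_M(p/q)^2-28.78>2\pi/c_M$, so $l(\gamma_M)<2\pi/(\hat{L}_M^2-28.78)<c_M$. In particular $\gamma_M$ is a geodesic in $M(p/q)$ strictly shorter than every other geodesic there; by Mostow rigidity this ``uniquely shortest geodesic'' is preserved by any diffeomorphism. Here is the one subtlety: the definition of $D$ in the corollary only refers to $\operatorname{sys}(M)$, not $\operatorname{sys}(N)$, so I do not automatically get $l(\gamma_N)<c_N$ from the hypothesis $|q'|>5D$. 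To handle this I would instead transport geometry across $\varphi$: since $\varphi\colon M(p/q)\to N(p'/q')$ is a diffeomorphism between hyperbolic manifolds, Mostow rigidity makes it isotopic to an isometry, so the lengths of geodesics, and in particular the value of the shortest-geodesic length, agree on both sides. Thus $\varphi$ carries the uniquely shortest geodesic $\gamma_M$ of $M(p/q)$ to the uniquely shortest geodesic of $N(p'/q')$; it remains only to identify the latter with $\gamma_N$, i.e.\ to check $\gamma_N$ is the shortest geodesic of $N(p'/q')$.

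For that last identification I would note that $l_{N(p'/q')}(\gamma_N)=l_{M(p/q)}(\gamma_M)<c_M$, and also (applying Theorem~\ref{thm:Dehn_surgery_bounds} to $N$ with $\hat{L}_N>D\geq 10.69$) every geodesic of $N(p'/q')$ other than $\gamma_N$ has length at least $c_N>0$. If $c_M\leq c_N$ we are done immediately. If $c_M>c_N$, then $c_N<0.0735$ forces $c_N=0.5052\cdot\operatorname{sys}(N)$, and I would compare $l(\gamma_N)$ against $c_N$ directly: from $\hat{L}_N(p'/q')^2>D^2\geq 2\pi/c_M+28.78>2\pi/c_N+28.78$ (using $c_M>c_N$) we still get $l(\gamma_N)<2\pi/(\hat{L}_N^2-28.78)<c_N$, so $\gamma_N$ is again strictly shorter than every competing geodesic. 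Either way $\gamma_N$ is the uniquely shortest geodesic of $N(p'/q')$, and since $\varphi$ (up to isotopy an isometry) must send the uniquely shortest geodesic of the domain to that of the target, $\varphi(\gamma_M)=\gamma_N$ up to isotopy, as claimed.

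The main obstacle I anticipate is precisely the asymmetry in the definition of $D$: the corollary's constant is built only from $\operatorname{sys}(M)$, so one cannot blindly quote the ``every other geodesic is long'' bullet of Theorem~\ref{thm:Dehn_surgery_bounds} symmetrically for $N$. The resolution — pushing all metric information through the Mostow isometry rather than comparing the two manifolds' geometries independently, together with the short case analysis $c_M\lessgtr c_N$ above — is routine but is the step that needs care. Everything else (the normalised-length estimate from Lemma~\ref{lemma:length_bound}, the arithmetic verifying $l(\gamma)<c$ from the definition of $D$) is a direct substitution.
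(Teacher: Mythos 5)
Your overall strategy is the paper's: Lemma~\ref{lemma:length_bound} converts $|q|,|q'|>5D$ into $\hat{L}\geq D$ on both cusps, Theorem~\ref{thm:Dehn_surgery_bounds} then makes both fillings hyperbolic with short core geodesics, and rigidity (the paper cites Gabai--Meyerhoff--Thurston to upgrade the diffeomorphism to an isometry up to isotopy) transports geodesics. You also correctly flag the real subtlety, namely that $c$ and $D$ are built only from $\operatorname{sys}(M)$. But your resolution of that subtlety has a genuine error. In the case $c_M>c_N$ you write $2\pi/c_M+28.78>2\pi/c_N+28.78$ ``using $c_M>c_N$''; the inequality goes the other way ($c_M>c_N$ gives $2\pi/c_M<2\pi/c_N$), so you cannot conclude $l(\gamma_N)<c_N$, and hence cannot conclude that $\gamma_N$ is the uniquely shortest geodesic of $N(p'/q')$. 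Indeed, with the stated hypotheses one has no a priori control on $\operatorname{sys}(N)$, so there is no way to certify $\gamma_N$ as the shortest geodesic of the target before using $\varphi$. (Separately, your interim claim $l_{N(p'/q')}(\gamma_N)=l_{M(p/q)}(\gamma_M)$ presupposes $\varphi(\gamma_M)=\gamma_N$ and is circular, though you do not ultimately rely on it.)

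The fix is short and is exactly what the paper does: run the argument in $M(p/q)$ only. From $\hat{L}_N\geq D\geq\sqrt{2\pi/c+28.78}$ one gets $l(\gamma_N)<2\pi/(\hat{L}_N^2-28.78)\leq c$, with $c$ the constant built from $\operatorname{sys}(M)$; since $\varphi^{-1}$ is isotopic to an isometry, $\varphi^{-1}(\gamma_N)$ is a geodesic of $M(p/q)$ of length less than $c$, and by the third bullet of Theorem~\ref{thm:Dehn_surgery_bounds} the core $\gamma_M$ is the \emph{only} such geodesic. Hence $\varphi^{-1}(\gamma_N)=\gamma_M$ up to isotopy, with no case analysis and no reference to $\operatorname{sys}(N)$. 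Your first case ($c_M\leq c_N$) is fine, but you should replace the second case by this pullback argument.
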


\begin{proof} Suppose $|q|, |q'|>5D$. Then, by Lemma \ref{lemma:length_bound}, $\hat{L}(p/q)\geq D$. Furthermore, by Theorem \ref{thm:Dehn_surgery_bounds}, $M(p/q)$ is hyperbolic and the core curve $\gamma_M$ is the only geodesic in $M(p/q)$ of length less than $c$. Similarly, since $|q'|>5D$, $N(p'/q')$ is hyperbolic and the core curve $\gamma_N$ is a geodesic of length less than $c$. By \cite{Gabai_Meyerhoff_Thurston_Isotopy_Hyperbolic}, we know that $\varphi$ is isotopic to an isometry and hence $\varphi^{-1}(\gamma_N)$ must be a geodesic of length less than $c$. Hence, up to isotopy, $\varphi(\gamma_M)=\gamma_N$. 
\end{proof}

\subsection{Existence of strongly-characterising slopes}
We are now ready to prove our main result. 

\begin{thm}
    Any non-trivial knot $K\subset S^3$ has infinitely many strongly-character\-is\-ing slopes. 
\end{thm}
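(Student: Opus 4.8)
The plan is to run the argument of the preceding (hyperbolic) theorem, with Thurston's hyperbolic Dehn surgery theorem and Mostow rigidity replaced by the control over JSJ-decompositions under Dehn filling recorded in Theorems~\ref{thm:knot_JSJ_filling_component}, \ref{thm:JSJ_filling}, and~\ref{thm:sorya_main}. Since hyperbolic knots have already been shown to have infinitely many strongly-characterising slopes, we may assume $K$ is not hyperbolic; then $S^3\setminus N(K)$ is either Seifert fibred (so $K$ is a torus knot) or has a non-trivial JSJ-decomposition, with outermost piece $Y_0$ of one of the four types in Budney's Theorem~\ref{thm:Budney_knot_JSJ}.

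As in the hyperbolic case, I would fix an odd prime $q$ and search for slopes $p/q$ with $p$ a large prime. By the linking-form computation (Propositions~\ref{prop:linking_form_formula} and~\ref{prop:linking_framing}), if $K(p/q)\cong K'(p/q')$ then $p$ is determined and $q\cdot q'$ is a quadratic residue mod $p$; choosing $p\equiv 1\pmod 8$ so that $-1$ and all integers up to a constant $C(K)$ are quadratic residues mod $p$ --- possible for infinitely many $p$ by quadratic reciprocity, the Chinese remainder theorem, and Dirichlet's theorem, exactly as before --- and taking $q$ to be a non-residue mod $p$, forces $|q'|>C(K)$. We take $C(K)$ large enough to dominate the constant $C_{\text{JSJ}}(K)$ of Theorem~\ref{thm:sorya_main} and all the hyperbolic-geometry thresholds of Section~\ref{section:hyperbolic_knots}, so that this machinery applies to both $q$ and $q'$. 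We additionally require $p$ to avoid a finite list of ``bad'' values: those for which the surgered piece $Y_0(p/q)$ degenerates (to a solid torus, a lens space, an exceptional filling of a hyperbolic piece, or a Seifert fibred space whose exceptional fibre from the filling fails to have strictly maximal order), those with $|qrs-p|=1$ when $Y_0$ is an $(r,s)$-cable space, and those for which the Seifert invariants of $Y_0(p/q)$ could be matched by a Dehn filling of a different knot complement. For our fixed $q$ there are only finitely many such $p$, so these conditions still leave infinitely many admissible primes in the relevant arithmetic progression.

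Now suppose $K(p/q)\cong K'(p/q')$ via an orientation-preserving diffeomorphism $\varphi$. Since $q$ is prime and $|q|,|q'|>2$, Lemma~\ref{lemma:non-cable} rules out $K'$ being a non-trivial cable of $K$; since $|q|,|q'|\geq C_{\text{JSJ}}(K)$, Theorem~\ref{thm:sorya_main} shows $\varphi$ carries the surgered piece $S$ of $K(p/q)$ to the surgered piece $S'$ of $K'(p/q')$, and our choice of $p$ (excluding $|qrs-p|=1$) gives $S=Y_0(p/q)$. The crucial observation is that, for such $p,q$, the core curve $\gamma$ of the Dehn filling is \emph{intrinsically} distinguished inside $S$: by Theorem~\ref{thm:knot_JSJ_filling_component}, $\gamma$ is the unique shortest geodesic when $S$ is hyperbolic (via the quantitative estimates of Theorem~\ref{thm:Dehn_surgery_bounds} and Mostow--Gabai--Meyerhoff--Thurston rigidity), and otherwise it is the unique singular fibre of $S$ of maximal order (via Waldhausen's uniqueness of Seifert fibrations, Theorem~\ref{thm:Seifert_unique}, and the arithmetic arrangement that this order strictly dominates --- when $Y_0$ is a composing space there is only one singular fibre at all). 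As $S\cong S'$, the same description pins down $\varphi(\gamma)$ inside $S'$; applying Theorem~\ref{thm:knot_JSJ_filling_component} to $K'$, and using the primality of $q$ once more to exclude the last possibility that $K'$ is a cable whose cable space collapses, identifies the core $\gamma'$ of the $K'$-filling inside $S'$ in the same way, so $\varphi(\gamma)=\gamma'$. Removing open neighbourhoods of these curves, $\varphi$ restricts to an orientation-preserving diffeomorphism $S^3\setminus N(K)=K(p/q)\setminus N(\gamma)\to K'(p/q')\setminus N(\gamma')=S^3\setminus N(K')$ that sends the meridian of $\gamma$ (the slope $p/q$) to the meridian of $\gamma'$ (the slope $p/q'$); by the Gordon--Luecke theorem $K=K'$, and then $q=q'$. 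Thus $p/q$ is strongly-characterising, and there are infinitely many such slopes.

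The step I expect to be the main obstacle is identifying the core curve in the Seifert-fibred cases: one must prevent $\varphi$ from permuting $\gamma$ with another singular fibre, and prevent $K'$ from being an unexpectedly different Seifert surgery --- for instance a torus knot $T_{a',b'}$ with the r\^oles of the exceptional fibres interchanged, or a cable of such. This is handled by the elementary but somewhat lengthy bookkeeping showing only finitely many primes $p$ are ``bad'', together with Moser's classification of torus-knot surgeries (Proposition~\ref{lemma:torus_knot_filling}) and the rigidity of Seifert fibrations; by contrast the hyperbolic surgered pieces are dispatched cleanly by the estimates of Section~\ref{section:hyperbolic_knots}, and the reduction of the global gluing problem to a single JSJ-piece is exactly Sorya's Theorem~\ref{thm:sorya_main}.
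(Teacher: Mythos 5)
Your proposal follows essentially the same route as the paper's proof: linking forms plus quadratic reciprocity, the Chinese remainder theorem, and Dirichlet's theorem to force $|q'|$ large; Theorem~\ref{thm:sorya_main} to localise the diffeomorphism to the surgered JSJ-piece; and a case analysis over Budney's four types of outermost piece, identifying the core curve via short geodesics in the hyperbolic case and via uniqueness of Seifert fibrations otherwise. The only deviations are cosmetic --- the paper dispatches the torus-knot and cable cases by explicit arithmetic on the singular-fibre orders and the boundary fibre slope rather than by your uniform ``maximal-order fibre'' device, but the underlying bookkeeping is the same.
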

\begin{proof}
    Consider some slope $p/q\in \mathbb Q$ such that $K(p/q)\cong  K'(p/q')$ for some knot $K'\subset S^3$ not isotopic to $K$. Moreover, let $K'$ be such a knot with the fewest JSJ-components. We will begin by making a list of assumptions on $p, q, q'$ until we reach a contradiction. \\

    \noindent
    Let $S^3 \setminus N(K)$ have JSJ-decomposition $Y_0\cup \ldots \cup Y_k$ with outermost JSJ-component $Y_0$. Assume that $p\not \equiv \pm 1 \pmod q$ and that $|q|>2$. Then by Theorem~\ref{thm:JSJ_filling} the JSJ-decomposition of $K(p/q)$ is $Y_0(p/q)\cup Y_1\cup \ldots \cup Y_k$.\\
    
    \noindent
    Let $S^3 \setminus N(K')$ have JSJ-decomposition $Y'_0\cup \ldots\cup Y'_{k'}$ with $Y'_0$ the outermost JSJ-component. Assume that $|q'|>2$, so that by Theorem~\ref{thm:JSJ_filling}, we either have that $K'$ is a cable $K'=C_{r,s}(K'')$ and $K'(p/q')\cong  K''(p/(q's^2)$ or that the JSJ-decomposition of $K'(p/q')$ is $Y'_0(p/q')\cup Y'_1 \cup \ldots \cup Y'_k$. Assume additionally that $|q|$ is prime so that, by Lemma~\ref{lemma:non-cable}, $K(p/q)\not\cong  (C_{r,s}(K))(p/q')$. In particular, in the first case $K''\neq K$, contradicting the assumption that $K'$ is the knot with fewest JSJ-components admitting Dehn filling to $K(p/q)$ and with $K'\neq K$. Hence, $K'(p/q')$ has JSJ-decomposition $Y'_0(p/q')\cup Y'_1\cup \ldots \cup Y'_k$. Finally we asumme that $|q|, |q'|>C_{\text{JSJ}}(K)$, for $C_{\text{JSJ}}(K)$ the constant given by Theorem~\ref{thm:sorya_main}. Hence the diffeomorphism $K(p/q)\cong  K'(p/q')$ maps the surgered pieces $Y_0(p/q)$ and $Y'_0(p/q')$ to each other. \\
    
    \noindent
    Since $Y_0$ is part of a JSJ-decomposition of a knot, it must be one of the cases in Theorem~\ref{thm:Budney_knot_JSJ}. Let us consider these cases separately. \\

    \noindent\textbf{Case 1.} $Y_0$ is the complement of a torus knot $T_{a,b}$ in $S^3$: \\
    By Proposition~\ref{lemma:torus_knot_filling}, $Y_0(p/q)$ is a Seifert fibred space over $S^2$ with singular fibres of orders $a, b$, and $abq-p$. Since $p\not\equiv \pm 1 \pmod p$, $Y_0(p/q)$ has three non-trivial singular fibres and hence by Theorem~\ref{thm:Seifert_unique} a unique Seifert structure up to isomorphism. In particular, $Y'_0(p/q')$ is closed and hence $Y'_0$ is hyperbolic or the complement of a torus knot. If $Y'_0$ is hyperbolic and we additionally assume $|q'|>8$, then $Y'_0(p/q')$ is also hyperbolic. So $Y'_0$ is the complement of a torus knot $T_{u,v}$ and $Y'_0(p/q')$ is a Seifert fibred space over $S^2$ with singular fibres of orders $u, v, uvq'-p$. Since $Y_0(p/q)$ has a unique Seifert structure over an oriented surface up to isomorphism, we know that the multisets $\{a, b, abq-p\}$ and $\{u, v, uvq'-p\}$ are equal. However, our assumption that $K\neq K'$ implies that $\{u,v\}\neq \{a,b\}$. Hence, possibly swapping $u,v$, we have 
    $$u=a, v=abq-p, uvq'-p=b  \text{ or } u=b, v=abq-p, uvq'-p=a, $$
    which implies that 
    $$ q'=-\frac{p+b}{ap-a^2bq} \text{ or } q'=-\frac{p+a}{bp-ab^2q}.$$
    Since, $a, b>1$ there exists some constant $C(a,b,q)$, depending only on $a,b$, and $q$, such that if $p>C(a,b,q)$ then in either case $-1<q'<1$. This is a contradiction, since $q'$ is a non-zero integer. So let us assume $p>C(a,b,q)$. \\

    \noindent\textbf{Case 2.} $Y_0$ is a composing space:\\
    By Theorem~\ref{thm:knot_JSJ_filling_component}, $Y_0(p/q)\cong  Y'_0(p/q')$ is a Seifert fibred space over an oriented surface. Since $Y_0(p/q)$ has at least two boundary components, $Y'_0$ is not a torus knot or cable space. If $Y'_0$ were a hyperbolic space, since we assumed $|q'|>2$, $Y'_0(p/q')$ would also be hyperbolic. So $Y'_0$ must also be a composing space. Hence, by Theorem~\ref{thm:knot_JSJ_filling_component}, we know that $Y_0(p/q), Y'_0(p/q')$ are both Seifert fibred over an oriented surface with exactly one singular fibre of orders $|q|, |q'|$ respectively. Moreover, these singular fibres are the core curves of the Dehn filling. Finally, by Theorem~\ref{thm:Seifert_unique}, these Seifert fibrations are unique up to isotopy and hence $|q|=|q'|$ and the two core curves are isotopic. Hence $K=K'$, a contradiction.\\
    
    \noindent\textbf{Case 3.} $Y_0$ is hyperbolic: \\
    By Theorem ~\ref{thm:knot_JSJ_filling_component}, $Y_0(p/q)\cong  Y'_0(p/q')$ is a hyperbolic space. Hence, $Y'_0$ must also be a hyperbolic space. Let $\varphi\colon K(p/q)\to K'(p/q')$ be a diffeomorphism between the two Dehn fillings. We have already assumed, that $\varphi$ maps the surgered pieces $Y_0(p/q)$ and $Y'_0(p/q')$ to each other, so consider the restriction $\varphi|_{Y_0(p/q)}$. By Corollary \ref{cor:hyp_Dehn_surgery}, there exists some $C_{\text{hyp}}(Y_0)$, depending only on $Y_0$ such that if $|q|, |q'|>C_{\text{hyp}}(Y_0)$, then, up to isotopy, $\varphi|_{Y_0(p/q)}$ takes the core curve of the Dehn filling in $Y_0(p/q)$ to the core curve of the Dehn filling of $Y'_0(p/q')$. In particular, $K=K'$, a contradiction.\\

    \noindent\textbf{Case 4.} $Y_0$ is a cable space $C_{r,s}$:\\
    Since $Y_0(p/q)\cong  Y'_0(p/q')$ has exactly one boundary component, we know that $Y'_0$ must be hyperbolic or a cable space. If $Y'_0$ were hyperbolic, then $Y'_0(p/q')$ would also be hyperbolic, a contradiction. So $Y'_0$ is a cable space $C_{r', s'}$. Now observe that $Y'_1\cup \ldots\cup Y'_k\cong  Y_1\cup \ldots\cup Y_k\subset S^3$ has one boundary component, so is diffeomorphic to $S^3\setminus N(J)$ for some knot $J$. In particular $K=C_{r,s}(J), K'=C_{r', s'}(J)$ are both cables. The Seifert fibration on $Y_0(p/q)$ is unique up to isotopy and is an extension of the Seifert fibration on $Y_0$. By Example~\ref{ex:SFS_cable}, the Seifert fibration on $Y_0$ has  fibre in $\partial Y_0(p/q)$ of slope $r/s$ with respect to the Seifert framing given by $S^3\setminus N(J)$. However, by uniqueness, this is also the slope $r'/s'$ of a fibre in the Seifert fibration of $Y'_0$ on $\partial Y'_0(p/q')$. Hence $r/s=r'/s'$ and thus $K=K'$, a contradiction. \\
    
    \noindent 
    In summary we know that if $K(p/q)\cong  K'(p/q')$ for $K'\neq K$ and $K'$ having as few JSJ-components as possible, then $p,q, q'$ cannot satisfy all of the following: 
    \begin{enumerate}
\item $|q|, |q'|>2$, 
\item $|q'|>8$ if $K$ is a torus knot, 
 \item $p\not \equiv \pm 1 \pmod q$, 
\item $q$ is prime, 
\item $|q|, |q'|>C_{\text{JSJ}}(K)$,
\item if $K$ is a torus knot $T_{a,b}$ then $p>C(a,b,q)$ as given in Case 1 above,
\item if $Y_0$ is hyperbolic, then $|q|, |q'|>C_{\text{hyp}}(Y_0)$, as given in Case 3 above.\\
\end{enumerate}

\noindent
To conclude the proof, we will show there are infinitely many pairs $p,q$ such that if $K(p/q)\cong  K'(p/q')$ then $p, q, q'$ satisfy all 6 conditions above. Let $C(K)$ be the maximum of $C_{\text{JSJ}}(K)$, $C_{\text{hyp}}(Y_0)$ if $Y_0$ is hyperbolic, and $8$. Let $p_1, \ldots, p_k$ denote the set of all odd primes that are less or equal to $C(K)$. Let $q>C(K)$ be some additional prime with $q\equiv 1 \pmod 4$ and let $r$ be some quadratic non-residue mod $q$. Then by the Chinese remainder theorem and Dirichlet's theorem on primes in arithmetic progressions, we may find infinitely many primes $p$ satisfying the following: 
\begin{enumerate}
    \item $p\equiv 1\pmod 8$, 
    \item $p\equiv 1\pmod {p_i}, i=1, \ldots, k$, 
    \item $p\equiv r\pmod q$, 
   \item if $K$ is a torus knot $T_{a,b}$ then $p>C(a,b,q)$. 
\end{enumerate}
Fix such $p,q$. Then $p\equiv 1 \pmod 4$ and $p$ is a quadratic residue mod $p_i, i=1,\ldots, k$. Hence, by quadratic reciprocity, $p_i, i=1\ldots,k$, are all quadratic residues mod $p$. Since $p\equiv 1 \pmod 8$ both $-1$ and $2$ are quadratic residues mod $p$. Thus in summary, any integer $z$ with $|z|\leq C(K)$ is a product of quadratic residues mod $p$ and hence itself a residue mod $p$. \\
By Proposition~\ref{prop:linking_framing}, we know that if $K(p/q)\cong  K(p/q')$ then $q\cdot q'$ is a quadratic residue mod $p$. Moreover, by quadratic reciprocity, $q$ is not a residue mod $p$. Hence $q'$ must be a non-residue and hence $|q'|>C(K)$. Finally, since $p$ is a non-residue mod $q$, we know $p\not\equiv \pm 1 \pmod q$. In summary, this choice of $p,q$ satisfies all 6 conditions in the discussion above, a contradiction. Thus, we have infinitely many strongly-characterising slopes $p/q$ for $K$. 
\end{proof}

\begin{rem}
The constants $C(a,b,q)$, $C_{\text{JSJ}}(K)$, and $C_{\text{hyp}}(K)$ are computable and details can be found in \cite{Wakelin_Sorya_effective_char}. Moreover, there are effective versions of Dirichlet's theorem on primes in arithmetic progressions~\cite{Linnik_Dirichlet_1, Linnik_Dirichlet_2}. Hence, although the authors avoid doing this, these results can be made effective. 
\end{rem}

\section{Integer slopes}\label{section:integer_slopes}
As discussed in the introduction, integer slopes play a special role when discussing Dehn surgery, and we wish to find examples of integer strongly-characterising slopes. In this section, we show that sufficiently large integer slopes being characterising for a knot $K$ is closely related to sufficiently large integer slopes being strongly-characterising for $K$. To be precise, we show the following two results.

\begin{prop}\label{prop_integer_1}
    Consider a hyperbolic knot $K\subset S^3$ such that every sufficiently large integer slope is characterising for $K$. Then there are infinitely many integer strongly-characterising slopes for $K$.
\end{prop}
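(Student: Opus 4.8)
The plan is to leverage the hyperbolic machinery already developed in Section~\ref{section:hyperbolic_knots}, but to run it along the reciprocal of an integer slope rather than a general slope, so that we stay within the integer family. Concretely, if $n$ is an integer slope, then $K(n)$ is the Dehn filling along the slope $n\cdot \mu + 1\cdot \lambda$, so from the point of view of the complement $S^3\setminus N(K)$ this is a slope with ``$q=1$''. The subtlety is that the strongly-characterising arguments of Section~\ref{section:hyperbolic_knots} crucially use $|q|, |q'|$ large, which fails here. So the first step is to separate the two ways $K(n)$ can be realised: either as $K'(n)$ for the \emph{same} integer slope $n$, which is handled by the hypothesis that large integer slopes are characterising for $K$, or as $K'(p/q')$ with $|q'|>1$; and it is only the second case that needs new work.

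The second step is to control the second case using the linking-form obstruction of Proposition~\ref{prop:linking_framing}. If $K(n)\cong K'(p/q')$, then first $H_1$ forces $|n|=p$ (taking $n>0$ without loss of generality, using the cosmetic surgery results already cited so that $-n$ need not be considered separately), and then the linking form gives $1\cdot q'\equiv m^2\pmod p$, i.e.\ $q'$ must be a quadratic residue mod $p=n$. The idea is to choose the integer slopes $n$ to be primes $n\equiv 1\pmod 4$ that are large, with the additional arithmetic constraints exactly as in the proof of the hyperbolic case: all integers up to the relevant constant $C(K)$ are quadratic residues mod $n$, while some convenient prime is a non-residue mod $n$. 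Then for $|q'|$ bounded by $C(K)$ we get a contradiction via quadratic reciprocity, and for $|q'|>C(K)$ (in particular $|q'|>2$) we can feed $K'(p/q')\cong K(n)$ into the Section~\ref{section:hyperbolic_knots} / Section~\ref{section:general_knots} machinery. Here we want to use that $K$ is hyperbolic: if $|q'|$ is large and $K'$ is hyperbolic, Theorem~\ref{thm:lackenby_char_slope} forces $K=K'$ and $p/q'=n/1$, contradicting $|q'|>1$; and if $K'$ is not hyperbolic, Lemma~\ref{lemma:hyp_reduction} replaces $K'$ by a hyperbolic $K''$ with $K''(p/(q'w^2))\cong K(n)$ and $w>1$, again giving, for $|q'w^2|$ large, that $K(n)\cong K''(p/(q'w^2))$ with denominator $>1$, contradiction. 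Notice that $K(n)$ being hyperbolic — needed to even start this — follows from Thurston's Dehn surgery theorem once $n$ is large, since we are free to enlarge $n$.

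The third step is purely arithmetic bookkeeping: one fixes an auxiliary prime $q$ with $q\equiv 1\pmod 4$ larger than $C(K)$ and a non-residue $r$ mod $q$, lists the odd primes $p_1,\dots,p_k\le C(K)$, and applies the Chinese remainder theorem together with Dirichlet's theorem on primes in arithmetic progressions to produce infinitely many primes $n$ with $n\equiv 1\pmod 8$, $n\equiv 1\pmod{p_i}$ for all $i$, and $n\equiv r\pmod q$. For each such $n$, the slope $n$ is characterising by hypothesis (it is large), so same-slope rivals are excluded, and the above case analysis excludes rivals with $|q'|>1$; hence $n$ is strongly-characterising. This is essentially the same Chinese remainder theorem / Dirichlet argument already run in Section~\ref{section:hyperbolic_knots}, now with $q$ replaced by $1$ in the slope.

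The main obstacle is the case $|q'|>1$ but not large, say $2<|q'|\le C(K)$ or $|q'|=2$: the purely hyperbolic-geometry arguments (Lackenby's Theorem~\ref{thm:lackenby_char_slope}, Corollary~\ref{cor:hyp_Dehn_surgery}) do not apply, so one genuinely needs the linking-form residue obstruction to kill these slopes, and one must be careful that the constant $C(K)$ chosen in the arithmetic step dominates \emph{both} Lackenby's constant and the JSJ/hyperbolic constants from Section~\ref{section:general_knots} so that every $|q'|$ not killed arithmetically is large enough for the geometric input. The edge case $|q'|\le 2$ (where Lemma~\ref{lemma:hyp_reduction} and Theorem~\ref{thm:JSJ_filling} have hypotheses $|q'|>2$) also deserves a separate remark: $|q'|=1$ is the same-slope case covered by hypothesis (after matching $p=n$), and $|q'|=2$ forces $q'\equiv 2\pmod n$ to be a quadratic residue, which the choice $n\equiv 1\pmod 8$ \emph{permits}, so for $|q'|=2$ one should instead note that $K'(p/\pm2)\cong K(n)$ together with the cosmetic-surgery classification of \cite{Cosmetic_Daemi_Lidman_Eismeier} and $n$ being characterising, or simply impose one more congruence on $n$ making $2$ useless, forces $K'=K$; I would handle this by additionally arranging, if necessary, that the relevant small even denominators are excluded, or by observing directly that a knot $K'$ with $K'(p/2)$ hyperbolic and equal to $K(n)$ can be pushed through the $|q'|=2$ boundary case of the JSJ analysis.
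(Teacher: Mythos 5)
Your reduction to the case $1<|q'|$ via Lackenby's theorem and Lemma~\ref{lemma:hyp_reduction} matches the paper, but the step you rely on to kill the remaining range $1<|q'|<C(K)$ does not work. With $q=1$, Proposition~\ref{prop:linking_framing} only forces $q'$ to be a quadratic residue mod $p$; this can never exclude $q'=\pm 4$ (or $\pm 9$, or any product of two non-residues), since perfect squares are residues mod every $p$, and more generally the non-residues are not closed under multiplication, so one cannot arrange for all of $2,\dots,C(K)$ to be non-residues simultaneously. Worse, your own congruence conditions (all integers up to $C(K)$ are residues mod $n$) make the linking-form obstruction vacuous exactly where you need it. The auxiliary non-residue prime $q$ from Section~\ref{section:hyperbolic_knots} played the role of the numerator's denominator $q$ in $q\cdot q'\equiv m^2$; here that slot is occupied by $1$, a residue, and there is nothing left to twist against. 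The paper avoids this entirely with an elementary trick you are missing: take $p$ divisible by every integer less than $C(K)$, so that $\gcd(p,q')=1$ immediately forces $q'=\pm 1$. Note this is incompatible with your choice of $p=n$ prime.

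The second gap is the case $q'=-1$. You fold it into the ``same-slope case covered by hypothesis,'' but $K'(-p)\cong K(p)$ for $K'\neq K$ is not addressed by the assumption that $p$ is characterising (which concerns only the slope $+p$ on both sides), nor by the cosmetic surgery result of \cite{Cosmetic_Daemi_Lidman_Eismeier} (which concerns two surgeries on a single knot). This case requires a genuinely new input: the paper compares Casson--Walker invariants via the Boyer--Lines surgery formula and Rasmussen's lens space computation, obtaining $6(\Delta''_K(1)+\Delta''_{K'}(1))=p^2-3p+2$ and then choosing $p$ divisible by $3$ to get a contradiction. Without something of this kind (or another invariant distinguishing $K(p)$ from all $K'(-p)$), the proof is incomplete even after the $\gcd$ trick.
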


McCoy showed that for a hyperbolic L-space knot $K$, all sufficiently large integer slopes are characterising \cite[Theorem 3]{McCoy_char_hyp}. Hence Theorem \ref{thm:integer_L_space} follows from Proposition \ref{prop_integer_1}. 

\begin{prop}\label{prop_integer_2}
    Consider a hyperbolic knot $K\subset S^3$ such that every sufficiently large integer slope is characterising for $K$. Then for every sufficiently large integer slope $p$ we have $K(p)\neq K'(p/q')$ for $K'\neq K$ and $q'>0$.
\end{prop}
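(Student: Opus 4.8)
The plan is to fix a large integer $p$, suppose that $K(p)\cong K'(p/q')$ for some knot $K'\not\cong K$ and some $q'>0$, and derive a contradiction. The starting observation is that, since $K$ is hyperbolic, $K(p)$ is hyperbolic for all sufficiently large $p$ by Thurston's hyperbolic Dehn surgery theorem~\cite{Thurston_Dehn_Surgery}; moreover $\hat{L}_K(p/1)\to\infty$ as $p\to\infty$, so for $p$ large Theorem~\ref{thm:Dehn_surgery_bounds} applied to $S^3\setminus K$ shows that the core geodesic $\gamma_K$ of the filling is the unique geodesic in $K(p)$ of length below some $c(K)>0$ depending only on $K$, and that $K(p)\setminus\gamma_K\cong S^3\setminus K$.

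First, the case $q'=1$ is immediate: then $K(p)\cong K'(p)$, so the hypothesis that every sufficiently large integer slope is characterising for $K$ forces $K'\cong K$, contrary to assumption. Hence $q'\geq2$. Since $K'(p/q')\cong K(p)$ is hyperbolic, a minor variant of Lemma~\ref{lemma:hyp_reduction} (covering also the borderline value $q'=2$) reduces us to the situation $K(p)\cong\tilde K(p/\tilde q)$ with $\tilde K$ a hyperbolic knot and $\tilde q=q'w^{2}\geq2$ for some integer $w\geq1$. If $\tilde K\cong K$, then $K(p)\cong K(p/\tilde q)$ with the slopes $p$ and $p/\tilde q$ distinct, contradicting the cosmetic surgery theorem~\cite{Cosmetic_Daemi_Lidman_Eismeier} since $p$ is large; so $\tilde K\not\cong K$. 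If $\tilde q\geq C(K)$, the constant of Theorem~\ref{thm:lackenby_char_slope}, then applying that theorem to the diffeomorphism $K(p/1)\cong\tilde K(p/\tilde q)$ forces $\tilde q=1$, a contradiction.

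The remaining case $2\leq\tilde q<C(K)$ is the main obstacle: only finitely many denominators occur, but there is no a priori bound on $\tilde K$, and this is precisely the window in which neither Lackenby's theorem nor the normalised-length machinery of Corollary~\ref{cor:hyp_Dehn_surgery} applies, since the slope $p/1$ on $S^3\setminus K$ has tiny denominator. I would treat it by a geometric finiteness argument. Fix one such $\tilde q$ and suppose, for contradiction, that there are arbitrarily large integers $p$ together with hyperbolic knots $\tilde K\not\cong K$ with $\tilde K(p/\tilde q)\cong K(p)$. The core $\tilde\gamma$ of the $\tilde K$-filling is a knot in $K(p)$ with $K(p)\setminus\tilde\gamma\cong S^3\setminus\tilde K$; it is not isotopic to $\gamma_K$, for otherwise $S^3\setminus\tilde K\cong S^3\setminus K$ and $\tilde K\cong K$. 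Hence $\tilde\gamma$ is isotopic to a geodesic of $K(p)$ distinct from $\gamma_K$, so of length at least $c(K)$ and contained in the subset of $K(p)$ that converges geometrically to $S^3\setminus K$ as $p\to\infty$; the drilling estimates underlying Theorem~\ref{thm:Dehn_surgery_bounds} then bound $\vol(S^3\setminus\tilde K)=\vol(K(p)\setminus\tilde\gamma)$ independently of $p$. As only finitely many hyperbolic 3-manifolds have volume below a fixed bound, along a subsequence $\tilde K$ is a fixed knot $\tilde K_\infty\not\cong K$ with $\tilde K_\infty(p/\tilde q)\cong K(p)$ for infinitely many $p$; letting $p\to\infty$, the left-hand side converges geometrically to $S^3\setminus\tilde K_\infty$ and the right-hand side to $S^3\setminus K$, so $S^3\setminus\tilde K_\infty\cong S^3\setminus K$ and $\tilde K_\infty\cong K$, a contradiction. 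This exhausts all cases. The step I expect to be genuinely delicate is exactly this small-$\tilde q$ regime --- making the uniform volume bound on $S^3\setminus\tilde K$ precise, since there the usual normalised-length estimates are unavailable.
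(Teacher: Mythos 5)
Your reduction steps agree with the paper: the case $q'=1$ is dispatched by the characterising hypothesis, and Lemma~\ref{lemma:hyp_reduction} plus Theorem~\ref{thm:lackenby_char_slope} force $q'w^2<C(K)$, leaving only the window $1<q'<C(K)$. But in that window the paper abandons hyperbolic geometry entirely and instead compares Heegaard Floer $d$-invariants: by the Ni--Wu surgery formula (Theorem~\ref{thm:d_surgery}), $d(K(p),0)-d(p,1,0)=-2V_0(K)$ is bounded, whereas $d(p,q',j)-d(p,1,0)\le \frac{p}{4q'}-\frac{p}{4}+O(1)\to-\infty$ for any fixed $q'>1$, which is the contradiction. (This is also where the hypothesis $q'>0$ enters; your geometric scheme would not see that sign restriction, which is a warning that it proves too much.)

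Your replacement argument for this window has a genuine gap, in fact several. The pivotal claim ``only finitely many hyperbolic 3-manifolds have volume below a fixed bound'' is false: by J{\o}rgensen--Thurston, the infinitely many Dehn fillings of a single cusped manifold all have volume below that of the cusped manifold, and the same phenomenon produces infinitely many one-cusped manifolds of bounded volume. Finiteness requires a volume bound \emph{together with} a lower bound on the systole of $S^3\setminus\tilde K$, which you do not have (drilling $\tilde\gamma$ can create new short geodesics). The volume bound itself is also unjustified on two counts. First, you cannot conclude that the core $\tilde\gamma$ of the $\tilde K(p/\tilde q)$-filling is isotopic to a geodesic of $K(p)$: the normalised length of $p\mu+\tilde q\lambda$ on the cusp of $S^3\setminus\tilde K$ is controlled by the denominator $\tilde q$ (via Lemma~\ref{lemma:length_bound}), not by $p$, since one only knows $l(\mu)\le 6$ while $l(\lambda)$ is unbounded over the unknown knots $\tilde K$; so for $\tilde q$ as small as $2$ the Hodgson--Kerckhoff/Futer--Purcell--Schleimer machinery need not apply to that filling at all. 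Second, even granting $\tilde\gamma$ is a geodesic of length $\ge c(K)$, there is no upper bound on its length, and the drilling estimates you invoke control $\vol(M\setminus\gamma)$ only for \emph{short} $\gamma$; a long geodesic in $K(p)$ can have drilled complement of volume growing with $p$. You correctly identified the small-$\tilde q$ regime as the delicate one, but the proposed geometric finiteness argument does not close it; some non-geometric input (here, the $d$-invariant asymptotics) is what the paper uses.
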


For the proof of Proposition~\ref{prop_integer_1}, we will use the Casson--Walker invariant $\lambda(M^3)\in \mathbb Q$ for rational homology $3$-spheres $M$. For an introduction to the Casson--Walker invariant, we refer to \cite{Into_Casson_Walker}. The main requisite property of $\lambda$ will be the following result of Boyer and Lines.

\begin{thm} [Boyer--Lines \cite{Surgery_Formula_Casson_Walker}]\label{thm:lambda_surgery_formula} For every knot $K\subset S^3$ and slope $p/q\neq 0$, we have 
$$\lambda(K(p/q))=\lambda(L(p,q))+\frac{q}{2p}\Delta''_K(1),$$
 where $\Delta_K(t)$ is the Alexander polynomial of $K$ normalised to be symmetric and satisfy $\Delta_K(1)=1$.\qed
\end{thm}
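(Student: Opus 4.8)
The plan is to obtain the formula from Walker's representation-theoretic construction of $\lambda$ together with Casson's computation of the basic surgery coefficient, and then to propagate the resulting integral formula to all rational slopes by a continued-fraction argument, with Dedekind-sum reciprocity assembling the lens-space baseline. An equally viable route, sketched at the end, is purely axiomatic.

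First I would recall that $\lambda$ arises from a signed, $|H_1|$-weighted count of irreducible $SU(2)$-representations of $\pi_1$, so that it is additive under connected sum, vanishes on $S^3$, and carries a \emph{cut-and-paste} behavior along an embedded torus. Writing $K(p/q) = (S^3 \setminus N(K)) \cup_\phi V$, where $V$ is the filling solid torus and $\phi$ sends the meridian of $V$ to $p\mu + q\lambda$, the character variety of $\partial N(K)$ (the ``pillowcase'') receives the restriction maps of representations of the two pieces, and $\lambda$ is computed by their intersection number. As the slope $p/q$ varies, this intersection number changes by a controlled wall-crossing term.

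Second, I would establish the basic integral surgery formula
$$\lambda(K(1/(n+1))) - \lambda(K(1/n)) = \tfrac{1}{2}\Delta''_K(1),$$
which is Casson's computation: the signed number of irreducible representations created by a unit Dehn twist is read off from the Reidemeister torsion of the infinite cyclic cover of $S^3 \setminus K$, that is, from $\Delta_K(t)$, and the second derivative $\Delta''_K(1)$ emerges because the relevant quantity is the $t=1$ Taylor coefficient of the normalized torsion. Since $K(1/n)$ is an integral homology sphere and $L(1,n) = S^3$, this already verifies the theorem on the line $p = 1$. The unknot case $K = U$, where $\Delta_U \equiv 1$ (so $\Delta''_U(1) = 0$) and $U(p/q) = L(p,q)$, simultaneously pins down the lens-space normalization.

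Third, I would extend to arbitrary $p/q$. Expanding $p/q$ as a continued fraction realizes $K(p/q)$ as the end of a linear chain of integral Rolfsen twists and blow-ups; iterating the basic relation and tracking how the Seifert longitude pairs with the filling slope forces the rational coefficient $q/(2p)$ of $\Delta''_K(1)$, while the accumulated contributions of the intermediate lens-space pieces collapse, via the reciprocity law for Dedekind sums, into exactly $\lambda(L(p,q))$. The main obstacle is precisely this bookkeeping: proving that the non-knot contributions assemble \emph{exactly} into $\lambda(L(p,q))$, rather than up to an undetermined additive constant, and that the $q/p$ scaling and the Dedekind sums emerge simultaneously and consistently. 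The genuinely deep input is Casson's identification of the wall-crossing number with $\tfrac{1}{2}\Delta''_K(1)$, which rests on the equivariant-signature/torsion computation. An alternative that bypasses the representation variety altogether is to verify directly that the claimed right-hand side satisfies Walker's characterizing axioms for $\lambda$ (its value on $S^3$, additivity under connected sum, and the unit-surgery relation), thereby reducing the whole statement to checking a short list of compatibilities.
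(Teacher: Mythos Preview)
The paper does not prove this theorem at all: it is stated as a known result of Boyer--Lines and closed with a \texttt{\textbackslash qed}, i.e.\ it is quoted from \cite{Surgery_Formula_Casson_Walker} and used as a black box. So there is no proof in the paper to compare your proposal against.

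That said, your sketch is a broadly faithful outline of how the result is obtained in the original literature (Casson's integral surgery formula, Walker's extension to rational homology spheres, and the Dedekind-sum bookkeeping for the lens-space term). It is, however, only an outline: the two places where real work hides are exactly the ones you flag, namely the identification of the wall-crossing term with $\tfrac{1}{2}\Delta_K''(1)$ and the verification that the continued-fraction contributions collapse to $\lambda(L(p,q))$ on the nose. If you were actually asked to supply a proof here, you would either need to carry out that bookkeeping explicitly or, more efficiently, point to the precise statements in Walker's monograph and the Boyer--Lines paper that do it. For the purposes of this paper, simply citing the result as the authors do is the appropriate move.
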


Let us remark that $\lambda(L(p,q))=-\lambda(L(p,-q))$. Moreover, we have the following result of Rasmussen.

 \begin{prop}[Rasmussen, Lemma 4.3 \cite{Lens_space_Casson_Walker}]\label{prop:lambda_lens} 
 \label{lambda_lens_formula}
     Let $a_i\geq 2$ such that
     $$\frac{p}{q}=a_1-\frac 1{a_2-\frac1{\ddots -\frac{1}{a_n}}}.$$
     Then we have
     $$\lambda(L(p,q))=-\frac{1}{24}\left(\frac qp+\frac rp+\sum_{i=1}^n(a_i-3)\right),$$
     where $0<r<p$ denotes the inverse of $q \pmod p$. \qed
 \end{prop}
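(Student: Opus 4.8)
The plan is to reduce the proposition to a purely number-theoretic identity about Dedekind sums, and then to prove that identity by induction on the length $n$ of the continued fraction. Recall the Dedekind sum $s(q,p)=\sum_{k=1}^{p-1}((k/p))((kq/p))$, where $((x))=x-\lfloor x\rfloor-\tfrac12$ for $x\notin\Z$ and $((x))=0$ otherwise. It is a classical computation of Walker (see \cite{Into_Casson_Walker}) that, in the normalisation fixed by the surgery formula of Theorem~\ref{thm:lambda_surgery_formula}, one has $\lambda(L(p,q))=-\tfrac12\,s(q,p)$. This is consistent with the stated symmetry $\lambda(L(p,q))=-\lambda(L(p,-q))$, since the sawtooth is odd and hence $s(-q,p)=-s(q,p)$. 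Taking this identification as input, the proposition becomes equivalent to the assertion that
\[
12\,s(q,p)=\frac{q+r}{p}+\sum_{i=1}^n(a_i-3),\qquad qr\equiv 1\pmod p,\ 0<r<p .
\]
(As a sanity check, $n=1$ forces $q=1$, $p=a_1$, $r=1$, and the classical value $s(1,p)=(p-1)(p-2)/(12p)$ gives $12\,s(1,p)=p-3+2/p$, matching the right-hand side; this is the base case of the induction.)

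For the inductive step I would write $p/q=a_1-q_1/p_1$, where $p_1/q_1$ is the shorter negative continued fraction $a_2-1/(a_3-\cdots-1/a_n)$. Clearing denominators in lowest terms yields $q=p_1$ and $p=a_1q-q_1$, so that $q/q_1$ is represented by the negative continued fraction $a_2-1/(\cdots-1/a_n)$, to which the inductive hypothesis applies. I would then feed $s(q,p)$ into the three structural properties of Dedekind sums: periodicity in the first argument modulo the second, oddness, and the reciprocity law $s(q,p)+s(p,q)=-\tfrac14+\tfrac{1}{12}\!\left(\tfrac pq+\tfrac qp+\tfrac1{pq}\right)$. Since $p\equiv -q_1\pmod q$, periodicity and oddness give $s(p,q)=-s(q_1,q)$, so reciprocity becomes $12\,s(q,p)=-3+\tfrac pq+\tfrac qp+\tfrac1{pq}+12\,s(q_1,q)$.

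Substituting the inductive hypothesis $12\,s(q_1,q)=\frac{q_1+r_1}{q}+\sum_{i=2}^n(a_i-3)$, with $q_1r_1\equiv 1\pmod q$ and $0<r_1<q$, and using $\tfrac pq=a_1-\tfrac{q_1}{q}$, the term $a_1$ and the $\pm q_1/q$ contributions cancel, and after multiplying through by $pq$ the entire claim collapses to the single arithmetic requirement $qr=1+p\,r_1$. This is exactly what we need: taking the displayed relation modulo $p$ shows $r$ is the inverse of $q$, taking it modulo $q$ recovers $q_1 r_1\equiv 1\pmod q$ (using $p\equiv -q_1$), and the bounds $1\le r_1\le q-1$ force $0<(1+p\,r_1)/q<p$, so that $r=(1+p\,r_1)/q$ is precisely the inverse of $q$ modulo $p$ in the required range. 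This closes the induction.

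The main obstacle is not the topology but this arithmetic bookkeeping: one must verify carefully that the integer produced by the recursion is exactly the reduced inverse $r$, i.e.\ that the quotient $k$ in $qr=1+kp$ equals the inverse $r_1$ of $q_1$ modulo $q$, and that all range conditions $0<r<p$, $0<r_1<q$ propagate correctly through the continued-fraction reduction. A secondary point requiring care is tracking the normalisation constants so that the $-\tfrac14$ and $\tfrac{1}{12}$ of reciprocity, together with the factor $-\tfrac12$ from the identification $\lambda(L(p,q))=-\tfrac12 s(q,p)$, combine to the global prefactor $-\tfrac1{24}$. One could alternatively attempt a direct induction on lens spaces via a Casson--Walker surgery formula for the knot realising the continued-fraction step, but that would require a surgery formula more general than Theorem~\ref{thm:lambda_surgery_formula}; the Dedekind-sum route keeps the argument within elementary reciprocity.
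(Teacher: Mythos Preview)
The paper does not prove this proposition: it is quoted from Rasmussen and closed with a \qed, so there is no ``paper's own proof'' to compare against. Your argument is therefore supplying what the paper merely cites.

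Your route is correct and is in fact the standard one. The identification $\lambda(L(p,q))=-\tfrac12\,s(q,p)$ in the normalisation where $\lambda(L(3,1))=-1/36$ is exactly right (and matches the paper's Remark following the proposition), and the induction on $n$ via Dedekind reciprocity goes through as you describe. The only place worth tightening is the phrasing of the key arithmetic check: you want to \emph{define} $r:=(1+p\,r_1)/q$ and then verify (i) it is an integer, since $1+p\,r_1\equiv 1-q_1r_1\equiv 0\pmod q$ by the inductive hypothesis and $p\equiv -q_1\pmod q$; (ii) $qr\equiv 1\pmod p$; (iii) $0<r<p$ from $1\le r_1\le q-1$. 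As written you present this slightly in reverse (``taking the displayed relation modulo $q$ recovers $q_1r_1\equiv 1$''), which reads like a consistency check rather than the forward implication you actually need; the content is the same, but the direction of the argument should be stated as above.
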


 \begin{rem}
    In this article the convention is that $L(p,q)$ denotes the $p/q$ surgery along the unknot and that $\lambda(L(3,1))=-1/36$. However, the reader should be aware that the convention chosen in \cite{Lens_space_Casson_Walker} is that $L(p,q)$ denotes the $-p/q$ surgery along the unknot and $\lambda(L(3,1))=-1/18$. 
 \end{rem}

\begin{proof} [Proof of Proposition~\ref{prop_integer_1}]
Take $p>1$ large enough such that $K(p)$ is hyperbolic and suppose $K(p)\cong K'(p/q')$ for $K'\neq K$. If $|q'|>2$, then by Lemma~\ref{lemma:hyp_reduction}, we have $K(p/q)\cong  K''(p/(q'\cdot w^2))$ for some hyperbolic knot $K''\subset S^3$ and some $w\in \mathbb N$. Moreover, if $K'\neq K''$ we may take $w>1$. Recall that by Theorem~\ref{thm:lackenby_char_slope}, there is a constant $C(K)$, depending only on $K$, such that if $|q'w^2|\geq C(K)$ then $K=K''$ and $p=p/(q'w^2)$. Hence, we know that $|q'|<C(K)$. Take $p$ to be divisible by all integers less than $C(K)$. Since $\gcd(p, q')=1$, we see that $q'=\pm 1$. Since a sufficiently large $p$ is characterising for $K$, we may assume $q'=-1$. \\

\noindent
If $K(p)\cong  K'(-p)$, Theorem~\ref{thm:lambda_surgery_formula} shows that
\begin{equation*}
    \lambda(L(p,1))+\frac{1}{2p}\Delta''_K(1)=\lambda(L(p, -1))-\frac{1}{2p}\Delta''_{K'}(1).
\end{equation*}
Plugging in the values from Proposition~\ref{prop:lambda_lens} yields
\begin{equation*}
\frac{\Delta''_K(1)+\Delta''_{K'}(1)}{2p}=\frac{2}{24} \left(\frac{1}{p}+\frac{1}{p}+(p-3)\right),
\end{equation*}
which simplifies to
\begin{equation*}
6\big(\Delta''_K(1)+\Delta''_{K'}(1)\big) =p^2-3p+2.
\end{equation*}
This is impossible for $p$ divisible by $3$. Hence, for $p$ sufficiently large and divisible by sufficiently many integers, the slope $p$ is strongly-characterising for $K$.
\end{proof}

For the proof of Proposition~\ref{prop_integer_2} we will need the Heegaard Floer $d$-invariant, which associates to a $\mathbb Q$-homology sphere $M^3$ and $\mathfrak{s}\in \text{Spin}^c(M^3)$ the rational number $d(M^3, \mathfrak{s})$. For an introduction to the d-invariant, we refer to \cite{OS_HF_Lectures}. In particular, a Dehn filling $K(p/q)$ induces a preferred identification of $\{0, \ldots, p-1\}$ with $\text{Spin}^c(K(p/q))$. The following result allows us to compute $d(K(p/q), \mathfrak{s})$.

\begin{thm}[Ni--Wu, Proposition 1.6 \cite{Ni_Cosmetic}]\label{thm:d_surgery} Consider a knot $K\subset S^3$, a slope $p/q>0$, and $\mathfrak{s}\in \text{Spin}^c(M)$ corresponding to $i\in \{0, \ldots, p-1\}$, then 
$$d(K(p/q), i)=d(p,q,i)-2\max\left\{V_{\lfloor \frac i q\rfloor}(K), V_{\lceil \frac{p-i}q\rceil}(K)\right\},$$
for $V_j(K)\geq 0$ a non-increasing sequence of integers only depending on $K$. Moreover, we can recursively compute
$$d(p,q,i)=-\frac 14 +\frac{(p+q-1-2i)^2}{4pq}-d(q,r,i'),$$
where $q\equiv r\pmod p, i\equiv i'\pmod q,$ and $d(1,0,0)=d(S^3)=0$. \qed
\end{thm}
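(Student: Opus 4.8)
The plan is to prove the formula via the Ozsv\'ath--Szab\'o mapping cone for rational surgeries, which expresses $HF^+(K(p/q),i)$ as the homology of an explicit complex built from the knot Floer homology of $K$. Recall from $CFK^\infty(K)$ the subquotient complexes $\mathfrak{A}^+_s=C\{\max(i,j-s)\ge 0\}$ with homology $A^+_s$, and $\mathfrak{B}^+=C\{i\ge0\}$ with homology $B^+\cong\mathcal{T}^+=\mathbb{F}[U,U^{-1}]/U\mathbb{F}[U]$. The vertical projection $v_s\colon A^+_s\to B^+$ and the horizontal projection $h_s\colon A^+_s\to B^+$ act on the bottom $\mathcal{T}^+$-tower as multiplication by $U^{V_s}$ and $U^{H_s}$ respectively; this is the definition of the integers $V_s,H_s\ge 0$. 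First I would record the standard structural facts: $V_s$ is non-increasing with $V_s=0$ for $s\ge g(K)$, the symmetry $H_s=V_{-s}$ coming from the symmetry of $CFK^\infty$, and the monotonicity $V_s-V_{s+1}\in\{0,1\}$. These facts, together with the elementary index identity $-\lfloor(i-p)/q\rfloor=\lceil(p-i)/q\rceil$, are what ultimately produce the two $V$-terms in the statement.

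Next I would invoke the rational surgery formula: $HF^+(K(p/q),i)$ is the homology of the mapping cone $\mathbb{X}^+_i$ of
$$D^+\colon\bigoplus_{s\equiv i\,(p)}\mathfrak{A}^+_{\lfloor s/q\rfloor}\longrightarrow\bigoplus_{s\equiv i\,(p)}\mathfrak{B}^+,$$
where the $s$-summand of $\mathfrak{A}$ maps by $v_{\lfloor s/q\rfloor}$ into the $s$-summand of $\mathfrak{B}$ and by $h_{\lfloor s/q\rfloor}$ into the $(s+p)$-summand. Since $K(p/q)$ is a rational homology sphere, $HF^\infty(K(p/q),i)\cong\mathbb{F}[U,U^{-1}]$ is a single tower, and $d(K(p/q),i)$ is by definition the minimal grading of a nonzero element in the image of the localization map $HF^\infty\to HF^+$ along that tower. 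The entire computation reduces to locating this bottom element inside $\mathbb{X}^+_i$.

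The key simplification is truncation: since $A^+_s\cong B^+$ with $v_s$ an isomorphism for $s\ge g(K)$, and $h_s$ an isomorphism for $s\le -g(K)$, the corresponding summands of $\mathbb{X}^+_i$ cancel, replacing it by a finite zig-zag without changing the single tower. I would first run this for the unknot, where all $V_s=H_s=0$, each $A^+_s\cong B^+$, and the truncated cone computes $HF^+(L(p,q),i)$; matching the bottom grading against the known correction terms of lens spaces identifies $d(p,q,i)=d(L(p,q),i)$, and the displayed recursion is exactly the Ozsv\'ath--Szab\'o continued-fraction recursion for lens-space $d$-invariants, which descends along the Euclidean algorithm. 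For general $K$, the two surviving outer vertices are $\mathfrak{A}^+_{\lfloor i/q\rfloor}$ (reached by the vertical map) and $\mathfrak{A}^+_{\lfloor(i-p)/q\rfloor}$ (reached by the horizontal map), and both map into the single $\mathfrak{B}^+$-summand at position $i$, acting on its tower as $U^{V_{\lfloor i/q\rfloor}}$ and $U^{H_{\lfloor(i-p)/q\rfloor}}$. Using $H_{\lfloor(i-p)/q\rfloor}=V_{-\lfloor(i-p)/q\rfloor}=V_{\lceil(p-i)/q\rceil}$, the bottom of the surviving tower is pushed up in grading by twice the larger of these two exponents relative to the unknot case, yielding the correction $-2\max\{V_{\lfloor i/q\rfloor},V_{\lceil(p-i)/q\rceil}\}$.

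I expect the main obstacle to be precisely this last localization-and-bookkeeping step: proving rigorously that the minimal-grading representative of the tower concentrates at the $\mathfrak{B}^+$-summand at position $i$ (rather than being smeared along the zig-zag), and that the two maps combine to contribute the \emph{maximum}, not the sum, of the two $U$-exponents. This is where the structural inequalities $V_s\ge V_{s+1}\ge V_s-1$ and $H_s=V_{-s}$ are essential: they make the grading shifts along the zig-zag unimodal, so a single vertex dominates and the competition between the vertical and horizontal maps at position $i$ resolves to a maximum. The remaining ingredients---the explicit grading-shift formulas in the mapping cone, the acyclicity of the truncated tails, and the lens-space recursion---are routine once this is in place.
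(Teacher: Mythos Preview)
The paper does not give its own proof of this statement: it is quoted verbatim as Proposition~1.6 of Ni--Wu and closed with a \qed, so there is nothing in the paper to compare your argument against. Your outline is the correct and standard route---the Ozsv\'ath--Szab\'o rational-surgery mapping cone, truncation, comparison with the unknot to identify $d(p,q,i)=d(L(p,q),i)$, and the symmetry $H_s=V_{-s}$ together with $-\lfloor(i-p)/q\rfloor=\lceil(p-i)/q\rceil$ to convert the horizontal exponent into a $V$-term---and it is essentially how Ni--Wu establish the result (with the lens-space recursion coming from Ozsv\'ath--Szab\'o's original computation of correction terms for $L(p,q)$). Your description of the truncated complex as having only ``two surviving outer vertices'' is a slight oversimplification---in general several $\mathfrak{A}$- and $\mathfrak{B}$-summands remain---but your identification of the genuine issue is right: the nontrivial step is showing that the bottom of the tower is supported at a single $\mathfrak{B}$-summand and that the grading shift there is governed by the \emph{maximum} of the two incoming $U$-exponents, which is exactly what the monotonicity of the $V_s$ forces.
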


\begin{proof} [Proof of Proposition~\ref{prop_integer_2}]
Suppose that for arbitrarily large $p$ there exists a knot $K'\neq K$ and $q'>0$ such that $K(p)\cong  K'(p/q')$. Take such a $p>1$ large enough so that $K(p)$ is hyperbolic and suppose $K(p)\cong  K'(p/q')$ for $K'\neq K$. If $|q'|>2$, then Lemma~\ref{lemma:hyp_reduction} implies $K(p/q)\cong  K''(p/(q'\cdot w^2))$ for some hyperbolic knot $K''\subset S^3$ and some $w\in \mathbb N$. Moreover, if $K'\neq K''$ we may take $w>1$. Recall that by Theorem~\ref{thm:lackenby_char_slope}, there is a constant $C(K)$, depending only on $K$, such that if $|q'w^2|\geq C(K)$ then $K=K''$ and $p=p/(q'w^2)$. Hence, we know that $0<q'<C(K)$. Moreover, since $p$ is characterising for $p$ sufficiently large, we may assume that $1<q'<C(K)$.\\

\noindent
Recall that the Dehn filling $K(p)$ gives an identification $\{0, \ldots, p-1\}\leftrightarrow\text{Spin}^c(K(p))$. Let $\mathfrak{s}\in \text{Spin}^c(K(p))$ correspond to $0$ under this identification. Then Theorem~\ref{thm:d_surgery} yields $$d(K(p), \mathfrak{s})=d(p,1,0)-2\max\left\{V_{0}(K), V_{p}(K)\right\}=d(p,1,0)-2V_0(K).$$
Let $j\in \{0, \ldots, p-1\}$ correspond to $\mathfrak{s}$ under the identification $\{0, \ldots, p-1\}\leftrightarrow \text{Spin}^c(K'(p/q'))\cong  \text{Spin}^c(K(p))$ given by the Dehn filling $K'(p/q')$. Thus by Theorem~\ref{thm:d_surgery} we have that
$$d(K'(p/q'), \mathfrak{s})=d(p,q',j)-2\max\left\{V_{\lfloor \frac j {q'}\rfloor}(K'), V_{\lceil \frac{p-j}{q'}\rceil}(K')\right\}.$$
Comparing these expressions for the $d$-invariants of $K(p)\cong  K'(p/q')$ we obtain
$$
2\max\left\{V_{\lfloor \frac j {q'}\rfloor}(K'), V_{\lceil \frac{p-j}{q'}\rceil}(K')\right\}-2V_0(K)=d(p, q', j)-d(p, 1, 0).
$$ 
Recall that the $V_i(K')$ are always non-negative. Hence for arbitrarily large $p$, we have found $j\in \{0, \ldots, p-1\}$ and $1<q'<C(K)$ such that $d(p, q', j)-d(p, 1, 0)>-2V_0(K)$. On the other hand, by Theorem~\ref{thm:d_surgery} we deduce that for $r'\equiv q'\pmod p$ and $j'\equiv j\pmod {q'}$ we have
\begin{eqnarray*}
d(p, q', j)-d(p, 1, 0)&=&\frac{(p+q'-1-2j)^2}{4pq'}-\frac{p^2}{4p}-d(q', r', j')+d(1, 1, 0)\\
&\leq & \frac{(p+q'-1)^2}{4pq'}-\frac{p}{4}-d(q', r', j')+d(1, 1, 0)\\
&\leq & \frac{p}{4q'}-\frac{p}{4}+\frac{(q'-1)}{2q'}+\frac{(q'-1)^2}{4pq'}-d(q', r', j')+d(1, 1, 0).
\end{eqnarray*}
By construction $r', j'\leq q'$, and $1<q'< C(K)$. Hence $$\left|\frac{q'-1}{2q'}+\frac{(q'-1)^2}{4pq'}-d(q',r', j')+d(1,1,0)\right|$$ is bounded by a constant depending only on $K$. Moreover, since $q'>1$ we see that $\frac{p}{4q'}-\frac{p}{4}\to -\infty$ as $p\to \infty$. Hence, for sufficiently large $p$ and any $j\in \{0, \ldots, p-1\}$ we see that $d(p, q', k)-d(p, 1, 0)\leq -2V_0(K)$, a contradiction. 
\end{proof}

 \let\MRhref\undefined
 \bibliographystyle{hamsalpha}
 \bibliography{lit.bib}

\providecommand{\bysame}{\leavevmode\hbox to3em{\hrulefill}\thinspace}
\providecommand{\MR}{\relax\ifhmode\unskip\space\fi MR }
\providecommand{\MRhref}[2]{%
  \href{http://www.ams.org/mathscinet-getitem?mr=#1}{#2}
}
\providecommand{\href}[2]{#2}
\begin{thebibliography}{KMOS07}

\bibitem[Ago00]{Agol-6-Thm}
Ian Agol, \emph{Bounds on exceptional {D}ehn filling}, Geom. Topol. \textbf{4}
  (2000), 431--449. \MR{1799796}

\bibitem[AJLO15]{AJLO}
Tetsuya Abe, In~Dae Jong, John Luecke, and John Osoinach, \emph{Infinitely many
  knots admitting the same integer surgery and a four-dimensional extension},
  Int. Math. Res. Not. IMRN (2015), 11667--11693. \MR{3456699}

\bibitem[AJOT13]{AJLO2}
Tetsuya Abe, In~Dae Jong, Yuka Omae, and Masanori Takeuchi, \emph{Annulus twist
  and diffeomorphic 4-manifolds}, Math. Proc. Cambridge Philos. Soc.
  \textbf{155} (2013), 219--235. \MR{3091516}

\bibitem[Akb77]{Akbulut}
Selman Akbulut, \emph{On {$2$}-dimensional homology classes of
  {$4$}-manifolds}, Math. Proc. Cambridge Philos. Soc. \textbf{82} (1977),
  99--106. \MR{433476}

\bibitem[AT21]{Abe_Tagami}
Tetsuya Abe and Keiji Tagami, \emph{Knots with infinitely many
  non-characterizing slopes}, Kodai Math. J. \textbf{44} (2021), 395--421.
  \MR{4332684}

\bibitem[Ber18]{Berge}
John Berge, \emph{Some knots with surgeries yielding lens spaces}, 2018,
  \href{http://arxiv.org/abs/1802.09722}{arXiv:1802.09722}.

\bibitem[BKM24]{BKM_QA}
Kenneth~L. Baker, Marc Kegel, and Duncan McCoy\vspace{0mm},
  \emph{Quasi-alternating surgeries}, 2024,
  \href{http://arxiv.org/abs/2409.09839}{arXiv:2409.09839}.

\bibitem[BKM25]{traces}
Kenneth~L. Baker, Marc Kegel, and Kimihiko Motegi, \emph{Knots not detected by
  any trace}, 2025, \href{http://arxiv.org/abs/2503.14172}{arXiv:2503.14172}.

\bibitem[BL90]{Surgery_Formula_Casson_Walker}
Steven Boyer and Daniel Lines, \emph{Surgery formulae for {C}asson's invariant
  and extensions to homology lens spaces}, J. Reine Angew. Math. \textbf{405}
  (1990), 181--220. \MR{1041002}

\bibitem[BM18]{Baker_Motegi_non_char}
Kenneth~L. Baker and \vspace{0cm}Kimihiko Motegi, \emph{Non-characterizing
  slopes for hyperbolic knots}, Algebr. Geom. Topol. \textbf{18} (2018),
  1461--1480. \MR{3784010}

\bibitem[BM19]{Baker_Motegi_Alexander}
Kenneth~L. Baker and Kimihiko Motegi, \emph{Twist families of {L}-space knots,
  their genera, and {S}eifert surgeries}, Comm. Anal. Geom. \textbf{27} (2019),
  743--790. \MR{4025322}

\bibitem[Bra80]{Brakes_non_char}
W.~R. Brakes, \emph{Manifolds with multiple knot-surgery descriptions}, Math.
  Proc. Cambridge Philos. Soc. \textbf{87} (1980), 443--448. \MR{556924}

\bibitem[BS24a]{Baldwin_Sivek_0_Surgery}
John~A. Baldwin and Steven Sivek, \emph{Zero-surgery characterizes infinitely
  many knots}, Math. Res. Lett. \textbf{31} (2024), 1639--1653. \MR{4862352}

\bibitem[BS24b]{Baldwin_Sivek_char_5_2}
John~A. Baldwin and Steven Sivek\vspace{0cm}, \emph{Characterizing slopes for
  {$5_2$}}, J. Lond. Math. Soc. (2) \textbf{109} (2024). \MR{4760447}

\bibitem[BS25]{Baldwin_Sivek_traces}
\vspace{0cm}John~A. Baldwin and Steven Sivek, \emph{{L}-spaces and knot
  traces}, 2025, \href{http://arxiv.org/abs/2501.00914}{arXiv:2501.00914}.

\bibitem[Bud06]{Budney_knot_JSJ}
Ryan Budney, \emph{J{SJ}-decompositions of knot and link complements in
  {$S^3$}}, Enseign. Math. (2) \textbf{52} (2006), 319--359. \MR{2300613}

\bibitem[CDGW]{SnapPy}
Marc Culler, Nathan Dunfield, Matthias Goerner, and Jeffrey Weeks,
  \emph{Snappy, a computer program for studying the geometry and topology of
  $3$-manifolds}, \url{http://snappy.computop.org}.

\bibitem[CEK24]{Stein}
Roger Casals, John Etnyre, and Marc Kegel, \emph{Stein traces and
  characterizing slopes}, Math. Ann. \textbf{389} (2024), 1053--1098.
  \MR{4745731}

\bibitem[CFH16]{Linking_forms_alg_top}
Anthony Conway, Stefan Friedl, and Gerrit Herrmann, \emph{Linking forms
  revisited}, Pure Appl. Math. Q. \textbf{12} (2016), 493--515. \MR{3842851}

\bibitem[CL98]{Cooper_Lackenby_Dehn_Surgery}
Daryl Cooper and Marc Lackenby, \emph{Dehn surgery and negatively curved
  {$3$}-manifolds}, J. Differential Geom. \textbf{50} (1998), 591--624.
  \MR{1690741}

\bibitem[DEL25]{Cosmetic_Daemi_Lidman_Eismeier}
Aliakbar Daemi, Mike~Miller Eismeier, and Tye Lidman, \emph{Filtered instanton
  homology and cosmetic surgery}, 2025,
  \href{http://arxiv.org/abs/2410.21248}{arXiv:2410.21248}.

\bibitem[Edw64]{Edwards}
C.~H. Edwards, Jr., \emph{Concentricity in {$3$}-manifolds}, Trans. Amer. Math.
  Soc. \textbf{113} (1964), 406--423. \MR{178459}

\bibitem[FPS22]{Futer_Purcell_Schleimer_bound}
David Futer, Jessica Purcell, and Saul Schleimer, \emph{Effective bilipschitz
  bounds on drilling and filling}, Geom. Topol. \textbf{26} (2022), 1077--1188.
  \MR{4466646}

\bibitem[FRW24]{Genus_2_L_Space}
Ethan Farber, Braedan Reinoso, and Luya Wang, \emph{Fixed-point-free
  pseudo-anosov homeomorphisms, knot floer homology and the cinquefoil},
  Geometry \& Topology \textbf{28} (2024), no.~9, 4337–4381.

\bibitem[Gab87]{Gabai_Property_R}
David Gabai, \emph{Foliations and the topology of {$3$}-manifolds. {III}}, J.
  Differential Geom. \textbf{26} (1987), 479--536. \MR{910018}

\bibitem[Gau01]{Gauss_original}
Carl~Friedrich Gauss, \emph{Disquisitiones arithmeticae}, Gerhard Fleischer,
  1801.

\bibitem[Ghi08]{Ghighini_Poincare_Sphere}
Paolo Ghiggini, \emph{Knot {F}loer homology detects genus-one fibred knots},
  Amer. J. Math. \textbf{130} (2008), 1151--1169. \MR{2450204}

\bibitem[GL89]{Gordon_Luecke}
C.~McA. Gordon and J.~Luecke, \emph{Knots are determined by their complements},
  J. Amer. Math. Soc. \textbf{2} (1989), 371--415. \MR{965210}

\bibitem[GMT03]{Gabai_Meyerhoff_Thurston_Isotopy_Hyperbolic}
David Gabai, G.~Robert Meyerhoff, and Nathaniel Thurston, \emph{Homotopy
  hyperbolic 3-manifolds are hyperbolic}, Ann. of Math. (2) \textbf{157}
  (2003), 335--431. \MR{1973051}

\bibitem[Gor83]{Gordon_satellite}
Cameron.~McA. Gordon, \emph{Dehn surgery and satellite knots}, Trans. Amer.
  Math. Soc. \textbf{275} (1983), 687--708. \MR{682725}

\bibitem[Gre13]{Greene}
Joshua~Evan Greene, \emph{The lens space realization problem}, Ann. of Math.
  (2) \textbf{177} (2013), 449--511. \MR{3010805}

\bibitem[HK05]{HK_Angle_Annals}
Craig Hodgson and Steven Kerckhoff, \emph{Universal bounds for hyperbolic
  {D}ehn surgery}, Ann. of Math. (2) \textbf{162} (2005), 367--421.
  \MR{2178964}

\bibitem[HPW25]{Wakelin_Picirillo_Hayden_any_slope}
Kyle Hayden, Lisa Piccirillo, and Laura Wakelin, \emph{Dehn surgery functions
  are never injective}, 2025,
  \href{http://arxiv.org/abs/2508.13369}{arXiv:2508.13369}.

\bibitem[Joh79]{J}
Klaus Johannson, \emph{Homotopy equivalences of {$3$}-manifolds with
  boundaries}, Lecture Notes in Mathematics, vol. 761, Springer, Berlin, 1979.
  \MR{551744}

\bibitem[JS79]{JS}
William~H. Jaco and Peter~B. Shalen, \emph{Seifert fibered spaces in
  {$3$}-manifolds}, Mem. Amer. Math. Soc. \textbf{21} (1979), viii+192.
  \MR{539411}

\bibitem[KMOS07]{KMOS_unknot_char}
Peter Kronheimer, Tomasz Mrowka, Peter Ozsv\'ath, and Zolt\'an Szab\'o,
  \emph{Monopoles and lens space surgeries}, Ann. of Math. (2) \textbf{165}
  (2007), 457--546. \MR{2299739}

\bibitem[KMS92]{Motegi_twisting}
Masaharu Kouno, Kimihiko Motegi, and Tetsuo Shibuya, \emph{Twisting and knot
  types}, J. Math. Soc. Japan \textbf{44} (1992), 199--216. \MR{1154840}

\bibitem[KP25]{KP}
Marc Kegel and Lisa Piccirillo, \emph{Knots that share four surgeries}, 2025,
  \href{http://arxiv.org/abs/2505.13168}{arXiv:2505.13168}.

\bibitem[KS]{data}
Marc Kegel and Misha Schmalian, \emph{Code and data to accompany this paper},
  available as additional file at the arXiv version of this article.

\bibitem[Lac00]{Lackenby-6-Thm}
Marc Lackenby\vspace{0cm}, \emph{Word hyperbolic {D}ehn surgery}, Invent. Math.
  \textbf{140} (2000), 243--282. \MR{1756996}

\bibitem[Lac19]{Lackenby_char}
Marc Lackenby, \emph{Every knot has characterising slopes}, Math. Ann.
  \textbf{374} (2019), 429--446. \MR{3961316}

\bibitem[Lic62]{Lickorish}
\vspace{0cm}W. B.~Raymond Lickorish, \emph{A representation of orientable
  combinatorial {$3$}-manifolds}, Ann. of Math. (2) \textbf{76} (1962),
  531--540. \MR{151948}

\bibitem[Lic76]{Lickorish_sharing_surgery}
W.~B.~Raymond Lickorish, \emph{Surgery on knots}, Proc. Amer. Math. Soc.
  \textbf{60} (1976), 296--298 (1977). \MR{488060}

\bibitem[Lin44a]{Linnik_Dirichlet_2}
Yuri~V. Linnik, \emph{On the least prime in an arithmetic progression. {II}.
  {T}he {D}euring-{H}eilbronn phenomenon}, Rec. Math. [Mat. Sbornik] N.S.
  \textbf{15/57} (1944), 347--368. \MR{12112}

\bibitem[Lin44b]{Linnik_Dirichlet_1}
Yuri\vspace{0cm}~V. Linnik, \emph{On the least prime in an arithmetic
  progression. {I}. {T}he basic theorem}, Rec. Math. [Mat. Sbornik] N.S.
  \textbf{15/57} (1944), 139--178. \MR{12111}

\bibitem[Liv82]{Livingston_surgery}
Charles Livingston, \emph{More {$3$}-manifolds with multiple knot-surgery and
  branched-cover descriptions}, Math. Proc. Cambridge Philos. Soc. \textbf{91}
  (1982), 473--475. \MR{654093}

\bibitem[Mar23]{Martelli_book}
Bruno Martelli, \emph{An introduction to geometric topology}, 2023.

\bibitem[McC19]{McCoy_char_hyp}
\vspace{0cm}Duncan McCoy, \emph{On the characterising slopes of hyperbolic
  knots}, Math. Res. Lett. \textbf{26} (2019), 1517--1526. \MR{4049819}

\bibitem[McC20]{McCoy_char_torus}
Duncan McCoy, \emph{Non-integer characterizing slopes for torus knots}, Comm.
  Anal. Geom. \textbf{28} (2020), 1647--1682. \MR{4184829}

\bibitem[McC23]{McCoy_Pretzel}
Duncan McCoy\vspace{0cm}, \emph{Characterizing slopes for the {$(-2, 3,
  7)$}-pretzel knot}, Canad. Math. Bull. \textbf{66} (2023), 937--950.
  \MR{4651647}

\bibitem[McC25]{McCoy_char_Floer}
Duncan\vspace{0cm} McCoy, \emph{Non-integer characterizing slopes and knot
  {F}loer homology}, Int. Math. Res. Not. IMRN (2025), 1--27. \MR{4875644}

\bibitem[Mos68]{Mostow_rigidity}
George Mostow, \emph{Quasi-conformal mappings in {$n$}-space and the rigidity
  of hyperbolic space forms}, Inst. Hautes \'Etudes Sci. Publ. Math. (1968),
  no.~34, 53--104. \MR{236383}

\bibitem[Mos71]{Moser}
Louise Moser, \emph{Elementary surgery along a torus knot}, Pacific J. Math.
  \textbf{38} (1971), 737--745. \MR{383406}

\bibitem[Mot88]{Motegi_surgery}
Kimihiko Motegi, \emph{Homology {$3$}-spheres which are obtained by {D}ehn
  surgeries on knots}, Math. Ann. \textbf{281} (1988), 483--493. \MR{954154}

\bibitem[MP18]{MillerPiccirillo}
Allison~N. Miller and Lisa Piccirillo, \emph{Knot traces and concordance}, J.
  Topol. \textbf{11} (2018), 201--220. \MR{3784230}

\bibitem[MP23]{Manolescu_Piccirillo}
Ciprian Manolescu and Lisa Piccirillo, \emph{From zero surgeries to candidates
  for exotic definite 4-manifolds}, J. Lond. Math. Soc. (2) \textbf{108}
  (2023), 2001--2036. \MR{4668522}

\bibitem[NW15]{Ni_Cosmetic}
Yi~Ni and Zhongtao Wu, \emph{Cosmetic surgeries on knots in {$S^3$}}, J. Reine
  Angew. Math. \textbf{706} (2015), 1--17. \MR{3393360}

\bibitem[NZ14]{Ni_Zhang_Torus_Char}
Yi~Ni and Xingru Zhang, \emph{Characterizing slopes for torus knots}, Algebr.
  Geom. Topol. \textbf{14} (2014), 1249--1274. \MR{3190593}

\bibitem[OS06]{OS_HF_Lectures}
Peter Ozsv\'ath and Zolt\'an Szab\'o, \emph{Lectures on {H}eegaard {F}loer
  homology}, Floer homology, gauge theory, and low-dimensional topology, Clay
  Math. Proc., vol.~5, Amer. Math. Soc., Providence, RI, 2006, pp.~29--70.
  \MR{2249248}

\bibitem[OS19]{trefoil_eight_char}
Peter\vspace{0cm} Ozsv\'ath and Zolt\'an Szab\'o, \emph{The {D}ehn surgery
  characterization of the trefoil and the figure eight knot}, J. Symplectic
  Geom. \textbf{17} (2019), 251--265. \MR{3956375}

\bibitem[Oso06]{Osoinach}
John~K. Osoinach, Jr., \emph{Manifolds obtained by surgery on an infinite
  number of knots in {$S^3$}}, Topology \textbf{45} (2006), 725--733.
  \MR{2236375}

\bibitem[Per02]{Perelman1}
Grisha Perelman\vspace{0cm}, \emph{The entropy formula for the {R}icci flow and
  its geometric applications}, 2002,
  \href{http://arxiv.org/abs/math/0211159}{arXiv:math/0211159}.

\bibitem[Per03]{Perelman2}
Grisha Perelman, \emph{Ricci flow with surgery on three-manifolds}, 2003,
  \href{http://arxiv.org/abs/math/0303109}{arXiv:math/0303109}.

\bibitem[Pic19]{Piccirillo}
Lisa Piccirillo, \emph{Shake genus and slice genus}, Geom. Topol. \textbf{23}
  (2019), 2665--2684. \MR{4019900}

\bibitem[Ras04]{Lens_space_Casson_Walker}
Jacob Rasmussen, \emph{Lens space surgeries and a conjecture of {G}oda and
  {T}eragaito}, Geom. Topol. \textbf{8} (2004), 1013--1031. \MR{2087076}

\bibitem[Sch53]{Schubert_Genus}
Horst Schubert, \emph{Knoten und {V}ollringe}, Acta Mathematica \textbf{90}
  (1953), 131--286.

\bibitem[Sor24]{Sorya_satellite_char}
Patricia Sorya, \emph{Characterizing slopes for satellite knots}, Adv. Math.
  \textbf{450} (2024), Paper No. 109746, 35. \MR{4755445}

\bibitem[SW24]{Wakelin_Sorya_effective_char}
Patricia Sorya and Laura Wakelin, \emph{Effective bounds on characterising
  slopes for all knots}, 2024,
  \href{http://arxiv.org/abs/2410.24209}{arXiv:2410.24209}.

\bibitem[Tag24]{Tagami_RGB}
Keiji Tagami, \emph{On annulus presentations, dualizable patterns and
  {RGB}-diagrams}, J. Knot Theory Ramifications \textbf{33} (2024), Paper No.
  2497001, 23. \MR{4803982}

\bibitem[Ter07]{Teragaito}
Masakazu Teragaito, \emph{A {S}eifert fibered manifold with infinitely many
  knot-surgery descriptions}, Int. Math. Res. Not. IMRN (2007), Art. ID rnm
  028, 16. \MR{2347296}

\bibitem[Thu82a]{Thurston_Dehn_Surgery}
William Thurston, \emph{Three-dimensional manifolds, {K}leinian groups and
  hyperbolic geometry}, Bull. Amer. Math. Soc. (N.S.) \textbf{6} (1982),
  357--381. \MR{648524}

\bibitem[Thu82b]{Thurston}
William~P. Thurston, \emph{Three-dimensional manifolds, {K}leinian groups and
  hyperbolic geometry}, Bull. Amer. Math. Soc. (N.S.) \textbf{6} (1982),
  357--381. \MR{648524}

\bibitem[Tor53]{Torres}
Guillermo Torres, \emph{On the {A}lexander polynomial}, Ann. of Math. (2)
  \textbf{57} (1953), 57--89. \MR{52104}

\bibitem[Wak23]{Wakelin_char}
Laura Wakelin, \emph{The {D}ehn surgery characterisation of {W}hitehead
  doubles}, 2023, \href{http://arxiv.org/abs/2304.04349}{arXiv:2304.04349}.

\bibitem[Wal60]{Wallace}
Andrew~H. Wallace, \emph{Modifications and cobounding manifolds}, Canadian J.
  Math. \textbf{12} (1960), 503--528. \MR{125588}

\bibitem[Wal67]{Waldhausen_SFS}
Friedhelm Waldhausen, \emph{Eine {K}lasse von {$3$}-dimensionalen
  {M}annigfaltigkeiten. {I}, {II}}, Invent. Math. \textbf{3} (1967), 308--333;
  ibid. 4 (1967), 87--117. \MR{235576}

\bibitem[Wal92]{Into_Casson_Walker}
Kevin Walker, \emph{An extension of {C}asson's invariant}, Annals of
  Mathematics Studies, vol. 126, Princeton University Press, Princeton, NJ,
  1992. \MR{1154798}

\bibitem[Yas17]{Yasui}
Kouichi Yasui, \emph{Corks, exotic 4-manifolds and knot concordance}, 2017,
  \href{http://arxiv.org/abs/1505.02551}{arXiv:1505.02551}.

\end{thebibliography}

\end{document}